\documentclass{article}
\usepackage{graphicx} % Required for inserting images

\usepackage{geometry}
\usepackage{graphicx}
\usepackage{amsmath,amssymb,amsthm,amsfonts}
\usepackage[utf8]{inputenc}
\usepackage[greek,english]{babel}
\usepackage{enumerate}
\usepackage{hyperref}
\usepackage[active]{srcltx}
\numberwithin{equation}{section}
\usepackage[T1]{fontenc}
\usepackage{color}

\newtheorem{theo}{Theorem}

\newtheorem{defi}{Definition}[section]

\newtheorem{prop}{Proposition}[section]
\newtheorem{rmk}{Remark}[section]

\newcommand{\R}{\mathbb{R}}

\title{Traveling waves of a reaction-diffusion system with partially coupled diffusion}
\author{T. Giletti\footnote{Laboratoire de Math\'{e}matiques Blaise Pascal, UMR 6620, Universit\'{e} Clermont-Auvergne.}, H. Izuhara\footnote{Faculty of Engineering, University of Miyazaki.}, H. Monobe\footnote{Department of Mathematics, Graduate School of Science, Osaka Metropolitan University.}}
\date{}

\begin{document}

\maketitle 
\begin{center}
    \textit{This work is dedicated to Professor Hiroshi Matano, with the deepest gratitude for his continued mentorship and friendship.}
\end{center}
\begin{abstract}
	This work is devoted to the study of traveling wave solutions of reaction-diffusion systems, where the diffusion rate of~$v$, the second component, depends on~$u$, the first component. Such systems arise in prey-predator models, where the predator~$v$ is actively hunting its prey~$u$, or in epidemiological models where the disease induces erratic behavior, for example rabies. This results in a quasilinear coupling in the highest-order term of the equation for~$v$. From the theoretical point of view, we fully classify traveling wave solutions when the first component does not diffuse, in which case the problem can be reduced to a nonlinear scalar equation. The case where both components diffuse is investigated numerically. 
\end{abstract}

\section{Introduction} 

In this paper, we will be interested in traveling wave solutions of a class of two-component reaction-diffusion systems which include coupling in a diffusion term. More precisely, we consider
\begin{equation}\label{eq:cross_sys_zero}\tag{$P$}
	\left\{
	\begin{array}{l}
		\partial_t u =   -  \beta uv , \vspace{3pt} \\
		\partial_t v = \partial_x^2 ( g(u) v ) +  h (u)v ,
	\end{array}
	\right. \quad t >0, \ x \in \mathbb{R}.
\end{equation}
The particularity of this system and novelty of this work is the fact that the diffusion in the $v$-equation depends on the other component~$u$. Throughout this work, we will assume that the parameter~$\beta$ is a positive constant, and that $g,h : \mathbb{R} \to \mathbb{R}$
are smooth functions. 

If furthermore
$$g(u) = g_1 , \quad h (u) = \gamma \beta u  - \delta ,$$
where $g_1, \gamma , \delta$ are positive constants, then we recover a standard and well-studied model from epidemiology. In such context,~$u$ denotes the density of susceptible and~$v$ the density of infected. Parameters~$\beta$, $\gamma$ and~$\delta$ then stand, respectively, for the transmission rate, conversion rate, and infected death rate. Lastly,~$g_1$ is the motility rate of the infected, which can be related at the individual scale to a random Brownian type motion. This model has been proposed in~\cite{Kallen84,Kallen85} for rabies, due to the erratic behavior that the disease induces on the infected individuals. Indeed, in this model it is assumed that the susceptible healthy individuals do not move, or at least only on a longer time scale or on a smaller spatial scale than the infected. This is consistent with observations and explained in~\cite{Murray86}. However, such an assumption seems reasonable in a wider range of models, as was already pointed out in~\cite{Britton91}. One may for instance think of fungal diseases or tree affecting pests, in which case, the function~$u$ would refer to the density of (mostly motionless) trees, and~$v$ to the density of fungus, bacteria or vector carrying the disease. 

Beyond epidemiology, one may also have in mind ``prey-predator'' models involving for instance an unmoving vegetal species (the prey) together with a moving animal herbivorous species (which we still refer to as the predator). We also point out that~\eqref{eq:cross_sys_zero} does not include any growth or death term in the equation for~$u$ apart from the coupled term. As an epidemiological model, this means that the natural life cycle of the susceptible should be much longer than the dynamics of the disease, i.e. transmission, incubation and death/recovery rates. This is again consistent with many of the aforementioned applications.

When the function~$g$ is not constant, this system exhibits a structure reminiscent of cross-diffusion, in the sense that the diffusion of one of the components depends non-trivially on the other. We will see later on that, thanks to the fact that the other species does not diffuse, system~\eqref{eq:cross_sys_zero} can be reduced to a scalar equation with nonlinear diffusion on the accumulated density of infected. From the modeling point of view, this additional coupling term in the diffusive part means that the infected/predator individuals move differently depending on the density of susceptible/prey. Typically we will assume that $g$ is nonincreasing, which means that the infected/predator moves more in the absence of susceptible/prey. In other words, the former actively seek, consciously or not, the latter. This especially makes sense in a prey-predator context, in which case~$v$ would indeed be predating on~$u$. Still, it may also be applicable e.g. in the modeling of rabies where the restlessness of infected individuals may translate as an increased motility mostly in the absence of other individuals, but rather in a more aggressive behavior in the presence of other individuals. A typical example we will consider is
\begin{equation}\label{def:example1}
	g(u) = g_1 - g_2u ,\qquad h(u) = \gamma \beta u - \delta,
\end{equation}
where $g_1,g_2$ are positive constants, which is a natural generalization of the rabies model with constant diffusion considered in the aforementioned literature. While it seems less relevant to the applications presented here, as far as the mathematical analysis is concerned, our methods still apply to the case when $g$ is not nonincreasing.

Finally, there is a large literature on the mathematical study of cross-diffusion systems and their applications in biology, originating in particular from the seminal work of Shigesada-Kawasaki-Teramoto~\cite{SKT}. The property that $g(u)$ is nonincreasing is often referred to as density-suppressed motility, and it has been used in models describing bacterial colony patterns~\cite{JKW}. Furthermore, in contrast to the SKT model, the nonincreasing nature of $g(u)$ represents movement toward the attractant $u$, and hence such terms have been employed in aggregation models~\cite{FIMU}. However, most of the literature seems to be devoted to bounded domains: well-posedness and convergence to equilibria~\cite{Desvillettes14,Desvillettes15,Lepoutre}, pattern formation~\cite{Lepoutre_pattern,IM2008,LOU199679}. In contrast, as far as we know, there seems to be few studies on cross-diffusion systems in the context of spatial ecological or epidemiological invasions in an unbounded domain. There are works on the existence of traveling waves for Keller-Segel systems from chemotaxis~\cite{Calvez,KS,Lui,Nadin}, where coupling typically occurs in lower order terms. Still, we refer in particular to~\cite{KMY} for a system quite similar to ours, though its authors show the existence of traveling waves by an approach (ODE techniques and center manifold theory) very different from ours. Therefore, we believe that even the simple situation considered here may significantly improve our understanding of how coupling in the diffusion may impact the propagating properties of solutions.\\

Let us now turn to the statements of our main results. The first one is concerned with the well-posedness of the Cauchy problem associated with~\eqref{eq:cross_sys_zero}. It also includes some preliminary result on the large time behavior of solutions.
\begin{theo}\label{thm:cauchy}
	Let initial data $u_0,v_0$ be such that
	$$u_0, v_0 \geq 0, \quad u_0, v_0 \in C^{2,\alpha} (\mathbb{R};\mathbb{R}) \cap W^{2,\infty} (\mathbb{R};\mathbb{R}),$$
	for some $\alpha \in (0,1)$. Assume also that
	\begin{equation}\label{eq:ass_g} 
		\min_{0 \leq u \leq  \sup u_0} g  (u) >0 .
	\end{equation}
	Then the Cauchy problem associated with~\eqref{eq:cross_sys_zero} and initial data $u_0, v_0$ admits a unique classical, nonnegative and global-in-time solution~$(u,v)$.
	
    Furthermore, if $h(0) < 0$,
	then this solution satisfies
	\begin{align*}
		\lim_{t \to \infty}v(t,x)=0 \quad \mbox{in} \ \mathbb{R},  \quad 
		\lim_{t \to \infty}u(t,x)
		\begin{cases}
			>0 & {\rm if} \ x \in \{x \ |\ u_0(x) >0 \}, \\
			=0 & {\rm otherwise}.
		\end{cases}
	\end{align*}
\end{theo}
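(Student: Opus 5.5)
The first equation can be integrated explicitly: $u(t,x) = u_0(x)\exp\bigl(-\beta w(t,x)\bigr)$, where $w(t,x) := \int_0^t v(s,x)\,ds$ is the accumulated density mentioned in the introduction. Integrating the $v$-equation in time then gives
\begin{equation*}
\partial_t w = v_0 + \partial_x^2 \bigl( G(x,w)\bigr) + H(x,w),
\end{equation*}
where $G(x,w) := \int_0^w g\bigl(u_0(x)e^{-\beta s}\bigr)\,ds$ and $H(x,w) := \int_0^w h\bigl(u_0(x)e^{-\beta s}\bigr)\,ds$. This uses $\partial_s\bigl(g(u)\bigr)v = \partial_s G$ and the analogous identity for $h$, with $w(0,\cdot)=0$. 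Writing $\partial_x^2 G = \partial_w G\, \partial_x^2 w + (\text{lower order in } w)$ with $\partial_w G = g(u_0 e^{-\beta w}) = g(u)$, the equation is a scalar quasilinear parabolic equation. Since $u\in[0,\sup u_0]$ as long as $w\ge 0$, assumption~\eqref{eq:ass_g} makes it uniformly parabolic.

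\textbf{Well-posedness.} I would not work on $w$ directly. Instead I would set up a fixed point on $v$. For given $u$, the equation $\partial_t v = \partial_x^2(g(u)v)+h(u)v = g(u)\partial_x^2 v + 2\partial_x g(u)\,\partial_x v + (\partial_x^2 g(u) + h(u))v$ is linear, uniformly parabolic, and has Hölder coefficients if $u\in C^{2,\alpha}$ in $x$. Conversely, $u = u_0 e^{-\beta\int_0^t v}$ inherits $C^{2,\alpha}$ regularity in $x$ from $u_0$ and $v$. Schauder estimates on $\mathbb{R}$ then give local existence and uniqueness by contraction on a short interval. Nonnegativity of $v$ follows from the maximum principle for the linear equation, and $0\le u\le u_0$ follows from the exponential formula.

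For global existence one needs a priori bounds preventing blow-up of $\|v\|_{C^{2,\alpha}}$. Setting $z = g(u)v$ gives $\partial_t z = g(u)\partial_x^2 z + \bigl(h(u) + g'(u)\partial_t u/g(u)\bigr) z$, and $\partial_t u = -\beta u v$ is bounded by $z$ times a constant. This is slightly nonlinear, so I would instead bound $w$ by the comparison principle for the scalar $w$-equation. A supersolution $\bar w = C e^{Kt}$ works since $G$, $H$ grow at most linearly in $w$. A bound on $w$ then gives $u$ bounded below by $u_0e^{-\beta \bar w}$, and $\partial_x$-regularity of $u$ is controlled by that of $w$. Quasilinear parabolic theory (Ladyzhenskaya type Hölder estimates for $\partial_x w$, then Schauder) yields $C^{2,\alpha}$ bounds on $w$, hence on $v$, on every finite interval, which closes the continuation argument. \textbf{This step, obtaining gradient and Hölder bounds uniformly on finite time intervals on an unbounded domain, is the main technical obstacle.} I would handle it by localizing on unit intervals and using the $W^{2,\infty}$ assumption on the data.

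\textbf{Large time behavior when $h(0)<0$.} Since $u\le u_0\le \sup u_0$, the linear $v$-equation with $h(u)\le \max h$ gives at most exponential growth, which is insufficient alone. The key is to show $w(t,x)\to w_\infty(x)<\infty$. By continuity pick $\varepsilon>0$ with $h<h(0)/2<0$ on $[0,\varepsilon]$. Where $w$ becomes large, $u\le\varepsilon$ and the $w$-equation has $H(x,w)\le C + \tfrac{h(0)}{2}w$. I would build a bounded stationary (or time-independent) supersolution $\bar w(x)$ of the $w$-equation, a large constant with a correction absorbing $v_0$, using this negative linear drift at large $w$. Comparison then gives $w\le \bar w$ uniformly in $t$. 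Since $w$ is nondecreasing in $t$, it converges to a bounded $w_\infty$. Parabolic estimates give uniform-in-time regularity of $v = \partial_t w$, and integrability of $v$ in time then forces $v(t,x)\to 0$. Finally $u(t,x)\to u_0(x)e^{-\beta w_\infty(x)}$, which is positive exactly where $u_0>0$ because $w_\infty$ is finite, and zero otherwise.
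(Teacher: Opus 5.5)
Your proposal is correct. The reduction to the scalar equation for $w=\int_0^t v$ and the large-time argument match the paper's proof: a spatially constant supersolution built from $h(0)<0$, monotonicity of $w$ in $t$, convergence to a bounded $w_\infty$, parabolic estimates giving $v=\partial_t w\to 0$, and $u\to u_0e^{-\beta w_\infty}$.

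The difference is in well-posedness. The paper does no fixed point. It solves the quasilinear problem for $w$ with $w(0)=0$ globally in one step by Ladyzhenskaya-type theory. It then sets $u:=u_0e^{-\beta w}$, which lies in $[0,\sup u_0]$ and is $C^{1+\alpha/2,2+\alpha}$ by Schauder. The $v$-equation thereby becomes linear and uniformly parabolic with H\"older coefficients, hence globally solvable, and the paper finally identifies $\partial_t w$ with $v$.

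You instead run a short-time contraction on $v$ and continue via a priori bounds: $w\le Ce^{Kt}$, then gradient, H\"older and Schauder estimates for $w$. The analytic input is the same. The estimates you single out as the main obstacle are exactly what the quoted global existence theorem for the scalar equation packages. So the paper's route is shorter and needs no continuation argument. It also gives uniqueness of classical solutions directly from uniqueness for the scalar problem, since any solution $(u,v)$ produces $w=\int_0^t v$ solving it. What your route buys is that $w=\int_0^t v$ and $u=u_0e^{-\beta w}$ hold by construction. In the paper, $\partial_t w\equiv v$ must be checked a posteriori: the $u$ entering the linear $v$-equation is built from $w$, so one must verify that $\partial_t w$ solves that linear equation with datum $v_0$.

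Three small corrections:
\begin{enumerate}
\item The identity behind the time integration is $g(u)v=\partial_s\bigl[G(x,w(s,x))\bigr]$, not $\partial_s(g(u))v=\partial_s G$.
\item For a constant $\bar w$, the term $\partial_x^2\bigl[G(x,\bar w)\bigr]$ does not vanish, because $G$ depends on $x$ through $u_0$. You must check that it is bounded uniformly in $\bar w$. This holds since $\partial_x^2\bigl[g(u_0(x)e^{-\beta s})\bigr]=O(e^{-\beta s})$, using $u_0\in W^{2,\infty}$; this is the paper's computation of $A(x,w,0)$. A plain large constant is then a supersolution, and no correction for the bounded $v_0$ is needed.
\item $C^{1+\alpha/2,2+\alpha}$ bounds on $w$ only give H\"older bounds on $v=\partial_t w$. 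The $C^{2,\alpha}$ control of $v(t,\cdot)$ needed to restart your contraction comes from one further linear Schauder estimate on the $v$-equation.
\end{enumerate}
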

	We point out that, by comparison and the nonpositivity of $-\beta u v$, one expects $u$ to never exceed its initial supremum. This justifies the hypothesis~\eqref{eq:ass_g}, which ensures that the equation for~$v$ remains parabolic, and in turn the well-posedness of the Cauchy problem.
	
	The second part of Theorem~\ref{thm:cauchy} shows that the susceptible/prey persists anywhere it was initially present, at least under the assumption that $h(0)<0$; the latter inequality simply means that the density of infected/predators decreases in the absence of susceptible/prey. We will come back below to the relation between the initial and final densities.\\

In the rest of this work we will be interested in the existence of traveling wave solutions for~\eqref{eq:cross_sys_zero}, that is of special solutions depending on a moving variable $x - ct$ where $c \in \mathbb{R}$. In the special case when
$$g(u) = g_1 , \quad h (u) = \gamma \beta u  - \delta,$$
which in particular implies that there is no coupling in the diffusion, the existence of traveling fronts was already established in~\cite{Kallen84}. More recently, this result was extended to the spatially periodic case in~\cite{DucrotGiletti}, where it was further proved that solutions of the Cauchy problem converge to a traveling wave for large classes of initial data. The proofs relied on the observation, from~\cite{Britton91}, that the time integral of~$v$, or in other words the spatial distribution of the cumulated density of infected, satisfies a scalar reaction-diffusion equation. We will make an extensive use of this fact, which remains true for the more general system~\eqref{eq:cross_sys_zero}, up to the new technical difficulties arising from the coupled diffusion.

Let us first introduce a more precise definition of traveling wave solutions. For an illustration, Figure~\ref{fig:1} shows a profile of a traveling wave solution of \eqref{eq:cross_sys_zero} obtained numerically.
\begin{defi}\label{defi_tw}
A traveling wave solution of~\eqref{eq:cross_sys_zero} is an entire solution (i.e. a solution defined for all $(t,x) \in \mathbb{R}^2$) of the form $(u,v) (t,x) = (U,V) (x-ct)$, where $c \in \mathbb{R}$ is the traveling wave speed, and $U,V>0$ satisfy
$$(U,V) (+\infty) = (U_{initial},0), \qquad (U,V) (-\infty) = (U_{final}, 0),$$
for some constants $0 < U_{final} < U_{initial} < + \infty$. We will refer to $U_{initial}$ and $U_{final}$ as respectively the right and left limits.
\end{defi}

\begin{figure}[htbp]
    \centering
    \includegraphics[width=80mm]{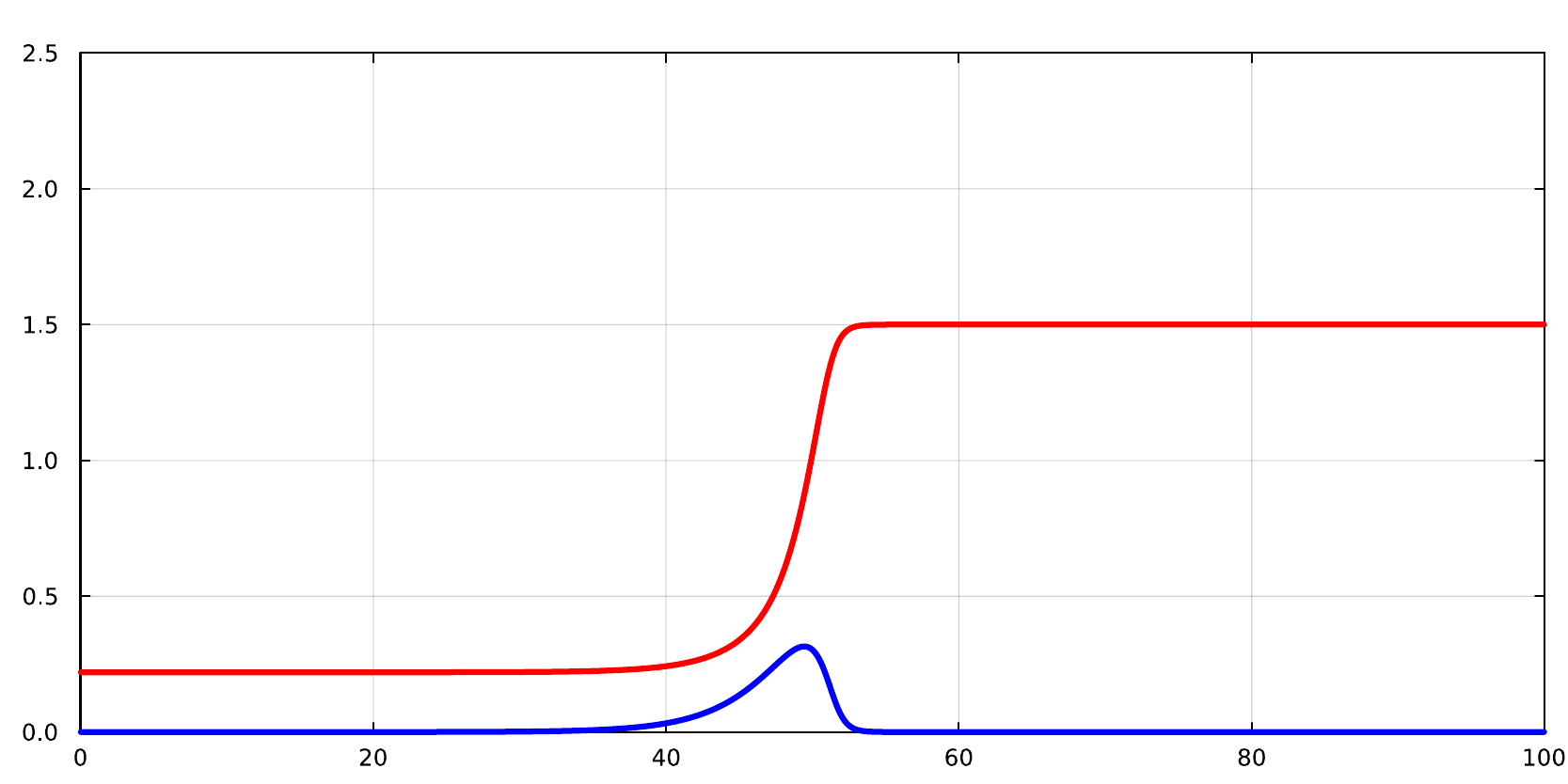}
    \caption{Traveling wave solution of \eqref{eq:cross_sys_zero} when $\beta = U_{initial} = 1.5$, $g(u) = 2 - u$ and $h(u) = \beta u v - v$. The red curve and the blue curve respectively represent $U$ and $V$. Then, the wave speed and $U_{final}$ are approximately $c=1.6328$ and $U_{final}=0.21662$.}
    	% $\delta=1.0$, $\gamma=1.0$, $\beta=1.5$, $g_1=2.0$, $g_2=1.0$ and $u_0=1.5$. The red curve and the blue curve respectively represent $U$ and $V$. Then, the wave speed and $U_\infty$ are approximately $c=1.6328$ and $U_\infty=0.21662$. }
    \label{fig:1}
\end{figure}

The following theorem is concerned with traveling wave solutions of \eqref{eq:cross_sys_zero} for general choices of the nonlinearities~$g$ and~$h$.
\begin{theo}\label{th:main_tw}
	Assume that $h$, $g$ and $U_{initial}>0$ are such that
\begin{equation}\label{hyp:u0}
	\begin{array}{l}
	h (U_{initial}) > 0 > h (0), \qquad 
%	\vspace{5pt}  
	\forall u \in [0,U_{initial}], \quad 
	h' (u ) >0, \quad g (u)>0.
\end{array}
 \end{equation}
Then there exists $c^* (U_{initial}) >0$ such that~\eqref{eq:cross_sys_zero} admits a traveling wave with speed~$c$ and right limit $U_{initial}$ if and only if $c \geq c^* (U_{initial})$.  Moreover, its left limit is 
$$U_{final} = U_{initial} e^{-\beta W_+},$$
where $W_+$ is the unique positive solution of
$$\int_0^{W_+} h (U_{initial} e^{-\beta s}) ds = 0.$$
\end{theo}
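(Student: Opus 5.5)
The plan is to reduce the traveling wave problem for~\eqref{eq:cross_sys_zero} to a single scalar monostable traveling wave problem, following the cumulated-density idea of~\cite{Britton91}.

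\textbf{Step 1: reduction to a scalar equation.} Let $(U,V)(\xi)$, $\xi=x-ct$, be a traveling wave. The first equation gives $-cU'=-\beta UV$. Since $U,V>0$ and $U$ is nonconstant, $c\neq 0$. Define $W$ by $U=U_{initial}e^{-\beta W}$, so that $W(+\infty)=0$. Then $V=-cW'$. Positivity of $V$ forces $W$ to be strictly monotone, and $U_{final}<U_{initial}$ forces $W(-\infty)>0$, hence $W'<0$ and $c>0$. Set $k(W):=g(U_{initial}e^{-\beta W})$ and
$$H(W):=\int_0^W h(U_{initial}e^{-\beta s})\,ds .$$
Plugging $V=-cW'$ into the $V$-equation and dividing by $c$ gives $cW''=-(k(W)W')''-h(U)W'$. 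Integrating from $\xi$ to $+\infty$, using the decay of $W',W''$ at $+\infty$ (a point to justify from the definition and standard elliptic estimates), I expect to obtain
$$(k(W)W')'+cW'+H(W)=0,\qquad W(+\infty)=0,\quad W(-\infty)=W_\infty>0,\quad W'<0 .$$
At $-\infty$, $W'\to0$ and $(kW')'\to0$, so $H(W_\infty)=0$.

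By~\eqref{hyp:u0}, $s\mapsto h(U_{initial}e^{-\beta s})$ is decreasing, starts positive and tends to $h(0)<0$. Hence $H$ is positive on $(0,W_+)$, vanishes exactly at $0$ and at a unique $W_+>0$, and satisfies $H'(0)=h(U_{initial})>0$ and $H'(W_+)=h(U_{initial}e^{-\beta W_+})<0$. This forces $W_\infty=W_+$, which gives the formula for $U_{final}$. Conversely, every decreasing solution $W$ of this scalar problem with $c>0$ gives back a traveling wave through $U=U_{initial}e^{-\beta W}$ and $V=-cW'>0$. Here I differentiate the scalar equation to recover the $V$-equation, and $g>0$ ensures nondegeneracy.

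\textbf{Step 2: phase plane for the scalar problem.} Since $W$ is monotone, I will write $p(W)=k(W)W'(\xi)<0$ as a function of $W\in(0,W_+)$. The equation becomes
$$p\,\frac{dp}{dW}+c\,p+k(W)H(W)=0,$$
which is the classical phase-plane equation of a monostable front for the semilinear equation $\phi''+c\phi'+F(\phi)=0$ with $F:=kH$. Indeed, this is exactly the change of independent variable $d\tau=d\xi/k(W)$. The function $F$ is positive on $(0,W_+)$, vanishes at $0$ and $W_+$, and satisfies $F'(0)=g(U_{initial})h(U_{initial})>0$ and $F'(W_+)<0$. The heteroclinic connections are then analysed as in the classical monostable theory (Aronson--Weinberger, or a direct shooting argument). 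There is a one-parameter family of trajectories leaving the saddle $(W_+,0)$. For each $c$, one asks whether the unstable trajectory reaches $(0,0)$ with $p<0$.

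I will prove three facts. First, trajectories are monotone in $c$ by comparison of the first-order ODEs for $p$, so the set of admissible speeds is an interval $[c^*,\infty)$. Second, this set is closed, by continuous dependence. Third, it is nonempty and bounded below: linearization at $0$ forces $c\ge 2\sqrt{F'(0)}>0$, and for large $c$ a trapping region of the form $p\ge -\lambda W$ gives existence. One should take care that the change of variable $\tau\leftrightarrow\xi$ is a bijection, which holds because $k$ is bounded above and below on $[0,W_+]$, and that $c^*$ depends only on $U_{initial}$.

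\textbf{Main obstacle.} I expect the delicate part to be the rigorous equivalence in Step 1. This means justifying the integration up to $+\infty$ from the limits in the definition alone, together with the uniqueness of the left limit. The proof that the speed set is exactly $[c^*,\infty)$ is essentially classical once the problem is in phase-plane form. If the monotonicity in $c$ causes trouble because $F$ is not of KPP type, I will fall back on the standard sliding and comparison argument: a front with speed $c_1$ serves as a sub/supersolution barrier for the trajectory with speed $c_2>c_1$.
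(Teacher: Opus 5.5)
Your proposal is correct and follows the paper's route. You use the cumulated-density reduction to $(d(W))''+cW'+H(W)=0$ with $d'(W)=g(U_{initial}e^{-\beta W})$. You then apply the change of variable $d\tau=d\xi/d'(W)$ (your phase-plane variable $p=(d(W))'$), which turns it into the semilinear monostable problem with $F=d'H$, where Aronson--Weinberger gives the speed set $[c^*,\infty)$. There are two minor differences: defining $W$ through $U$ rather than as $\frac1c\int_z^{+\infty}V$ makes the integrability of $V$ automatic, and the step you flag as delicate is elementary, since $(d(W))''+cW'+H(W)$ is constant and $(d(W))'=g(U)W'\to0$ at $\pm\infty$ forces both this constant and $H(W(-\infty))$ to vanish.
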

We highlight the fact that the value of the right limit~$U_{initial}$ is not prescribed here, as long as it satisfies~\eqref{hyp:u0}. Each value of~$U_{initial}$, i.e. each value of the density of susceptible/prey before the epidemic/predator outbreak, gives rise to a different family of traveling waves. On the other hand, the value of the left limit~$U_{final}$ is uniquely determined by the choice of~$U_{initial}$.

Notice that the existence and uniqueness of~$W_+$ is an elementary consequence of assumption~\eqref{hyp:u0}. The value of~$W_{+}$ is especially meaningful. Indeed, according to Theorem~\ref{th:main_tw}, it is involved in the formula for the left limit of the traveling wave, which can also be understood as the final density of susceptible/prey after the invasion of the disease/predator. But also, it will turn out to be the final cumulated density of infected/predators at any given spatial point, i.e.
$$W_+ = c \int_{-\infty}^{+\infty} V (z) dz = \int_{-\infty}^{+\infty} v(s,x)ds$$
for any $x \in \mathbb{R}$, where $v(t,x) = V (x-ct)$ is the second component of the traveling wave. 

Finally, as far as traveling waves are concerned, the situation is analogous to the monostable scalar equation, as well as to the case of the system with linear diffusion, i.e. when $g$ is constant in~\eqref{eq:cross_sys_zero}. That is, we found a family of traveling waves above some minimal critical speed, which we expect to be the invasion speed for e.g. compactly supported initial data for the predator/infected~$v$. We choose not to address this issue, though it may be handled in a similar way as done in~\cite{DucrotGiletti}. Indeed, as we already outlined, this existence result relies on showing that~\eqref{eq:cross_sys_zero} is in fact equivalent to a scalar equation. As this equation satisfies a comparison principle, its traveling waves are known to attract solutions associated with large classes of initial data (e.g. compactly supported or exponentially decaying), a fact which can be used to recover similar large-time asymptotics in the Cauchy problem associated with~\eqref{eq:cross_sys_zero}. Instead, we choose here to focus on the dependence of the traveling wave on the parameters and especially on the effect of coupling in the diffusion.

To do so, we will consider the subclass of nonlinearities~$g$ and~$h$ given in~\eqref{def:example1}. In this case, we are able to fully address how the invasion speed~$c^*$ and the final outcome~$U_{final}$ depend on the initial value~$U_{initial}$ as well as the coupling function~$g$.
\begin{theo}\label{th:main_tw2}
    Consider
	$$g (u) = g_1 - g_2 u, \quad h (u ) = \gamma \beta u - \delta,$$
	where $g_1, g_2, \beta, \gamma, \delta$ are positive constants. 
	Consider also $U_{initial}$ satisfying~\eqref{hyp:u0}, i.e.
%	as well as $U_{initial}$ such that~\eqref{hyp:u0} is satisfied, i.e.
	    $$\frac{\delta}{\gamma \beta} <  U_{initial} < \frac{g_1}{g_2}.$$
	Then the conclusions of Theorem~\ref{th:main_tw} hold true, as well as the following two statements.
	\begin{enumerate}[$(i)$]
		\item For any $c \geq c^* (U_{initial})$, the left limit of the $U$-component is given by
		$$U_{final} = U_{initial}  e^{-\beta W_+},$$
		where $W_+$ is the unique positive zero of
		$$w \mapsto \gamma U_{initial} (1 - e^{-\beta w}) - \delta w.$$
		In particular, $U_{final}$ is a decreasing function of~$U_{initial}$.
		\item The minimal speed $c^* (U_{initial})$ is given by the following formula:
$$c^* (U_{initial}) = \left\{ 
\begin{array}{ll}
	2 \sqrt{(g_1- g_2 U_{initial}) ( \gamma \beta U_{initial} - \delta)} & \quad \mbox{ if } \  U_{initial} \leq \frac{1}{2} \left(\frac{g_1}{g_2} + \frac{\delta}{\gamma \beta} \right) ,\vspace{3pt}\\
		g_1 \sqrt{\frac{\gamma \beta}{g_2}} - \delta \sqrt{\frac{\delta}{\gamma \beta}}&  \quad \mbox{ if } \ U_{initial}   \geq  \frac{1}{2} \left(\frac{g_1}{g_2} + \frac{\delta}{\gamma \beta} \right). \\
\end{array}
\right.
$$
    In particular, $c^*$ is a nondecreasing function of~$U_{initial}$.
	\end{enumerate}
\end{theo}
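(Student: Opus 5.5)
The plan is to reduce \eqref{eq:cross_sys_zero} to a scalar monostable equation and then treat both parts of the statement on that scalar problem.

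\textbf{Reduction.} Set $w(t,x)=\int_{-\infty}^t v(s,x)\,ds$ and $U_0:=U_{initial}$. The first equation integrates to $u=U_0e^{-\beta w}$. Note that $g(u)v=\partial_t\Phi(w)$ and $h(u)v=\partial_t F(w)$, where
$$\Phi(w)=\int_0^w g(U_0e^{-\beta s})\,ds,\qquad F(w)=\int_0^w h(U_0e^{-\beta s})\,ds .$$
Integrating the $v$-equation in time then gives $\partial_t w=\partial_x^2\Phi(w)+F(w)$.

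A traveling profile $W(z)$ therefore solves $(D(W)W')'+cW'+F(W)=0$ with $D=\Phi'>0$. The change of variable $dz=D(W)\,d\tau$ turns this into
$$W_{\tau\tau}+cW_\tau+f(W)=0,\qquad f(w)=g(U_0e^{-\beta w})F(w).$$
Under \eqref{hyp:u0}, $f$ is monostable on $[0,W_+]$: $f(0)=f(W_+)=0$, $f>0$ in between, and $f'(0)=g(U_0)h(U_0)>0$. The classical phase-plane theory then gives a front if and only if $c\ge c^*$, with $c^*\ge 2\sqrt{f'(0)}$. This is presumably how Theorem~\ref{th:main_tw} is proved, and I take it for granted.

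\textbf{Part $(i)$.} With $h(u)=\gamma\beta u-\delta$ we get $F(w)=\gamma U_0(1-e^{-\beta w})-\delta w$, so $W_+$ is its positive zero as stated. For monotonicity, write $y=\beta W_+$. The relation becomes $U_{final}=U_0e^{-y}$ with $\gamma\beta U_0(1-e^{-y})=\delta y$, that is,
$$\frac{\gamma\beta}{\delta}U_{final}=\frac{y}{e^y-1}.$$
The right-hand side is decreasing in $y$, and $y$ increases with $U_0$ because $\delta y/(1-e^{-y})$ is increasing in $y$. Hence $U_{final}$ decreases in $U_0$.

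\textbf{Part $(ii)$.} Write $A=g_1/g_2$ and $B=\delta/(\gamma\beta)$. The linear bound is $2\sqrt{f'(0)}=2\sqrt{(g_1-g_2U_0)(\gamma\beta U_0-\delta)}$. I would handle the two regimes separately.

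\emph{Pulled regime.} When $U_0\le (A+B)/2$, I would show $c^*=2\sqrt{f'(0)}$. The standard tool is a Hadeler--Rothe type criterion: exhibit a positive function $\rho$ on $(0,W_+)$ with $\rho(0)=0$ and $\rho'(0)=-\sqrt{f'(0)}$ (up to sign conventions) satisfying the differential inequality
$$\rho\rho'+c\rho+f\le 0,\qquad c=2\sqrt{f'(0)},$$
along the phase trajectory $W_\tau=\rho(W)$.

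\emph{Pushed regime.} When $U_0\ge (A+B)/2$, I would look for an explicit heteroclinic trajectory. It is best to work back in the original variables $(U,P)$ with $P=g(U)V$. The system reads $cU'=\beta UV$ and $P''+cV'+h(U)V=0$, and I would try an invariant curve on which $P'$ is a polynomial (likely linear) in $U$ along the orbit. Substituting this ansatz should produce algebraic conditions that fix the speed independently of $U_0$. I would then check that the resulting orbit decays at the faster exponential rate, which identifies it as the minimal-speed front.

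\textbf{Main obstacle.} I expect the hardest step to be finding this exact trajectory, together with the sharp matching at the threshold $U_0=(A+B)/2$. As a consistency check, at the threshold
$$2\sqrt{f'(0)}=2\sqrt{g_2\gamma\beta}\,\frac{A-B}{2}=\sqrt{g_2\gamma\beta}\,(A-B),$$
which should coincide with the constant formula. This identity pins down the ansatz coefficients.

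Monotonicity of $c^*$ then follows. The quantity $(A-U_0)(U_0-B)$ is increasing for $U_0$ below the midpoint $(A+B)/2$, and $c^*$ is constant above it.
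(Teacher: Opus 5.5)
Your reduction and part $(i)$ are fine. Your identity $\frac{\gamma\beta}{\delta}U_{final}=\frac{y}{e^{y}-1}$, with $y=\beta W_+$ increasing in $U_{initial}$, is a short proof of the monotonicity the paper omits. The gap is part $(ii)$, which is the real content of the theorem. In the pulled regime you never write down the barrier $\rho$, and in the pushed regime you never produce the orbit, so neither branch of the formula for $c^*$ is established. Matching the two branches at $U^*=\frac12\big(\frac{g_1}{g_2}+\frac{\delta}{\gamma\beta}\big)$ to ``pin down coefficients'' only reverse-engineers the claimed answer.

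Moreover, the ansatz you propose for the pushed regime cannot succeed. Along any front $P'\to0$ at both ends, while $U\to U_{initial}$ and $U\to U_{final}\neq U_{initial}$. So a $P'$ linear in $U$ vanishes identically, which forces $V\equiv0$. More generally, the minimal front is unique up to shift (Remark~\ref{rmk:uniqueness}). For $U_{initial}\ge U^*$ its orbit is $g(U)V=\mu\big[\gamma(U_{initial}-U)-\frac{\delta}{\beta}\ln(U_{initial}/U)\big]$ with $\mu=c\sqrt{g_2/(\gamma\beta)}$. Hence $P'=-\frac{\mu}{c}h(U)V$ involves $\ln U$ and is not polynomial in $U$.

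The missing idea is to take the phase-plane slope to be a constant multiple of the integrated reaction. In your notation, look for $W_\tau=-\eta(W)$ with $\eta=\lambda F/h(U_{initial})$. Since $F'(w)=h(s)$ and $g,h$ are affine in $s=U_{initial}e^{-\beta w}$, one gets $\eta\eta'-c\eta+f=F(w)\,B(s)$, where $B(s)=\frac{\lambda^2h(s)}{h(U_{initial})^2}-\frac{c\lambda}{h(U_{initial})}+g(s)$ is affine in $s$.

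\emph{Pushed regime.} Requiring $B\equiv0$ forces $\lambda=h(U_{initial})\sqrt{g_2/(\gamma\beta)}$ and $c=g_1\sqrt{\gamma\beta/g_2}-\delta\sqrt{g_2/(\gamma\beta)}$, independently of $U_{initial}$. This is an explicit front. Its decay rate $\lambda$ is the larger root of $\lambda^2-c\lambda+g(U_{initial})h(U_{initial})=0$ exactly when $g_2h(U_{initial})\ge\gamma\beta g(U_{initial})$, i.e. $U_{initial}\ge U^*$. By the pushed/pulled trichotomy it is then the minimal front.

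\emph{Pulled regime.} Take instead $\lambda=\sqrt{f'(0)}$ and $c=2\lambda$. Then $B(U_{initial})=0$ and $B$ has slope $\gamma\beta g(U_{initial})/h(U_{initial})-g_2$, which is nonnegative iff $U_{initial}\le U^*$. So $B\le0$ for $s<U_{initial}$. Then $\rho=-\eta$ (negative, not positive, with $\rho'(0)=-\sqrt{f'(0)}$) is exactly the Hadeler--Rothe barrier you wanted, and it gives $c^*\le c^*_{lin}$.

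The paper packages this same computation as $f=f'(0)\,(w-A(w))(1+\chi A'(w))$, with $w-A(w)=F(w)/h(U_{initial})$ and $\chi=\frac{g_2h(U_{initial})}{\gamma\beta g(U_{initial})}$. It then invokes Proposition~\ref{prop:linear_determ}: an explicit pushmi-pullyu wave at $\chi=1$, monotonicity in $\chi$, and an explicit pushed wave for $\chi>1$.

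Finally, your threshold check yields $\delta\sqrt{g_2/(\gamma\beta)}$. That is the correct constant, as in Proposition~\ref{prop:pulled}; the $\delta\sqrt{\delta/(\gamma\beta)}$ in the statement is a typo.
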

The first statement starts as merely a restatement of the formula for~$W_+$ from Theorem~\ref{th:main_tw} in this specific case. Still the new formula has the benefit of eventually leading to the monotonicity of~$U_{final}$ with respect to~$U_{initial}$, after a lengthy but elementary computation which we thus omit.

{The second statement handles the dependence of the speed with respect to~$U_{initial}$. Perhaps surprisingly, it provides a fully explicit formula with respect to all parameters, in the spirit of the one found for a cubic monostable nonlinearity in~\cite{HadelerRothe}. This is thanks to a generalization by~\cite{An23}, which turns out to apply here. The resulting monotonicity of~$c^*$ is striking: though increasing the value of~$U_{initial}$ increases the growth rate of the infected/predator around the invaded steady state, it also diminishes its diffusion rate. One may have expected this latter fact to slow the invasion in some parameter range. 

This monotonicity appears to be closely related to the linear determinacy of the traveling wave speed. Linear determinacy refers to the equality between~$c^*$ as defined above, and the minimal speed~$c^*_{lin}$ associated with the linearized system around the right limit $(U_{initial},0)$. Here, 
this is reduced to the scalar equation
%this boils down to the scalar equation
$$\partial_t v = g(U_{initial}) \partial_x^2 v  + h (U_{initial}) v ,$$
%{\color{magenta}
so that
\begin{equation}\label{linear_speed}
c^*_{lin} = 2 \sqrt{ g(U_{initial}) h (U_{initial})},
\end{equation}
hence, when the functions~$g$ and~$h$ are chosen as in Theorem~\ref{th:main_tw2},
$$c^*_{lin} = 2 \sqrt{(g_1- g_2 U_{initial}) ( \gamma \beta U_{initial} - \delta)}.$$
%}
Seen as a function of~$U_{initial}$, the linear speed $c^*_{lin}$ reaches a maximum when $U_{initial} = \frac{1}{2} \left(\frac{g_1}{g_2} + \frac{\delta}{\gamma \beta} \right)$. In other words, the minimal traveling wave speed~$c^*$ of the nonlinear system~\eqref{eq:cross_sys_zero} coincides with the linear speed as long as the latter is increasing with respect to~$U_{initial}$, and with its maximum otherwise (see Figure~\ref{fig:2} 
in Section \ref{sec:comp}).\\

The above discussion should be worth expanding in the more general context of cross-diffusion systems. As a matter of fact, we will end this paper with some numerics on the more general system where both species diffuse, i.e. 
\begin{equation}\label{eq:cross_sys_epsilon}\tag{$P_\varepsilon$}
	\left\{
	\begin{array}{l}
		\partial_t u =  \varepsilon \partial_x^2 u -  \beta uv , \vspace{3pt} \\
		\partial_t v = \partial_x^2 ( g(u) v ) +  h (u) v,
	\end{array}
	\right. \quad t >0, \ x \in \mathbb{R}.
 \end{equation}
We will find that the phenomena observed above are quite robust. Indeed, 
%{\color {magenta} 
numerical simulations suggest that~\eqref{eq:cross_sys_epsilon} also admits traveling waves. Moreover, if~$g$ is linear, then the minimal speed does not even depend on the diffusion parameter~$\varepsilon$. This is not entirely unexpected: indeed, the parameter~$\varepsilon$ is not involved in the linearized system around the right steady state~$(U_{initial},0)$, nor in the resulting formula~\eqref{linear_speed} for the linear speed. %this is in fact true for the linearized system around the right steady state $(U_{initial},0)$. 
Still, it is noteworthy here since, according to Theorem~\ref{th:main_tw2}, the traveling wave speed is not always linearly determined. Furthermore, we will see that, for other choices of the function~$g$, the minimal speed is in fact no longer independent of~$\varepsilon$.
 We hope to address analytically the full system, in which both species diffuse, in some future work.
 %}

\section{From~\eqref{eq:cross_sys_zero} to a scalar nonlinear equation}\label{sec:reduc}

In this section, we show how system~\eqref{eq:cross_sys_zero} can be reduced to a scalar equation, albeit at this stage with nonlinear diffusion. In the footsteps of~\cite{Britton91}, we first present a formal argument which allows us to derive that scalar equation from the full system by a straightforward integration in time. To make the argument rigorous, one may instead start from the scalar equation and show that a time derivative is well-defined and satisfies~\eqref{eq:cross_sys_zero}. This approach will eventually lead to both the well-posedness of the Cauchy problem, and the existence of traveling wave solutions.

Consider $t_0 \in \mathbb{R}$ and $(u,v)$ a solution of~\eqref{eq:cross_sys_zero} for any time $t > t_0$. Notice that
$$u (t,x) = u(t_0,x) e^{-\int_{t_0}^t \beta v(s,x) ds}.$$
Now define
$$\tilde{v}(t,x) = \int_{t_0}^t v (s,x) ds.$$
Then, integrating in time the $v$-equation,
\begin{eqnarray*}
\partial_t \tilde{v} - v (t= t_0) &= & \int_{t_0}^t  [ \partial_x^2 ( g(u) v) + h(u) v ] ds\\
%& = &  \int_{t_0}^t \partial_x^2 ( (g (u)  v) ds  + \gamma u (t= t_0)  (1 - e^{-\beta \tilde{v}} )  - \delta \tilde{v} \\
& = &  \partial_x^2 \left(  \int_{t_0}^t g(u) v ds \right)  + \int_{t_0}^t h(u)v ds\\
& = &  \partial_x^2 \left(  \int_{t_0}^t g(u(t = t_0)  e^{-\beta \tilde{v} } ) \partial_t \tilde{v} ds \right)  + \int_{t_0}^t  h(u(t = t_0)  e^{-\beta \tilde{v} } ) \partial_t \tilde{v} ds ,
\end{eqnarray*}
and
\begin{equation}\label{eq:tildev}
\partial_t \tilde{v} - v (t=t_0) =   \partial_x^2 \left(  \int_{0}^{\tilde{v} (t,x)}  g(u(t = t_0)  e^{-\beta s} )  ds \right)  +  \int_{0}^{\tilde{v} (t,x)}  h(u(t = t_0)  e^{-\beta s } )  ds 
\end{equation}

\paragraph{Cauchy problem.} First, we pick $t_0 = 0$ and get that $\tilde{v}$ at least formally solves the scalar nonlinear and heterogeneous equation
\begin{equation}\label{eq:scalar_gen_cauchy}
	\partial_t w = \partial_x^2 (d(x,w)) + f(x,w),
\end{equation}
where $\partial_x^2$ denotes the second derivative of $x \mapsto d (x,w(t,x))$, with
\begin{equation*}
	d(x,w) := \int_0^w g (u_0 (x) e^{-\beta s}) ds,	 
\end{equation*}
and 
\begin{equation*}
	f(x,w) :=  v_0 (x) + \int_0^w h (u_0 (x) e^{-\beta s}) ds . 
\end{equation*}
Figure~\ref{fig:schematic} summarizes the process to reduce system \eqref{eq:cross_sys_zero} to a scalar equation.
\begin{figure}[htbp]
    \centering
    \includegraphics[width=130mm]{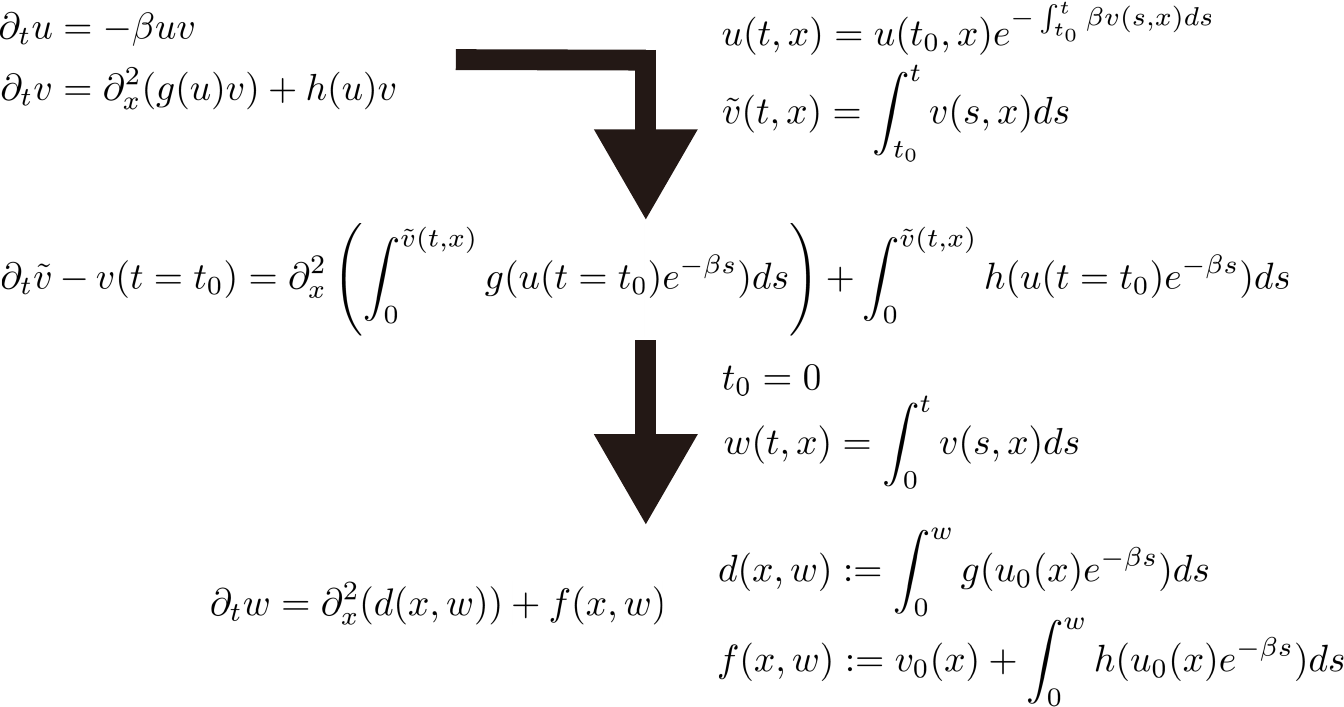}
    \caption{Schematic figure for the derivation of a scalar equation \eqref{eq:scalar_gen_cauchy} from the system \eqref{eq:cross_sys_zero}. }
    \label{fig:schematic}
\end{figure}

Notice that under the assumptions of Theorem~\ref{thm:cauchy}, the functions $d$ and $f$ are $C^{2,\alpha}$ with respect to~$x$ and smooth with respect to $w$, and also globally lipschitz with respect to $w \geq 0$. Moreover, 
$$f(x,0) =v_ 0 \geq 0,$$
and 
$$\inf_{x \in \mathbb{R}, w \geq 0} \partial_w d (x,w) = \inf_{x \in \mathbb{R}, w \geq 0} g(u_0 e^{-\beta w}) > 0.$$
In particular, we may rewrite \eqref{eq:scalar_gen_cauchy} as 
$$\partial_t w = \partial_w d (x,w) \times \partial_x^2 w   +  A ( x , w , \partial_x w),$$
with $$A (x,w, \partial_x w) := 2 \partial_x w \times \partial_{xw} d(x,w)  + \partial_{w}^2 d (x,w) \times (\partial_x w)^2 +  f(x,w)+  \partial_x^2 d (x,w) ,$$
where $\partial_x^2$ now denotes the actual second partial derivative with respect to~$x$ of the function $(x,w) \mapsto d (x,w)$.

By the standard theory of nonlinear parabolic equations~\cite{Ladyzhenskaya}, this implies that \eqref{eq:scalar_gen_cauchy}, together with the initial condition 
$$w (t=0) = 0,$$
admits a classical nonnegative solution~$w \in C^{1,2} ( [0,+\infty) \times \R )$.

Then we may define
$$u(t,x) := u (t=0,x) e^{-\beta w (t,x)} .$$
We would like to show that $(u,\partial_t w)$ is a solution of the original system~\eqref{eq:cross_sys_zero}. However, at this stage it is not clear that $\partial_t w$ possesses enough regularity. Instead, we may notice that $u \in C^{1,2} ([0,+\infty) \times \R)$, and even $u \in C^{1+ \frac{\alpha}{2}, 2 + \alpha} ([0,+\infty) \times \R)$, for some $\alpha >0$, by Schauder estimates. Due to $w \geq 0$, we also have that $0 \leq u \leq \sup u_0$, and then, by~\eqref{eq:ass_g},
$$\inf_{t \geq 0, x \in \mathbb{R}} g (u(t,x)) >0.$$
Thus
$$\partial_t v = \partial_x^2 ( g(u) v) + h(u) v$$
admits a global in time classical solution $v \in C^{1,2} ([0,+\infty) \times \R)$, which by the comparison principle is nonnegative. This in fact already concludes the proof of the well-posedness of~\eqref{eq:cross_sys_zero}. Furthermore, repeating the above integration argument, one may find that $(t,x) \mapsto  \int_0^t v(s,x) ds$ solves~\eqref{eq:scalar_gen_cauchy}, hence by uniqueness it coincides with~$w$. In other words, $\partial_t w \equiv v$ as expected.

Let us now look into the large-time behavior of this solution and end the proof of Theorem~\ref{thm:cauchy}. For any $w>0$, we have
\begin{eqnarray*}
A (x,w,0) & = & f(x,w) + \partial_x^2 d (x,w) \\
& = & v_0 (x) + \int_0^{w} h (u_0 (x) e^{-\beta s}) ds \\
& & + \int_0^{w} \left[- \beta \partial_x^2 u_0 (x) e^{-\beta s} g' (u_0 (x) e^{-\beta s}) + \beta^2 (\partial_x u_0)^2 e^{-2 \beta s} g'' (u_0 (x) e^{-\beta s} ) \right]ds \\
%& \leq & \int_0^{w} h (u_0 (x) e^{-\beta s}) ds + C \int_0^{w} e^{-\beta s} ds \\
& \leq & \|v_0\|_\infty + \int_0^{w} h (u_0 (x) e^{-\beta s}) ds + C e^{-\beta w},
\end{eqnarray*}
where~$C$ is a large positive constant coming from the boundedness of $u_0$ and its derivatives, and of~$g',g''$ on $[0, \|u_0\|_\infty]$. Thus, provided that 
$$h (0) < 0,$$
one finds some large enough positive constant~$\overline{w}$ such that
$$A (x, \overline{w}, 0) < 0.$$
Recalling that $w(t=0)=0$, we infer from the comparison principle that 
\[
w (t,x) \le \overline{w} \quad \mbox{for all } \ (t,x) \in  \mathbb{R}^+ \times \mathbb{R}.
\]
Remember that $\partial_t w = v $ is nonnegative. Thus~$w(t,x)$ converges (at least) pointwise as~$t \to +\infty$. By parabolic estimates we conclude that 
\[
\lim_{t \to \infty} v(t,x)= \lim_{t \to \infty} \partial_t w (t,x) =0, 
\]
%for all~$x \in \mathbb{R}$. It follows immediately from this result and the fact that $u(t,x) = u_0 (x) e^{-\beta w (t,x)}$ that 
\[
 \lim_{t \to \infty}u(t,x)
 \begin{cases}
 >0 & {\rm if} \ x \in \{x \ |\ u_0(x) >0 \} ,\\
 =0 & {\rm otherwise}.
\end{cases}
\]
This concludes the proof of Theorem~\ref{thm:cauchy}.

\paragraph{Traveling waves.} Next, as far as traveling wave solutions of \eqref{eq:cross_sys_zero} are concerned, we have that
$$\forall x\in \mathbb{R}, \quad \lim_{t_0 \to -\infty} ( u (t_0,x), v(t_0,x)  )  = (U_{initial},0). $$
Thus, repeating the above formal argument as above but passing instead to the limit as $t_0 \to -\infty$ in \eqref{eq:tildev}, we find that $\tilde{v}$ solves the scalar equation with nonlinear diffusion
\begin{equation}\label{eq:scalar_gen}
	\partial_t w = \partial_x^2 (d(w)) + f(w).
\end{equation}
with
\begin{equation}\label{def:dd_tw}
	d(w) := \int_0^w g (U_{initial} e^{-\beta s}) ds,	 
\end{equation}
and 
\begin{equation}\label{def:ff_tw}
	f(w) :=  \int_0^w h (U_{initial} e^{-\beta s}) ds . 
\end{equation}
Let us now invoke a traveling wave existence result for~\eqref{eq:scalar_gen}, whose proof we postpone to the next section.
\begin{prop}\label{prop:reduced}
	Consider equation~\eqref{eq:scalar_gen} with smooth $d$ and $g$. Assume that $f$ is of the monostable type, i.e.
			$$\exists K >0, \ \forall w \in (0,K), \quad f (0) = 0 = f(K) < f (w),$$
		and 
			$$\forall w \in [ 0,K] , \quad d' (w) > 0.$$
	Then there exists $c^*$ such that~\eqref{eq:scalar_gen} admits a traveling wave solution with speed $c$ if and only if $c \geq c^*$. By traveling wave with speed~$c$, here we mean a solution~$W$ of 
	\begin{equation*}
		\left( d(W)  \right)'' + c W ' +  f(W)  = 0,
	\end{equation*}
	satisfying also
	$$W(+\infty) = 0  < W (\cdot) <  W(-\infty)= K .$$
	
% 	Furthermore, for any given $c \geq c^*$, the traveling wave with speed~$c$ is unique up to shift, in the sense that, for any pair of traveling waves $W_1$ and $W_2$ with the same speed, there exists~$X \in \mathbb{R}$ such that
% 	        $$W_1 (\cdot - X) \equiv W_2.$$
\end{prop}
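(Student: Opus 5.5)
The plan is to remove the nonlinear diffusion by a change of unknown and reduce to the classical monostable problem with a (possibly degenerate-free) density-dependent coefficient. Set $\phi = d(W)$. Since $d'>0$ on $[0,K]$ (and we may extend $d$ outside $[0,K]$ to be strictly increasing without affecting solutions with values in $[0,K]$), $d$ is a diffeomorphism from $[0,K]$ onto $[0,d(K)]$. Writing $W = d^{-1}(\phi)$, the traveling wave equation becomes
\begin{equation*}
\phi'' + \frac{c}{d'(d^{-1}(\phi))}\,\phi' + f(d^{-1}(\phi)) = 0 .
\end{equation*}
It is more convenient to use the phase plane directly: since any such wave is monotone (to be proved), we write $P = d(W)' $ as a function of $W$, i.e. set $P(W) = -d'(W) W'$, which turns the problem into the first-order ODE
\begin{equation*}
P \frac{dP}{dW} = c\, d'(W)\, P - d'(W) f(W), \qquad W \in (0,K),
\end{equation*}
with $P>0$ on $(0,K)$ and $P(0)=P(K)=0$. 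Equivalently, after the time-reparametrization $d\xi = d'(W)^{-1} dz$ one obtains $W_{\xi\xi}$-type dynamics: $\dot W = -P$, $\dot P = c\,d'(W) P - d'(W) f(W)$ divided appropriately, i.e. the standard Fisher-KPP phase-plane system with positive weight $d'(W)$. The key structural fact is that the product $d'(W) f(W)$ is again monostable on $[0,K]$, and $d'$ is bounded above and below there.

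The steps are then as follows. First, show any traveling wave is strictly decreasing, by the usual argument: at a critical point, $d(W)''=d'(W)W''$ and the equation gives $d'(W)W'' = -f(W)<0$, so only maxima occur, incompatible with the limits $K$ at $-\infty$ and $0$ at $+\infty$ unless monotone. Second, analyze the equilibria $(0,0)$ and $(K,0)$ of the planar system $W' = Q/d'(W)$, $Q' = -cQ/d'(W)-f(W)$ with $Q=d(W)'$: linearization at $(K,0)$ gives a saddle (since $f'(K)\le0$; if $f'(K)=0$ use a center-manifold/monotonicity argument, or note the unstable manifold still exists in the relevant quadrant), while at $(0,0)$ the eigenvalues solve $d'(0)\lambda^2 + c\lambda + f'(0)=0$. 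Third, use a shooting argument in $c$: follow the branch of the unstable manifold of $(K,0)$ entering $\{Q<0, 0<W<K\}$; define the sets of $c$ for which it exits through $\{Q=0\}$ versus through $\{W=0, Q<0\}$. Both sets are open, the trajectory depends monotonically on $c$ (comparison of $dQ/dW = -c - f(W)d'(W)/Q$ in $c$), so there is a threshold $c^*$ with connections exactly for $c\ge c^*$: for large $c$ a triangle region $\{Q\ge -\mu W\}$ is invariant, forcing the orbit into $(0,0)$; for $c\le 0$ (or small) multiplying by $Q$ and integrating shows no connection, giving $c^*>0$; closedness of the set of $c$ yielding connections follows by continuous dependence and trapping. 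Monotonicity in $c$ gives that admissible $c$ form a half-line $[c^*,\infty)$.

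The main obstacle is the monotone dependence and closedness of the set of admissible speeds, in particular handling the degenerate case $f'(K)=0$ or $f'(0)=0$ where linearization is inconclusive, and making the comparison in $c$ rigorous when trajectories are compared as graphs $Q(W)$. I would first try to avoid linearization entirely by working with the scalar ODE for $Q$ as a function of $W$ and using the sub/super-solution (comparison of graphs) method of Aronson–Weinberger, with $d'$ simply a bounded positive weight; alternatively, cite the known results on density-dependent diffusion (e.g. Sánchez-Garduño–Maini) after the transformation.
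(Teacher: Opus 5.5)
Your outline is sound, but it takes a different route from the paper. The paper does no phase-plane analysis of the quasilinear problem. Since $d'f$ is monostable, Aronson--Weinberger gives waves of the semilinear equation $W_1''+cW_1'+d'(W_1)f(W_1)=0$ exactly for $c\ge c^*$. The change of independent variable $\xi=\int_0^z d'(W_1(s))\,ds$ turns such a wave into a solution of $(d(W))''+cW'+f(W)=0$ with the same limits. The inverse substitution carries any quasilinear wave back, which gives nonexistence for $c<c^*$ and uniqueness up to shift.

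You do write down this reparametrization ($d\xi=dz/d'(W)$) and the key fact that $d'f$ is monostable. But you then redo the whole Aronson--Weinberger shooting argument with the weight $d'(W)$ carried along. That is unnecessary: your planar field $\bigl(Q/d'(W),\,-cQ/d'(W)-f(W)\bigr)$ is exactly $1/d'(W)$ times the field $\bigl(Q,\,-cQ-d'(W)f(W)\bigr)$ of the semilinear problem. So both have the same oriented orbits, and your orbit equation $dQ/dW=-c-d'(W)f(W)/Q$ is the semilinear one.

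The paper's reduction buys three things. It is brief. It delegates to the cited theory the degenerate cases $f'(0)=0$ or $f'(K)=0$ that you single out as the main obstacle. Most importantly for the rest of the paper, it identifies $c^*$ with the minimal speed of $\partial_t w=\partial_x^2 w+d'(w)f(w)$. That identification is what later makes Proposition~\ref{prop:scalar_kpp}, the monotonicity of $c^*$ in $d'f$, and Proposition~\ref{prop:linear_determ} applicable. Your direct route proves the statement as written, but it only delivers that identification once the orbit coincidence is made explicit.

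Two slips should be fixed. First, the first-order equation should read $P\,\frac{dP}{dW}=cP-d'(W)f(W)$, not $c\,d'(W)P-d'(W)f(W)$. In the corrected form it is precisely the slope equation $\eta\eta'-c\eta+F=0$ with $F=d'f$, i.e.\ the reduction in one line. Second, the alternative ``the branch leaves through $\{Q=0\}$'' never occurs here, because $Q'=-f(W)<0$ on $\{Q=0,\ 0<W<K\}$. The relevant dichotomy is ``crosses $\{W=0,\ Q<0\}$'' (open in $c$) versus ``converges to $(0,0)$'' (its closed complement). It is this dichotomy, together with monotonicity in $c$ and the invariant triangle for large $c$, that yields the half-line $[c^*,\infty)$.
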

Let us briefly check that $d$ and $f$ defined by~\eqref{def:dd_tw}-\eqref{def:ff_tw} satisfy those assumptions. 
Notice that $f (0) = 0$ and $f' (w)  = h (U_{initial} e^{-\beta w})$. In particular, under the assumptions of Theorem~\ref{th:main_tw}, we have that $f'$ is decreasing and
$$f'(0) = h (U_{initial}) >0 >  h (0) = \lim_{w \to +\infty} f' (w).$$
It follows that $f$ is indeed of the monostable (and even, Fisher-KPP) type, where~$K>0$ such that $f(K)=0$ is uniquely defined by
$$\int_0^K h (U_{initial} e^{-\beta s}) ds,$$
i.e. it coincides with $W_+$ introduced in the statement of Theorem~\ref{th:main_tw}. Moreover, $$d' (w) = g(U_{initial} e^{-\beta w}) >0, $$ for any $w \geq 0$. Therefore the above proposition applies and \eqref{eq:scalar_gen} with $d,f$ from \eqref{def:dd_tw}-\eqref{def:ff_tw} admits a traveling wave solution with speed~$c$ if and only if $c \geq c^*$, for some $c^* >0$. 

Now take any such traveling wave solution~$W$ of~\eqref{eq:scalar_gen} with speed $c \geq c^*$. By our assumptions, the functions~$d,f$ are smooth, hence so is~$W$. Define
$$(U,V) (z) := (U_{initial} e^{-\beta W (z)}, - c W' (z)).$$
Then $(U,V)$ satisfy the following limits
$$(U,V) (+\infty) = (U_{initial}, 0), \qquad (U,V) (-\infty) = (U_{final}, 0),$$
where $U_{final} = U_{initial} e^{-\beta W_+}$. Moreover, derivating the $W$-equation, one can eventually check that~$(U,V)$ is a traveling wave solution of~\eqref{eq:cross_sys_zero} with the same speed~$c \geq c^*$.

It remains to check that there is no other (up to shift) traveling wave of~\eqref{eq:cross_sys_zero} with right limit~$U_{initial}$, and in particular that $c^*$ is the minimal traveling wave speed. To do so, consider a traveling wave $(u,v) (t,x) = (U,V) (x-ct)$, in the sense of Definition~\ref{defi_tw}. First, notice that its speed~$c$ must be positive. Indeed 
$$- cU ' (x-ct) = \partial_t u (t,x)  = - \beta u (t,x) v (t,x) < 0 .$$
Thus $c \neq 0$, and due to $ U(+\infty) = U_{initial} > U_{final} = U(-\infty)$, also $c>0$.

Then, $(U,V)$ solves the finite-dimensional ODE system
$$\left\{ 
\begin{array}{l}
-c U' =  - \beta U V , \\
- c V' = (g(U)V)'' + h (U) V,
\end{array}\right.
$$
whose linearization around the invaded steady state $(U_{initial},0)$ rewrites as 
 $$\left\{ 
 \begin{array}{l}
 	-c U' =  - \beta U_{initial} V , \\
 	- c V' = (g(U_{initial})V)'' + h (U_{initial}) V .
 	\end{array}
 	\right.
$$
Notice that, in the $V$-equation, both $g (U_{initial})$ and $h (U_{initial})$ are positive. Thus, by standard ODE theory, one finds that any solution going to $0$ as $z \to +\infty$ must do so exponentially. This allows us to define 
	$$\int_{-\infty}^t v (s ,x) d s ,$$
	for any $t,x \in \mathbb{R}$, and the above formal integration argument can now be made rigorous. More precisely, letting
	$$W (z) = \frac{1}{c} \int_z^{+\infty} V(s) ds,$$
	one may eventually find that
	$$ (d (W))'' + c W' + f(W) = 0.$$
	Still, to conclude that~$W$ is a traveling wave of \eqref{eq:scalar_gen_TW} with speed~$c$, it remains to check that it satisfies some appropriate asymptotics. Due to $V>0$, we immediately have that $W' < 0$ and $W(+\infty) = 0$. From the equation $c U' = \beta U V$, it is clear that $$U = U (+\infty) e^{-\beta W} = U_{initial} e^{-\beta W}.$$
	In particular, according to Definition~\ref{defi_tw},
	       $$\lim_{z \to -\infty} U_{initial} e^{-\beta W (z)} = U_{final} > 0,$$
	  and then the following limit exists and is positive:
	    $$ W ( -\infty) = \frac{1}{\beta} \ln \left( \frac{U_{initial}}{U_{final}} \right) .$$
	   Necessarily $f (W(-\infty)) = 0$, hence $W (-\infty) = W_+$, or equivalently~$W(-\infty) = K$ under the notation of Proposition~\ref{prop:reduced}. This confirms that $W$ is a traveling wave solution of~\eqref{eq:scalar_gen_TW} with speed~$c$, which finally implies that $c \geq c^*$. Moreover, the left limit of $(U,V)$ is necessarily equal to $U_{initial} e^{-\beta W_+}$.
	   Since the traveling wave solution of \eqref{eq:scalar_gen_TW} is unique up to translation (see Remark~\ref{rmk:uniqueness}), the above correspondence also yields uniqueness up to translation for the traveling waves of \eqref{eq:cross_sys_zero}.
	
	In other words, we have shown that our main Theorem~\ref{th:main_tw} follows from Proposition~\ref{prop:reduced}, whose proof will be done in the next section.

\section{Traveling waves for nonlinear scalar reaction-diffusion equations}\label{sec:tw}

The arguments in Section~\ref{sec:reduc} lead us to consider a scalar equation of the form~\eqref{eq:scalar_gen}, i.e.,
\begin{equation*}
	\partial_t w = \partial_x^2 ( d(w) ) + f(w).
\end{equation*}
In this section, we will collect several relevant facts about this equation and its traveling waves, as well as about the more standard semilinear monostable equation
\begin{equation}\label{eq:scalar_semiF}
	\partial_t w  = \partial_x^2 w+ F(w),
\end{equation}
which will turn out to be equivalent to \eqref{eq:scalar_gen} by letting $F(w) = d'(w) f(w)$ (see Section~\ref{sec:exist_tw} for the details). While this collection of facts, including the connection between~\eqref{eq:scalar_gen} and~\eqref{eq:scalar_semiF}, may be of independent interest, we do not claim any novelty as they can be found in particular in~\cite{An23,AW75,GildingKersner}.

Throughout this section, we will assume the following:
\begin{enumerate}[(H1)]
\item the function $s \mapsto d(s)$ is smooth and 
$$0< \inf d' \leq \sup d' < +\infty;$$
\item the function $f$ is of the monostable type, i.e. it is smooth and satisfies (up to a normalization)
$$f(0) = f(1) = 0, \quad f_{|(0,1)} >0.$$
\end{enumerate}
As far as~\eqref{eq:scalar_semiF} is concerned, we will similarly assume that
\begin{enumerate}[(H1')]
	\item the function $F$ is of the monostable type, i.e. it is smooth and 
$$F(0) = F(1) = 0, \quad F_{|(0,1)} >0 .$$
\end{enumerate}
Notice that (H1)-(H2) imply (H1') when $F = d' f$.

\subsection{Existence of traveling waves}\label{sec:exist_tw}

Under the assumptions (H1)-(H2), we have the following result on the existence of traveling waves.
\begin{theo}\label{th:wave_exists}
Assume that \textup{(H1)-(H2)} hold true, and let $c^* >0$ be the minimal traveling wave speed of the monostable semilinear equation
\begin{equation}\label{eq:scalar_semi}
\partial_t w = \partial_x^2 w + d'(w) f(w).
\end{equation}
Then equation~\eqref{eq:scalar_gen} admits a traveling wave solution with speed~$c$ if and only if $c \geq c^*$.

By traveling wave with speed~$c$, we mean a solution $w(t,x)= W(x-ct)$ such that 
%\begin{equation}\label{eq:scalar_gen_TW}
%\left( d (W) \right)'' + c W ' + f(W) =0,
%\end{equation}
%as well as
$$W(+\infty) = 0  < W (\cdot) <  W(-\infty)=  1.$$
\end{theo}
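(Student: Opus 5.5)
The plan is to reduce the traveling wave problem for~\eqref{eq:scalar_gen} to that of the semilinear equation~\eqref{eq:scalar_semi} by a change of the independent variable. Let $W$ be a traveling wave of~\eqref{eq:scalar_gen} with speed~$c$, i.e. a solution of $(d(W))'' + cW' + f(W) = 0$ with $W(+\infty)=0<W<1=W(-\infty)$. Introduce a new variable $\xi$ through
$$\frac{d\xi}{dz} = \frac{1}{d'(W(z))}, \qquad \xi(0)=0,$$
which is a smooth increasing bijection $\mathbb{R}\to\mathbb{R}$ by (H1), since $1/\sup d' \le d\xi/dz \le 1/\inf d'$. Set $\widetilde W(\xi) = W(z)$. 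Then $\frac{d}{dz} = \frac{1}{d'(W)}\frac{d}{d\xi}$, so $(d(W))' = d'(W) W' = \widetilde W_\xi$ and $(d(W))'' = \frac{1}{d'(\widetilde W)} \widetilde W_{\xi\xi}$. Also $cW' = \frac{c}{d'(\widetilde W)}\widetilde W_\xi$. Multiplying the profile equation by $d'(\widetilde W)$ gives
$$\widetilde W_{\xi\xi} + c\,\widetilde W_\xi + d'(\widetilde W) f(\widetilde W) = 0,$$
which is exactly the profile equation for~\eqref{eq:scalar_semi} with the same speed~$c$. The limits at $\pm\infty$ and the bounds $0<\widetilde W<1$ are preserved, since $\xi\to\pm\infty$ iff $z\to\pm\infty$.

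Conversely, given a traveling wave $\widetilde W$ of~\eqref{eq:scalar_semi} with speed $c$, we define $z(\xi)$ by $dz/d\xi = d'(\widetilde W(\xi))$ and $W(z) = \widetilde W(\xi)$. The same computation read backwards shows that $W$ solves $(d(W))''+cW'+f(W)=0$ with the right limits. Regularity is not an issue, since $d$, $f$ are smooth and the changes of variable are $C^\infty$ diffeomorphisms.

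Hence the set of admissible speeds for~\eqref{eq:scalar_gen} coincides with that for~\eqref{eq:scalar_semi}. By (H1)-(H2), $F = d' f$ satisfies (H1'). The classical theory of monostable semilinear equations (Aronson--Weinberger~\cite{AW75}, phase plane analysis) then gives a $c^*>0$ such that fronts exist exactly for $c\ge c^*$; positivity of $c^*$ follows from $F'(0)=d'(0)f'(0)\ge 0$ and $\int_0^1 F>0$. This yields the ``if and only if'' of Theorem~\ref{th:wave_exists}.

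The only delicate point is checking that the reparametrization is a global diffeomorphism and respects the boundary behavior. This is exactly what the two-sided bound on $d'$ in (H1) ensures. Without it one would need $d'>0$ only on $[0,1]$, which still suffices here since $W$ takes values in $(0,1)$. The rest is bookkeeping.
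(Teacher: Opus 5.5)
Your proposal is correct and follows the paper's proof. It uses the same reparametrization of the profile variable (the paper's $\xi=\int_0^z d'(W_1(s))\,ds$ is the inverse of your map $d\xi/dz=1/d'(W(z))$) to carry traveling waves of the nonlinear-diffusion equation to and from those of the semilinear equation with $F=d'f$ at the same speed $c$, and then invokes the Aronson--Weinberger characterization of the minimal speed for the monostable semilinear problem.
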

There is a wide literature on traveling wave solutions of reaction-diffusion equations, tracing back in the semilinear case (i.e. with a standard laplacian diffusion) to the seminal papers~\cite{Fisher,KPP} and slightly more but not so recently~\cite{AW75}. These works in particular ensure the existence of a minimal wave speed~$c^*$ for monostable equations including~\eqref{eq:scalar_semi} under assumptions~(H1)-(H2). The above Theorem~\ref{th:wave_exists} can be deduced from the extensive collection of results of~\cite{GildingKersner} where the general nonlinear case is dealt with. Still, for the sake of completeness, a short sketch of the proof will be given below. Beforehand, we point out that the results of~\cite{GildingKersner} include more complicated situations including degenerate diffusion, e.g. the porous medium case where $d(w) = w^m$ with $m>1$ and therefore $d' (0) = 0$. The situation at hand here, i.e. with the specific functions~$d$ and $f$ resulting from system~\eqref{eq:cross_sys_zero} and introduced in Section~\ref{sec:reduc}, excludes any such degeneracy. In particular most of the well-established methods for the existence of traveling waves may be used (phase plane analysis as in~\cite{AW75}, integral techniques as in~\cite{FifeMcLeod,GildingKersner}, or intersection number arguments in the spirit of~\cite{DGM}). Due to it being widespread especially in physics, the porous medium case has been widely studied, and we refer for instance to~\cite{Kamin} for some large-time convergence result to the traveling wave.

\begin{proof}[Proof of Theorem~\ref{th:wave_exists}]
We briefly recall from~\cite{GildingKersner} (see also the references therein) a transformation connecting both equations~\eqref{eq:scalar_gen} and~\eqref{eq:scalar_semi}.

First, let us write the two corresponding traveling wave equations, namely
\begin{equation}\label{eq:scalar_semi_TW}
 \partial_z^2 W_1 + c \partial_z W_1 + d' (W_1 ) f(W_1) = 0,
\end{equation}
and
\begin{equation} \label{eq:scalar_gen_TW}
 \partial_\xi^2 ( d (W_2)) + c \partial_\xi W_2 + f(W_2) = 0.
\end{equation}
associated respectively with~\eqref{eq:scalar_semi} and~\eqref{eq:scalar_gen}. Notice that we made the variables~$z$ and~$\xi$ explicit and distinct, even though $W_1$ and $W_2$ only depend on a single variable, to prepare for the upcoming change of variables.

Due to (H1)-(H2), equation~\eqref{eq:scalar_semi_TW} is a monostable type problem in the sense that, for all $s \in (0,1)$, 
$$d' (0) f(0) = d'(1) f(1) = 0 < d' (s) f(s).$$
Therefore, by~\cite{AW75}, there exists $c^* >0$ such that a (unique up to shift) traveling wave solution of \eqref{eq:scalar_semi} connecting $0$ and $1$ with speed~$c$ exists if and only if $c \geq c^*$.

Let us now pick any such $c \geq c^*$, and let $W_1$ be the corresponding solution of~\eqref{eq:scalar_semi_TW}. Then we introduce the change of variables
\begin{equation*}
\xi = \varphi(z) = \int_0^z d' (W_1 (s)) ds.
\end{equation*}
Due to the positivity of $d'$, the function~$\varphi$ is a $C^\infty$-diffeomorphism from $\mathbb{R}$ into itself. Therefore we may rewrite~\eqref{eq:scalar_semi_TW} in terms of this new variable~$\xi$. After an elementary computation one may eventually find that 
$$\partial_\xi^2 ( d (W_1)) + c \partial_\xi W_1 + f(W_1) = 0,$$
where with a slight abuse of notation we still denoted by~$W_1$ the function after the change of variables. Moreover, since $\varphi (\pm \infty ) = \pm \infty$, this transformation preserves the asymptotics of~$W_1$. In other words, when seen as a function of this new variable, $W_1$ solves \eqref{eq:scalar_gen_TW} and it is a traveling wave solution of~\eqref{eq:scalar_gen} with speed~$c$.

In a converse way, one can show that \eqref{eq:scalar_gen} does not have any traveling wave with speed $c < c^*$. Indeed, assume that $W_2$ solves~\eqref{eq:scalar_gen_TW} with some~$c \in \mathbb{R}$. Then, defining
$$\xi = \int_0^z d' (W_2 (s)) ds,$$
one may rewrite the traveling wave equation for $W_2$ in the variable~$z$, and get a solution of~\eqref{eq:scalar_semi_TW}. It follows that $c \geq c^*$, which completes the proof.
\end{proof}

\begin{rmk}\label{rmk:uniqueness}
The proof of Theorem~\ref{th:wave_exists} also gives uniqueness up to translation of the traveling wave solutions of \eqref{eq:scalar_gen}, since the corresponding problem \eqref{eq:scalar_semi} has this property~\cite{AW75} and the change of variables is invertible.
\end{rmk}

Now that we have characterised~$c^*$ as also the minimal wave speed of the semilinear equation~\eqref{eq:scalar_semi}, some monotonicity properties immediately follow. Indeed, a comparison principle argument shows that the speed $c^*$ is nondecreasing with respect to the growth rate function~$d' f$. This leads us to the following result, whose proof we omit.
\begin{prop}
	Under assumptions~\textup{(H1)-(H2)}, define $c^* (d,f)$ the minimal traveling wave speed of \eqref{eq:scalar_gen} (equivalently, of~\eqref{eq:scalar_semi}), where the notation is meant to highlight the dependence on the diffusion and reaction functions $d$ and~$f$.
	
	Then for any $d_1$, $d_2$, $f_1$ and $f_2$ such that
	$$d_1 '  f_1 \leq d_2 ' f_2,$$
	we have that
	$$c^* (d_1 ,f_1) \leq c^* (d_2 ,f_2).$$
	In particular, due to $f$ and $d'$ being nonnegative, we find that 
	$$d_1 ' \leq d_2 ' \ \Rightarrow \ c^* (d_1, f) \leq c^* (d_2,f),$$
	$$f_1 \leq f_2 \ \Rightarrow \ c^* (d, f_1 ) \leq c^* (d, f_2).$$
\end{prop}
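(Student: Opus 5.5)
By Theorem~\ref{th:wave_exists}, $c^*(d,f)$ coincides with the minimal speed $c^*(F)$ of the semilinear monostable equation $\partial_t w = \partial_x^2 w + F(w)$ with $F = d'f$. It therefore suffices to prove that, for monostable $F_1 \leq F_2$ satisfying (H1'), we have $c^*(F_1) \leq c^*(F_2)$. The two particular implications then follow immediately. If $d_1' \leq d_2'$ and $f \geq 0$ on $[0,1]$, then $d_1' f \leq d_2' f$. If $f_1 \leq f_2$ and $d' > 0$, then $d' f_1 \leq d' f_2$.

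For the semilinear comparison, I will not argue with the Cauchy problem but directly with the traveling wave ODE, in phase-plane form as in~\cite{AW75}. A monotone front $W$ with speed $c$ can be parametrized by its value $w \in (0,1)$ through $p(w) := -W'(W^{-1}(w)) > 0$. Then $p$ solves
$$p\,\frac{dp}{dw} = c\,p - F(w), \qquad p(0^+) = 0 = p(1^-).$$
Recall from~\cite{AW75} that a front with speed $c$ exists if and only if the unstable-manifold trajectory leaving $(1,0)$ into $\{p>0\}$ stays in $\{p>0\}$ all the way down to $w = 0$. Equivalently, a front with speed $c$ exists if and only if there is a positive function $p$ on $(0,1)$ with $p(1)=0$ satisfying the differential inequality $p\,p' \leq c\,p - F$.

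Now let $c \geq c^*(F_2)$, and let $p_2$ be the profile associated with $F_2$. Since $F_1 \leq F_2$, we get
$$p_2\,p_2' = c\,p_2 - F_2 \leq c\,p_2 - F_1.$$
So $p_2$ is a supersolution-type barrier for the $F_1$ problem. Next I compare the $F_1$-trajectory $p_1$ leaving $(1,0)$ with $p_2$ near $w=1$, through the eigenvalue slopes at the saddle $(1,0)$. Since $F_1'(1) \geq F_2'(1)$, the unstable direction for $F_1$ is no steeper than that for $F_2$, hence $p_1 \geq p_2$ near $w = 1$. A first-touching argument in the ODE $p' = c - F/p$ then keeps $p_1 \geq p_2 > 0$ on $(0,1)$. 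By the existence criterion, a front with speed $c$ exists for $F_1$, so $c \geq c^*(F_1)$. Taking $c = c^*(F_2)$ gives the claim.

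The main obstacle is the comparison near the endpoints, in particular the tangency at $w=1$ when $F_1'(1) = F_2'(1)$. I will handle it by perturbation: replace $F_2$ by $F_2 + \eta\, w(1-w)$, run the argument, and let $\eta \to 0$, using continuity of $c^*$ with respect to $F$ (itself a consequence of the same phase-plane characterization).

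Alternatively, one can use a parabolic comparison: the front of $F_2$ spreads at least as fast as the solution with the smaller reaction term, combined with the spreading-speed characterization of $c^*$ from~\cite{AW75}.
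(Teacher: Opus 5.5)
The reduction to the semilinear problem with $F=d'f$ via Theorem~\ref{th:wave_exists}, and the derivation of the two particular cases, are fine and agree with the paper. The phase-plane step has a genuine error, however: both the existence criterion and the direction of the comparison are reversed.

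First, the criterion. In the $(w,p)$ plane, the trajectory leaving $(1,0)$ can never reach $\{p=0\}$ at some $w_0\in(0,1)$. Indeed $dp/dw=c-F(w)/p\to-\infty$ as $p\downarrow 0$ with $F(w_0)>0$, so $p$ increases as $w$ decreases. Equivalently, in the $(W,W')$ plane the flow crosses $\{W'=0,\ 0<W<1\}$ only downwards. Hence ``stays in $\{p>0\}$ down to $w=0$'' holds for every $c$, including $c<c^*$. The real obstruction is that the trajectory arrives at $w=0$ with $p(0^+)>0$, i.e. $W$ becomes negative at a finite $z$. The correct criterion is $p(0^+)=0$. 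For the same reason, your ``equivalent'' barrier criterion (positive $p$ with $p(1)=0$ and $pp'\le cp-F$) is satisfied by the saddle trajectory itself for every $c$, so it characterizes nothing.

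Second, the direction. From $F_1\le F_2$ and $F_i(1)=0$ you get $|F_1'(1)|\le|F_2'(1)|$, hence $\mu_1\le\mu_2$, where $\mu_i=\frac12\bigl(-c+\sqrt{c^2+4|F_i'(1)|}\bigr)$ and $p_i(w)\sim\mu_i(1-w)$ as $w\to1$. So ``no steeper'' means $p_1\le p_2$ near $w=1$, not $p_1\ge p_2$. Likewise, $p_2p_2'\le cp_2-F_1$ makes $p_2$ an \emph{upper} barrier when integrating from $w=1$ towards $w=0$. At a first contact point, $\frac{d}{d(-w)}(p_2-p_1)=(F_2-F_1)/p\ge 0$, which prevents $p_1$ from crossing above $p_2$ but not from dropping below it. So the first-touching step cannot produce $p_1\ge p_2$. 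Even if it did, a lower bound on $p_1$ says nothing about $p_1(0^+)$. The two errors cancel in your write-up, but the argument as stated is invalid.

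It is repaired by reversing the inequalities. Prove $p_1\le p_2$ on $(0,1)$, using your perturbation for the case $F_1'(1)=F_2'(1)$. Then $0<p_1\le p_2$ and $p_2(0^+)=0$ show that the $F_1$-trajectory reaches the origin, so a front with speed $c$ exists for $F_1$.

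Your closing alternative is correct. The front of $F_2$ is a supersolution of the $F_1$ problem, so $F_1$-solutions spread no faster than $c^*(F_2)$, whereas by \cite{AW75} they spread at speed $c^*(F_1)$ for suitable initial data. This is presumably the ``comparison principle argument'' the paper invokes while omitting the proof, and it would have been the safer main line.
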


\subsection{Linear determinacy}

It is well-known that the minimal speed of the monostable problem~\eqref{eq:scalar_semi} (or more generally of~\eqref{eq:scalar_semiF}) may or may not be linearly determined. More precisely, the value of $c^*$ may or may not coincide with the explicit
$$c^*_{lin} := 2 \sqrt{d'(0) f'(0)},$$% = 2 \sqrt{F'(0)}.$$
This is the smallest value of~$c$ such that there is a special solution of the form $e^{-\lambda (x-ct)}$, with $\lambda \in \mathbb{R}$, of the linearized equation at~$0$,
$$\partial_t w = \partial_x^2 w + d' (0) f '(0) w ,$$
or equivalently, of the linearization of the original problem, 
$$\partial_t w = d' (0) \partial_x^2 w + f'(0) w.$$
A sufficient (but not necessary) condition is the so-called KPP condition. We sum this up in the next proposition, whose proof is a straightforward application of classical results such as those of~\cite{AW75,KPP}.
\begin{prop}\label{prop:scalar_kpp}
Assume that
\begin{equation}\label{eq:kpp_hyp}
\forall p \in (0,1) , \quad 
d'(p) \frac{f(p)}{p}\leq d'(0) f'(0) .
\end{equation}
Then
$$c^* = c^*_{lin},$$
where $c^*$ denotes the minimal traveling wave speed for~\eqref{eq:scalar_gen}.

Otherwise, it is only known that
$$c^* \geq c^*_{lin}.$$
\end{prop}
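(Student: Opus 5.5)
The plan is to transfer everything to the semilinear problem~\eqref{eq:scalar_semi}, using Theorem~\ref{th:wave_exists}: the minimal speed of~\eqref{eq:scalar_gen} equals the minimal speed of $\partial_t w = \partial_x^2 w + F(w)$ with $F := d' f$. Under (H1)-(H2), $F$ is smooth and monostable, i.e. satisfies (H1'), and $F(0)=0$. Hence $F'(0) = d''(0) f(0) + d'(0) f'(0) = d'(0) f'(0)$. Thus $c^*_{lin} = 2\sqrt{F'(0)}$ is exactly the classical linear speed of the semilinear equation, and hypothesis~\eqref{eq:kpp_hyp} reads $F(p)/p \le F'(0)$ for all $p\in(0,1)$. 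This is precisely the KPP condition for $F$.

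For the lower bound $c^* \ge c^*_{lin}$, which holds in general, I would argue directly on the traveling wave equation~\eqref{eq:scalar_semi_TW}. Let $W_1$ be a front with speed $c$, decreasing from $1$ to $0$. Near $z=+\infty$ it solves $W_1'' + cW_1' + (F'(0)+o(1))W_1 = 0$ with $W_1>0$. If $c^2 < 4F'(0)$, the characteristic roots of the linearization are complex. A Sturm comparison argument then forces $W_1$ to oscillate around $0$ at $+\infty$, contradicting positivity. Equivalently, in the phase plane $(W_1,W_1')$ the origin is a stable focus, and no trajectory can approach it while staying in $\{W_1>0\}$. Hence $c \ge 2\sqrt{F'(0)} = c^*_{lin}$.

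For the upper bound under~\eqref{eq:kpp_hyp}, I would show that a front exists at speed $c = c^*_{lin}$ (fronts exist for all speeds above by~\cite{AW75}). I would follow the phase-plane construction of~\cite{AW75}. Write $P = W_1'$ and consider the unstable manifold of the saddle $(1,0)$ entering the region $\{0<W<1,\ P<0\}$. Then use the line $P = -\frac{c}{2} W$ as a barrier. On this line the vector field satisfies
$$\frac{dP}{dW} = -c - \frac{F(W)}{P} = -c + \frac{2F(W)}{cW} \le -c + \frac{2F'(0)}{c} = -\frac{c}{2}$$
when $c^2 = 4F'(0)$, using~\eqref{eq:kpp_hyp}. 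So the trajectory coming from $(1,0)$ is trapped in the triangle bounded by $P=0$, $W=1$ and the barrier. It must therefore reach $(0,0)$ without crossing $W=0$, which yields a monotone front. Transporting it back through the change of variables $\xi = \int_0^z d'(W_1)$ gives a wave of~\eqref{eq:scalar_gen} with speed $c^*_{lin}$, so $c^* = c^*_{lin}$.

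The main obstacle is the barrier step. One has to check the direction of the flow relative to the line precisely, including the endpoint case of equality, and rule out the trajectory leaving through $W=1$ or $P=0$. This is where~\eqref{eq:kpp_hyp} is used in its exact form. The alternative is simply to quote the classical KPP result~\cite{KPP,AW75} applied to $F$, which I expect the authors do.
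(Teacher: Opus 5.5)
Your proposal is correct and follows the paper's route. The paper also reduces via Theorem~\ref{th:wave_exists} to the semilinear equation with $F=d'f$, notes that $F'(0)=d'(0)f'(0)$ because $f(0)=0$ (so that \eqref{eq:kpp_hyp} is exactly the KPP condition $F(p)/p\le F'(0)$), and then simply cites the classical results of~\cite{KPP,AW75}. You go further by sketching those results, the Sturm-oscillation bound for $c<2\sqrt{F'(0)}$ and the barrier $P=-\frac{c}{2}W$ at $c^2=4F'(0)$, and both sketches are sound; the one nit is that $(1,0)$ is a genuine saddle only when $F'(1)<0$ (true in the paper's application), and otherwise you should use the unique trajectory leaving $(1,0)$ along the center direction.
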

The fact that the KPP condition~\eqref{eq:kpp_hyp} is not necessary for the linear determinacy of the minimal traveling wave speed can be observed in the following example from~\cite{HadelerRothe}, that is equation
$$\partial_t w = \partial_x^2 w + (w-w^2 )(1+aw), $$
where $a \geq 0$. Indeed, this equation is of the KPP type (i.e. $(w-w^2)(1+aw) \leq w$) only when $a \leq 1$, yet the minimal traveling speed is linearly determined (i.e. equal to $2$) if and only if $a \leq 2$. More generally, as it was more recently exhibited in~\cite{An23}, the minimal traveling wave speed of an equation of the form
\begin{equation*}%\label{eq:scalar_semiAn}
\partial_t w = \partial_x^2 w + (w- A(w))(1+ \chi A'(w)),
\end{equation*}
under some appropriate assumptions on the function $A$, is linearly determined if and only if $\chi \leq 1$, which is consistent with the previous example where $A(w) = w^2$. It turns out that the traveling wave equation resulting from our starting problem~\eqref{eq:cross_sys_zero}, that is~\eqref{eq:scalar_semi} with $d$ and $f$ as in \eqref{def:dd_tw}-\eqref{def:ff_tw}, may fall into that scope. We will come back to this in more details in the next section.

Therefore, the following result (which sums up Propositions~3.3 and~A.2 from~\cite{An23}) will be of most use to us:
\begin{prop}\label{prop:linear_determ}
	Consider equation~\eqref{eq:scalar_semiF} with the reaction term
	$$F(w) = F'(0) (w-A(w))(1 + \chi A'(w)),$$
	where $s \mapsto A(s)$ is smooth and satisfies
			$$A(0) = A' (0) = 0 = A(1) -1 , \quad \mbox{ and } \ \ \forall s \in (0,1), \ A' (s) >0  > A(s) -s.$$
	Then the minimal wave speed for~\eqref{eq:scalar_semiF} is given by
			$$ c^* = \left\{ 
			\begin{array}{ll}
				c^*_{lin} := 2 \sqrt{F'(0)} & \quad \mbox{ if } \  \chi \in [0,1],\vspace{3pt}\\
				\sqrt{F'(0)} \left( \sqrt{\chi} + \frac{1}{\sqrt{\chi}} \right) &  \quad \mbox{ if } \ \chi > 1 . \\
			\end{array}
			\right.
			$$ 
\end{prop}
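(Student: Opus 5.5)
The plan is to follow the Hadeler--Rothe idea of looking for fronts whose phase-plane trajectory is explicit. After rescaling $x$ (which multiplies every speed by $\sqrt{F'(0)}$), I would assume $F'(0)=1$, so that $F(w)=(w-A(w))(1+\chi A'(w))$ and the claim becomes $c^*=2$ for $\chi\in[0,1]$ and $c^*=\sqrt\chi+1/\sqrt\chi$ for $\chi>1$.

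I would work in the phase plane of $W''+cW'+F(W)=0$. A monotone front corresponds to a trajectory $p=p(w)<0$ on $(0,1)$ with $p(0)=p(1)=0$, solving
$$p\,p'+c\,p+F(w)=0 .$$
I would test the curves $p_\mu(w):=-\mu\,(w-A(w))$, which are negative on $(0,1)$ because $A(s)<s$ there. A direct computation gives
$$p_\mu p_\mu'+c\,p_\mu+F=(w-A)\left[\mu^2-c\mu+1+A'(w)(\chi-\mu^2)\right].$$
Two cases follow from this identity.

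For $\chi>1$, I would take $\mu=\sqrt\chi$ and $c=\sqrt\chi+1/\sqrt\chi$, so the bracket vanishes identically. The ODE $W'=-\sqrt\chi\,(W-A(W))$ then has a solution connecting $1$ (at $-\infty$) to $0$ (at $+\infty$), and this is an explicit front with that speed. Near $0$ it decays like $e^{-\sqrt\chi z}$, since $A'(0)=0$. The characteristic roots at $0$ are $1/\sqrt\chi<\sqrt\chi$, so the front enters $0$ along the strong stable direction. To conclude it has the minimal speed, I would use the classical phase-plane fact of Aronson--Weinberger: the unstable manifold of the saddle $(1,0)$ depends monotonically on $c$, and for $c'<c$ it lies strictly above the strong-direction orbit near $w=0$. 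Since below $c^*_{lin}=2$ no front exists at all, for $c'\in[2,c)$ that orbit must either cross $p=0$ at some $w>0$ or spiral. In either case no front exists, and hence $c^*=c$.

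For $\chi\in[0,1]$, I would take $c=2$ and $\mu=1$. Then the bracket equals $A'(w)(\chi-1)\le 0$, so $p_1$ is a supersolution barrier: $p_1p_1'+2p_1+F\le 0$. Comparing slopes $dp/dw=-2-F/p$, I would show that the unstable manifold of $(1,0)$ at $c=2$, which leaves $(1,0)$ into $\{p<0\}$, cannot cross the curve $p_1$ from above and cannot reach $p=0$ inside $(0,1)$, because $F>0$ there. It is therefore trapped between $p_1$ and $p=0$ and must converge to $(0,0)$. This gives a front with speed $2$, and combined with the general bound $c^*\ge c^*_{lin}=2$ from Proposition~\ref{prop:scalar_kpp}, it yields $c^*=2$.

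The main obstacle is the minimality claim for $\chi>1$. It requires carefully justifying, via monotonicity in $c$ of the trajectory from $(1,0)$, that a front entering along the strong eigendirection can only occur at the minimal speed. This step needs the sign comparisons $p_{c'}>p_c$ and a local analysis at $(0,0)$. The trapping at $(1,0)$ for $\chi\le1$ also needs a check of slopes at $w=1$: there $p_1'(1)=\mu(A'(1)-1)$, and I would compare this with the unstable eigenvalue.
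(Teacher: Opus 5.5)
Your construction for $\chi>1$ (the ansatz $p_\mu=-\mu(w-A(w))$ with $\mu=\sqrt{\chi}$, $c=\sqrt{\chi}+1/\sqrt{\chi}$, fast decay at $0$) is exactly the paper's. For $\chi\in[0,1]$ you take a different route. The paper builds the pushmi-pullyu wave at $\chi=1$, $c=c^*_{lin}$, then uses the monotone dependence of the phase-plane trajectories on $\chi$ to show that $\chi=1$ is the unique transition between the pulled and pushed regimes. You instead use the $\chi=1$ curve $p_1$ directly as a barrier at $c=2$. This is correct and more self-contained. For $\chi<1$ the bracket is $(\chi-1)A'(w)<0$ on $(0,1)$, so the slope comparison is strict. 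The check at $w=1$ you anticipate also goes through. Write $a=A'(1)-1>0$ (nondegenerate saddle). At $c=2$ the unstable eigenvalue is $\lambda_u=-1+\sqrt{1+a+\chi a(1+a)}$, and $\lambda_u<a=p_1'(1)$ holds exactly when $\chi<1$. So the unstable manifold leaves $(1,0)$ above $p_1$, and for $\chi=1$ it is $p_1$ itself. What the paper's route gives in exchange is the structural fact that the minimal wave is pushmi-pullyu precisely at $\chi=1$.

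The minimality argument for $\chi>1$, however, is backwards as written and would not work. In your convention $p<0$, the unstable manifold $p_{c'}$ of $(1,0)$ at a speed $c'<c$ lies strictly \emph{below} $q(w):=-\sqrt{\chi}\,(w-A(w))$. That is, $p_{c'}<p_c$, not $p_{c'}>p_c$. To see this:
\begin{itemize}
\item On $(0,1)$ one has $q\,q'+c'q+F=(c'-c)q>0$, hence $q'<-c'-F/q$.
\item Since $\lambda_u$ decreases in $c$, $p_{c'}$ starts below $q$ near $w=1$.
\item By the first point, it can then never touch $q$.
\end{itemize}
The failure modes you list are also not available. A trajectory in $\{p<0\}$ cannot reach $p=0$ at any $w\in(0,1)$, because $p'=-F(W)<0$ there. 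And there is no spiralling for $c'\geq 2$, since the origin is a node.

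The actual obstruction is the decay rate at $0$. From $p_{c'}<q$ we get $-p_{c'}(w)/w>\sqrt{\chi}\,(1-A(w)/w)\to\sqrt{\chi}$. On the other hand, any trajectory converging to the node enters tangent to an eigendirection, so $-p/w\to\lambda_\pm(c')\leq\lambda_+(c')<\lambda_+(c)=\sqrt{\chi}$. Hence $p_{c'}$ leaves the strip through $\{w=0,\ p<0\}$, meaning $W$ becomes negative, and no front with speed $c'\in[2,c)$ exists. This is the content of the trichotomy the paper invokes: a wave with fast decay at a speed above $c^*_{lin}$ must be the minimal one. With this correction your proof is complete.
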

\begin{proof}
	We give a sketch of the proof, for the sake of completeness. The first part of the argument is a classification of the monostable traveling wave~$W^*$ with minimal speed~$c^*$. By linearizing the traveling wave equation
	\begin{equation}\label{eq:TW_ode}
	W '' + c W'  + F(W) =0,
	\end{equation}
	around the equilibrium~$0$, one finds that necessarily 
		$$W^* (x) \sim (B x + C) e^{-\lambda x},$$
	as $x \to +\infty$, where $\lambda$ is a real root of
		\begin{equation}\label{eq:dispersion}
		\lambda^2 - c^* \lambda + F'(0) = 0,
		\end{equation}
	which in particular entails that $c^*  \geq 2 \sqrt{F'(0)}$, and $B$ may be non-zero only if $\lambda$ is a double root (i.e. $c^* = c^*_{lin} = 2\sqrt{F'(0)}$).
	
	Furthermore, it turns out that only the following three situations may occur:	
	\begin{itemize}
		\item pushed case: $c^* > c^*_{lin}$ and $\lambda$ is the larger root of~\eqref{eq:dispersion};
		\item pulled case: $c^* =c^*_{lin}$ and the traveling wave decays as $(Ax + B ) e^{-\sqrt{F'(0)} x}$ as $x \to +\infty$, with $A>0$;
		\item pushmi-pullyu: $c^* = c^*_{lin}$ and the traveling wave decays as $B e^{-\sqrt{F'(0)} x}$ as $x \to +\infty$, with $B>0$.
	\end{itemize}
	Usually, in the literature `pulled' refers to both the pulled and the pushmi-pullyu cases. Here the distinction is important because the pushmi-pullyu case allows one to spot the transition between the pulled and pushed cases.
	
	The new information that this trichotomy brings is that, when $c^* > c^*_{lin}$, then the traveling wave necessarily selects the fast decaying exponential. In short, this is a by-product of e.g. the argument of~\cite{AW75}, which consists in investigating~\eqref{eq:TW_ode} in the phase plane $(W,W')$. Then solving for a traveling wave is equivalent to finding a trajectory connecting~$(1,0)$ to~$(0,0)$ without crossing the vertical lines $\{ W= 0\}$ and $\{W =1\}$. First, one may check that $(1,0)$ is saddle point, thus there is a unique trajectory originating from it that may produce a traveling wave. This trajectory, let us denote it by $\mathcal{T}_1$, depends continuously and monotonically with respect to the speed parameter $c$. This, together with the fact that it cannot cross the horizontal axis between $W= 0$ and $W=1$, is the reason why there are traveling waves above some minimal speed $c^*$. 
	
	Next, when $c \geq c^*_{lin}$, then $(0,0)$ is a stable node. There exists a family of trajectories going to $(0,0)$ from the lower right of the phase plane. Among these, there is a lowest one, let us denote it by $\mathcal{T}_2$, which behaves either as
			$$B e^{-\sqrt{F'(0)}x},$$
			if $c = c^*_{lin}$, or as
			$$B e^{-\lambda_+ x},$$
			if $c > c^*_{lin}$, where $\lambda_+$ is the larger root of $\lambda^2 - c\lambda + F'(0) =0$. In any case, a traveling wave with speed~$c$ exists if and only if $\mathcal{T}_1$ either lies above or coincides with~$\mathcal{T}_2$ in the phase plane. Due to both trajectories depending continuously and monotonically (but in opposite ways) on~$c$, one eventually finds that, when $c= c^* > c^*_{lin}$, then $\mathcal{T}_1$ and $\mathcal{T}_2$ coincide.\smallskip
			
	All the above is true as long as $F$ satisfies the monostable (H1'). Let us now make use of its form
		\begin{equation}\label{eq:FF}
		F(w) = F'(0) (w-A(w))(1+ \chi A' (w)).
		\end{equation}
			As shown in~\cite{An23}, this is in fact a generic assumption, though in general it may not be possible to exhibit the function~$A$ explicitly.	
			
	First notice that the function $F$ depends continuously and monotonically on the parameter~$\chi$, hence so do the aforementioned trajectories $\mathcal{T}_1$ and $\mathcal{T}_2$. Therefore, fixing $c = c^*_{lin}$ in~\eqref{eq:TW_ode}, one eventually finds at most one value $\chi = \chi^*$ such that $\mathcal{T}_1$ and $\mathcal{T}_2$ coincide (i.e. the minimal wave is pushmi-pullyu), $c^* = c^*_{lin}$ for any $\chi \in (0,\chi^*)$, and $c^* > c^*_{lin}$ for any $\chi > \chi^*$. In other words, the pushmi-pullyu marks the exact transition between the pushed and pulled cases. This argument extends to any monotonic family of functions~$F$ (see also~\cite{XiaoZhou1,XiaoZhou2}). In particular, whether $\chi^*$ actually exists as a positive and finite value is not shown so far, and we will address it below.
	
	Finally, the actual interest of $F$ being of the form~\eqref{eq:FF} is that it allows us to construct some traveling waves explicitly. By looking at their decay rate and according to the trichotomy above, the formula for the speed will follow. This construction relies on rewriting the traveling wave equation in terms of the slope function
			$$\eta (w) = -W' (W^{-1} (w)).$$
	Notice that, according to the above phase plane argument, a traveling wave $W$ is necessarily decreasing so that it is indeed invertible. We then reach the differential equation
		$$\eta' \eta - c \eta + F = 0,$$
	on the interval $(0,1)$. Due to \eqref{eq:FF}, this suggests to take the ansatz
		$$ \eta : w \mapsto  \lambda (w -A(w)).$$
	Indeed, plugging this into the previous equation, we find
		$$[ \eta ' \eta - c \eta + F ] (w) = (w-A (w)) \times \left[ \lambda^2 - c \lambda + F'(0) + A' (\chi F'(0) - \lambda^2 )\right],$$
	which leads us to the simple system
		$$ \left\{ \begin{array}{l}
			\lambda^2 - c\lambda + F'(0) = 0,  \vspace{3pt} \\
			 \lambda^2 = \chi F'( 0). \end{array} \right.$$
	Assume first that $\chi =1$. Then we pick $c = 2 \sqrt{F'(0)}$, $\lambda= \sqrt{F'(0)}$ and the slope function
		$$ \eta : w \mapsto \sqrt{F'(0)} (w - A (w)).$$
	To recover a traveling wave, we solve
		$$W' = -\eta (W), \quad W(0) = \frac{1}{2},$$
	which indeed provides a solution of \eqref{eq:TW_ode} with the correct asymptotics, thanks to the positivity of $\eta$ between $0$ and $1$. Furthermore, recalling that $A (0) = A' (0) = 0$, we have that $A (w) = O (w^2)$ as $w \to 0$, and then
	$$W (x) \sim B e^{-\sqrt{F'(0)}x},$$ 
	as $x\to +\infty$, for some $B>0$. In other words, it is a pushmi-pullyu wave. According to the above discussion, the speed $c^*$ is linearly determined when $\chi \leq 1$, and non linearly determined when $\chi >1$.
	
	Next, when $\chi > 1$, we may pick
			$$ \lambda= \sqrt{\chi F'(0)}, \quad c = \frac{F'(0) + \lambda^2}{\lambda} = \sqrt{F'(0)} \left( \sqrt{\chi} + \frac{1}{\sqrt{\chi}}\right),$$
	so that $\lambda$ is the larger root of $\lambda^2 - c\lambda + F'(0) =0$. In the same way, we find a traveling wave with speed $\sqrt{F'(0)} \left( \sqrt{\chi} + \frac{1}{\sqrt{\chi}}\right)$ with fast exponential decay, which according to the above trichotomy implies that it is the traveling wave with minimal speed. This concludes the proof.			
\end{proof}

\section{Computing the minimal traveling wave speed}
\label{sec:comp}
	
In this section, we consider the special case when
$$g (u) = g_1 - g_2 u, \quad h(u) = \gamma \beta u - \delta ,$$
and further investigate the minimal traveling wave speed~$c^*$ of \eqref{eq:cross_sys_epsilon} and how it depends on the parameters. In the case when $\varepsilon = 0$, we will establish analytically an explicit formula for~$c^*$. While the issue when $\varepsilon >0$ remains open from a theoretical point of view, we will address it numerically.

	\subsection{Theoretical investigation of the minimal wave speed of~\eqref{eq:cross_sys_zero}}
	
Let us turn to the proof of Theorem~\ref{th:main_tw2}. Since statement~$(i)$ follows from Theorem~\ref{th:main_tw} and an elementary computation, here we only deal with statement~$(ii)$. 

From the previous sections, provided that $U_{initial}$ is such that
$$g_1 - g_2 U_{initial} >0,$$
we already know that there is continuum of traveling waves, with some minimal speed~$c^*$, which is also the minimal speed of a more standard semilinear and monostable scalar equation
$$\partial_t u = \partial_x^2 u + d' (u) f(u),$$
where
$$d' (u) = g_1 - g_2 U_{initial} e^{-\beta u}, \quad f(u) = \gamma U_{initial} (1 -e^{-\beta u}) - \delta u.$$
In general, this speed may or may not be linearly determined. Let us further investigate how it depends on the parameters of the problem.

In fact, it already follows from the previous section that the minimal speed $c^*$ is monotonic with respect to some of the parameters. Let us briefly summarize what we know below:
\begin{itemize}
\item the minimal speed is increasing with respect to $g_1$, which only appears in the diffusion;
\item similarly, it is decreasing with respect to $g_2$;
\item the minimal speed is increasing with respect to $\gamma$, and decreasing with respect to $\delta$; both parameters only appear in the reaction term;
\item since both diffusion and reaction are increasing with respect to~$\beta$, it follows that the minimal speed is increasing with respect to $\beta$.
\end{itemize}
In this section, we will additionally prove that:
\begin{itemize}
\item the minimal speed is nondecreasing with respect to~$U_{initial}$, which comes from the linear or nonlinear determinacy argument addressed below. 
\end{itemize}
As we will see in Figure \ref{fig:2}, the linear speed, which we denote by $c^*_{lin}$, is not monotonic with respect to $U_{initial}$. However, the linear determinacy of $c^*$ holds precisely in the parameter range where $U_{initial} \mapsto c^*_{lin}$ is increasing.\\

First we briefly recall the KPP type condition~\eqref{eq:kpp_hyp} and rewrite this  in our context. Second, we will see that we are able to find an explicit necessary and sufficient condition for the linear determinacy of the minimal traveling wave speed of~\eqref{eq:cross_sys_zero} when $g,h$ are given by~\eqref{def:example1}.
\begin{prop}
Assume that, for all $p >0$,
\begin{equation}\label{eq:kpp_cross}
(g_1 - g_2 U_{initial} e^{-\beta p} ) \left[\gamma U_{initial} \frac{1- e^{-\beta p}}{p} - \delta \right] \leq (g_1 - g_2 U_{initial}) (\gamma \beta U_{initial} - \delta) .
\end{equation}
Then
$$c^* = c^*_{lin} (U_{initial}) = 2 \sqrt{(g_1 - g_2 U_{initial}) (\gamma \beta U_{initial} - \delta) },$$
where $c^*$ denotes the minimal traveling wave speed for~\eqref{eq:cross_sys_zero}.
\end{prop}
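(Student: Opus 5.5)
This is a direct application of Proposition~\ref{prop:scalar_kpp} to the scalar reduction of Section~\ref{sec:reduc}, after normalization. The plan has three steps.

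\textbf{Step 1: the reduced problem.} By Theorem~\ref{th:main_tw} and its proof (via Proposition~\ref{prop:reduced} and Theorem~\ref{th:wave_exists}), the minimal speed $c^*$ of~\eqref{eq:cross_sys_zero} coincides with the minimal speed of~\eqref{eq:scalar_gen}. Here $d$ and $f$ are given by~\eqref{def:dd_tw}-\eqref{def:ff_tw}, which in the present case read
$$d'(w) = g_1 - g_2 U_{initial} e^{-\beta w}, \qquad f(w) = \gamma U_{initial}(1-e^{-\beta w}) - \delta w .$$
The monostable zero is $K = W_+$.

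\textbf{Step 2: normalization.} To match (H1)-(H2), which are stated with zeros at $0$ and $1$, rescale by setting $\tilde w = w/K$. Take $\tilde d(\tilde w) = d(K\tilde w)/K$ and $\tilde f(\tilde w) = f(K\tilde w)/K$. Then $\tilde d'(\tilde w) = d'(K \tilde w)$ and
$$\frac{\tilde f(\tilde w)}{\tilde w} = \frac{f(K\tilde w)}{K \tilde w}.$$
So the product $d'(p)f(p)/p$ is invariant under this rescaling, and the traveling wave speeds are unchanged. Hypothesis (H1) on $[0,K]$ holds because $d'$ is increasing in $w$ with $d'(0) = g_1 - g_2 U_{initial} > 0$, and $d'$ is bounded above by $g_1$. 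If one insists on a global infimum, $d$ can be modified outside $[0,K]$ without affecting waves, which take values in $(0,K)$.

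\textbf{Step 3: checking \eqref{eq:kpp_hyp}.} Compute $f'(0) = \gamma\beta U_{initial} - \delta$ and $d'(0) = g_1 - g_2 U_{initial}$. Hence the right-hand side of~\eqref{eq:kpp_hyp} equals $(g_1 - g_2 U_{initial})(\gamma\beta U_{initial}-\delta)$. The left-hand side at $p\in(0,K)$ equals
$$(g_1 - g_2 U_{initial} e^{-\beta p})\left[\gamma U_{initial}\frac{1-e^{-\beta p}}{p} - \delta\right].$$
Hypothesis~\eqref{eq:kpp_cross} for all $p>0$ therefore contains exactly~\eqref{eq:kpp_hyp} after the rescaling. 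Proposition~\ref{prop:scalar_kpp} then yields $c^* = c^*_{lin} = 2\sqrt{d'(0)f'(0)}$, which is the stated formula.

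The only delicate point is the bookkeeping in Step~2, namely that Proposition~\ref{prop:scalar_kpp} is stated under the normalization $K=1$. It has to be checked that both $c^*$ and $c^*_{lin}$ are invariant under the rescaling, and that the restriction $p\in(0,K)$ is covered by the assumption for all $p>0$. Both facts are immediate from the computations above.
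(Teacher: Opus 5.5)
Your proposal is correct and follows the same route as the paper. The paper states that no proof is needed beyond a direct application of Proposition~\ref{prop:scalar_kpp} to the reduced equation with $d'(w) = g_1 - g_2 U_{initial} e^{-\beta w}$ and $f(w) = \gamma U_{initial}(1-e^{-\beta w}) - \delta w$. Your rescaling to the normalization $K=1$, and your remarks on (H1) restricted to $[0,K]$, accurately supply bookkeeping that the paper leaves implicit.
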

There is no proof necessary as this is a direct application of Proposition~\ref{prop:scalar_kpp}. Notice that, for any $p>0$, we have
$$\gamma U_{initial} \frac{1- e^{-\beta p}}{p} - \delta  < \gamma \beta U_{initial} - \delta  ,$$
while 
$$g_1 - g_2 U_{initial} e^{-\beta p} >  g_1 - g_2 U_{initial}. $$
Roughly speaking, there is a conflict between the KPP reaction term, which helps satisfy the condition, and the coupled diffusion term, which acts in the opposite direction. This was to be expected, because linear determinacy means that the propagation is ``pulled'' by the leading edge of the front, where~$u$ is larger. The fact that the diffusion rate of~$v$ decreases with respect to~$u$ goes against this.

Before we proceed, let us point out that \eqref{eq:kpp_cross} can be turned into the equivalent yet more explicit single inequality
\begin{equation}\label{explicit1}
 3 \beta g_2 \gamma U_{initial} - 2 g_2 \delta - g_1 \beta \gamma \leq 0.
 \end{equation}We omit the tedious computation. A possible rough sketch is to rewrite~\eqref{eq:kpp_cross} as $\alpha (p) \leq \alpha '(0) p$, where 
$$\alpha (p) = (g_1 - g_2 U_{initial} e^{-\beta p}) (\gamma U_{initial} (1 - e^{-\beta p}) - \delta p ).$$
Inequality~\eqref{explicit1} means that $\alpha '' (0) \leq 0$, which is clearly necessary for $\alpha (p) \leq \alpha '(0)p$ to hold true. Conversely, it may be shown that $\alpha$ is concave in a neighborhood of $0$, beyond which $\alpha$ becomes negative.

\begin{prop}\label{prop:pulled}
	The minimal traveling wave speed $c^*$ of \eqref{eq:cross_sys_zero} satisfies
		$$c^* = c^*_{lin} (U_{initial}) = 2 \sqrt{(g_1-g_2 U_{initial})(\gamma \beta U_{initial} - \delta)}$$
	if and only if
		$$U_{initial} \leq U^* := \frac{1}{2} \left(\frac{g_1}{g_2} + \frac{\delta}{\gamma \beta} \right).$$
	Moreover, for all $U_{initial} > U^*$, then
		$$ c^* = c^*_{lin} (U^*)  =   g_1 \sqrt{ \frac{\gamma \beta}{g_2}}- \delta \sqrt{ \frac{g_2}{\gamma \beta}} .$$
\end{prop}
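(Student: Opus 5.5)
The plan is to cast the reduced semilinear equation into the form required by Proposition~\ref{prop:linear_determ} and then read off both cases. By Section~\ref{sec:reduc} and Theorem~\ref{th:wave_exists}, $c^*$ is the minimal speed of $\partial_t w = \partial_x^2 w + F(w)$, where $F = d' f$ and (writing $U = U_{initial}$)
$$d'(w) = g_1 - g_2 U e^{-\beta w}, \qquad f(w) = \gamma U (1-e^{-\beta w}) - \delta w .$$
The key step is to find $A$ and $\chi$ with $F(w) = F'(0)\,(w - A(w))\,(1+\chi A'(w))$, where $F'(0) = (g_1-g_2U)(\gamma\beta U-\delta)$. For the first factor I will simply define $A$ by $f = (\gamma\beta U-\delta)(w-A)$, that is,
$$A(w) = \frac{\gamma\beta U w - \gamma U(1-e^{-\beta w})}{\gamma\beta U-\delta}, \qquad A'(w) = \frac{\gamma\beta U (1-e^{-\beta w})}{\gamma\beta U - \delta}.$$
The diffusion factor must then match the second factor. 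Writing
$$d'(w) = (g_1-g_2U)\left[1 + \frac{g_2 U(1-e^{-\beta w})}{g_1 - g_2 U}\right],$$
and noting that $1-e^{-\beta w}$ is exactly the shape of $A'$, this holds with
$$\chi = \frac{g_2(\gamma\beta U-\delta)}{\gamma\beta(g_1-g_2U)} > 0 .$$
This identification of $\chi$ is the one non-obvious step; everything after it is bookkeeping.

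Next I will check the hypotheses of Proposition~\ref{prop:linear_determ}, after normalizing the zero $K = W_+$ of $f$ to $1$. Set $\tilde F(s) = F(Ks)/K$ and $\tilde A(s) = A(Ks)/K$. Then $\tilde F'(0) = F'(0)$, $\tilde A'(s) = A'(Ks)$, and $\tilde F(s) = F'(0)\,(s-\tilde A(s))\,(1+\chi\tilde A'(s))$ with the same $\chi$. A rescaling $x \mapsto \sqrt{K}\,x$-type change of amplitude only rescales $w$, so the minimal speed of $\tilde F$ coincides with that of $F$. The required properties follow from the formulas:
\begin{itemize}
\item $\tilde A(0) = 0$, and $\tilde A'(0) = 0$ because $A'(0) = 0$;
\item $\tilde A(1) = 1$ is equivalent to $f(K) = 0$;
\item $\tilde A' > 0$ on $(0,1)$ because $\gamma\beta U > \delta$;
\item $\tilde A(s) < s$ on $(0,1)$ is equivalent to $f > 0$ on $(0,K)$, which was established in Section~\ref{sec:reduc}.
\end{itemize}

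Finally, I will apply Proposition~\ref{prop:linear_determ} in two cases. The condition $\chi \le 1$ is equivalent to $g_2\gamma\beta U - g_2\delta \le \gamma\beta g_1 - \gamma\beta g_2 U$, that is, $U \le U^*$; in that case $c^* = 2\sqrt{F'(0)} = c^*_{lin}(U)$. Conversely, if $U > U^*$ then $\chi > 1$, so $c^* > c^*_{lin}(U)$, which gives the ``only if'' part. In the case $\chi > 1$, I will compute
$$\sqrt{F'(0)\chi} = \sqrt{\frac{g_2}{\gamma\beta}}\,(\gamma\beta U-\delta), \qquad \sqrt{\frac{F'(0)}{\chi}} = \sqrt{\frac{\gamma\beta}{g_2}}\,(g_1-g_2U).$$
Their sum is $g_1\sqrt{\gamma\beta/g_2} - \delta\sqrt{g_2/(\gamma\beta)}$, since the terms in $U$ cancel. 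The last check is that this constant equals $c^*_{lin}(U^*)$, which is direct: at $U^*$ one has $\chi = 1$, so $\sqrt{F'(0)}\,(\sqrt{\chi} + 1/\sqrt{\chi}) = 2\sqrt{F'(0)}$ there, and the expression is independent of $U$.
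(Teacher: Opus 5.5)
Your proof is correct and follows the paper's own argument: the same $A$, the same $\chi$, an appeal to Proposition~\ref{prop:linear_determ}, and the same cancellation of the $U$-terms when $\chi>1$. You also supply details the paper leaves implicit, namely the normalization $K=W_+\mapsto 1$ via $\tilde F(s)=F(Ks)/K$, the verification of the hypotheses on $A$, and the check that the constant equals $c^*_{lin}(U^*)$. One wording fix: that normalization acts on the amplitude only, since $\tilde w=w/K$ solves $\partial_t\tilde w=\partial_x^2\tilde w+\tilde F(\tilde w)$ with $x$ and $t$ untouched, so the speed is unchanged. A rescaling of $x$, as your phrase ``$x\mapsto\sqrt{K}\,x$-type'' suggests, would change the speed.
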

The critical value $U^*$ of $U_{initial}$ for the linear determinacy is exactly the one which produces the maximum of the function $U_{initial} \mapsto c^*_{lin}$. Thus the monotonicity of $U_{initial} \mapsto c^*$ immediately follows from this proposition.
\begin{proof}
	We again recall that $c^*$ is also the minimal traveling wave speed for the semilinear monostable equation
		\begin{equation*}
			\partial_t u = \partial_x^2 u + (g_1 - g_2 U_{initial} e^{-\beta u})(\gamma U_{initial} (1-e^{-\beta u}) - \delta u).
		\end{equation*}
	Letting 
	\begin{eqnarray*}
		F ' (0) & =  & (g_1 - g_2 U_{initial}) (\gamma \beta U_{initial} - \delta),\\
		A (u) &= & \frac{\gamma U_{initial}}{\gamma \beta U_{initial} - \delta} \left(\beta u - (1- e^{-\beta u}) \right),\\
		\chi & = & \frac{g_2}{\gamma \beta} \frac{\gamma  \beta U_{initial} - \delta}{g_1 - g_2 U_{initial}},
	\end{eqnarray*}
	we may use Proposition~\ref{prop:linear_determ} (see also~\cite{An23}). We infer that the minimal speed is linearly determined if and only if $\chi \leq 1$, which is equivalent to
		$$U_{initial} \leq \frac{1}{2} \left(\frac{g_1}{g_2} + \frac{\delta}{\gamma \beta} \right).$$
	Now assume that $U_{initial} > U^*$, so that the minimal speed is nonlinearly determined and, by Proposition~\ref{prop:linear_determ}, 
	$$c^* = \sqrt{F'(0)} \left( \sqrt{\chi} + \frac{1}{\sqrt{\chi}} \right).$$
	This rewrites here as
	$$c^* = (g_1 - g_2 U_{initial}) \sqrt{\frac{\gamma \beta}{g_2}} + (\gamma \beta U_{initial} - \delta ) \sqrt{\frac{g_2}{\gamma \beta}} = g_1 \sqrt{ \frac{\gamma \beta}{g_2}}- \delta \sqrt{ \frac{g_2}{\gamma \beta}} , $$
	which concludes the proof.
	\end{proof}

\subsection{Numerical investigation of traveling wave solutions}

So far we have, on the theoretical side, rigorously classified the traveling wave solutions of~\eqref{eq:cross_sys_zero}. 
In this section, we consider, with the aid of numerical computations, further properties of traveling waves and related solutions of~\eqref{eq:cross_sys_zero}, as well as of the more general system~\eqref{eq:cross_sys_epsilon} where both species diffuse.

Hereafter, the functions $g(u)$ and $h(u)$ are set as~\eqref{def:example1}, and the following parameter values are fixed throughout this section: 
\[
\delta=1.0, \gamma=1.0, \beta=1.5, g_1=2.0, g_2=1.0.
\]
However, our observations below have been confirmed in larger parameter ranges. Here, we note that $\frac{1}{2} \left(\frac{g_1}{g_2}+\frac{\delta}{\gamma\beta}\right)=\frac 4 3$ under this choice of parameters.

\subsubsection{Convergence to a traveling wave}

In Theorem~\ref{thm:cauchy}, we proved the existence of a unique classical solution for the Cauchy problem associated with~\eqref{eq:cross_sys_zero}. It seems reasonable to expect that, for large classes of initial data, this solution will converge to one of the traveling waves exhibited in Theorems~\ref{th:main_tw} and \ref{th:main_tw2}. Indeed, this has already been shown to be the case when there is no coupling in the diffusion~\cite{DucrotGiletti}. In this subsection, we numerically confirm this. However, we cannot deal with \eqref{eq:cross_sys_zero} on the whole line in the numerical computation. Instead, \eqref{eq:cross_sys_zero} is approximately considered on the large interval $(0,200)$ under the zero flux boundary conditions on $x=0$ and $200$. 

Figure~\ref{fig:conv-1} shows the behavior of the solution of \eqref{eq:cross_sys_zero} when the initial data is set as 
\[
u(x,0)=1.2,\quad \text{and} \quad 
v(x,0)=
\begin{cases}
0.5 & x<1,\\
0 & \text{otherwise.}
\end{cases}
\]
As time evolves, one can see that the solution of the initial value problem converges to a traveling wave solution with speed around $c=1.6$. 
In particular, it is confirmed from the right figure in Figure~\ref{fig:conv-1} that the solution propagates with an asymptotically constant profile and constant wave speed. 
\begin{figure}[htbp]
    \centering
    \includegraphics[width=140mm]{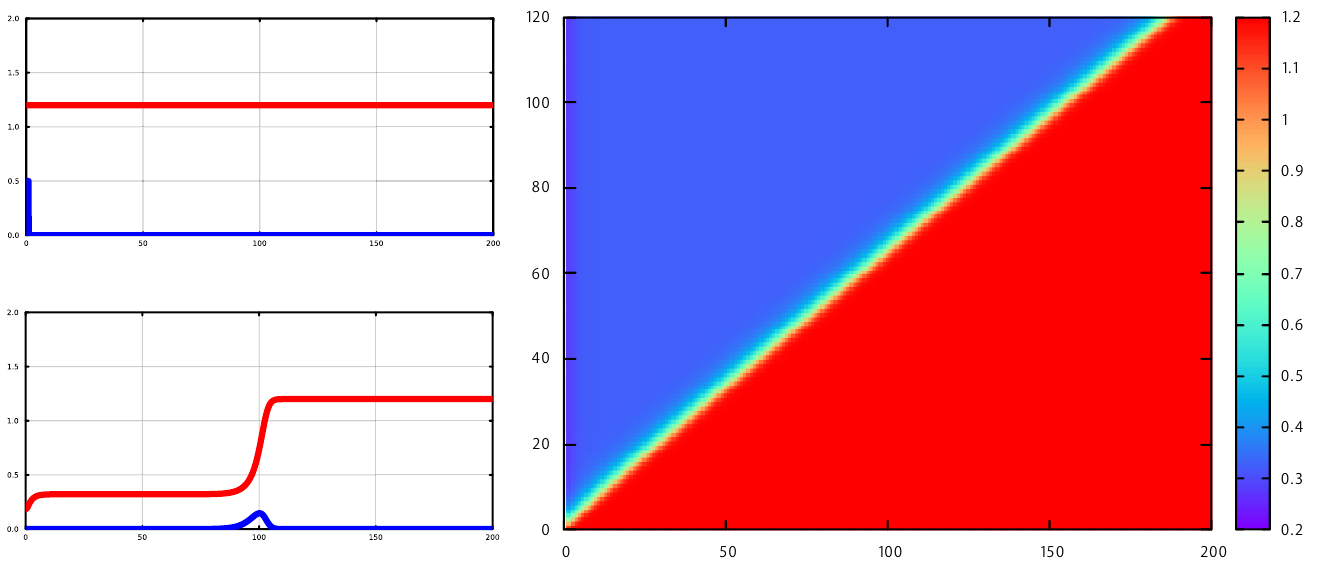}
    \caption{[Upper left] Initial data. $u(x,0)=1.2$ and $v(x,0)=0.5$ for $x<1$ and $v(x,0)=0$ for $x>1$. 
    [Lower left] A snapshot of the solution at $t=66$. In these figures, the red curve and the blue curve mean $u$ and $v$, respectively. [Right] The time evolution of the solution for $u$. The horizontal axis and the vertical axis represent space $x$ and time $t$, respectively. }
    \label{fig:conv-1}
\end{figure}
In this case, $U_{final}$ is approximately $0.321$. It seems that this traveling wave solution is the one with minimal wave speed since the value $c=1.6$ coincides with $c^*$ obtained from Theorem~\ref{th:main_tw2} in this parameter setting.

Next, we change the initial data as follows: 
\[
u(x,0)=1.2,\quad \text{and} \quad 
v(x,0)=\exp\left(\frac{-15+\sqrt{161}}{8}x\right).
\]
Our numerical computation suggests that the resulting solution also converges to a traveling wave, but a different one from the previous case, as shown in Figure~\ref{fig:conv-2}.  
\begin{figure}[htbp]
    \centering
    \includegraphics[width=140mm]{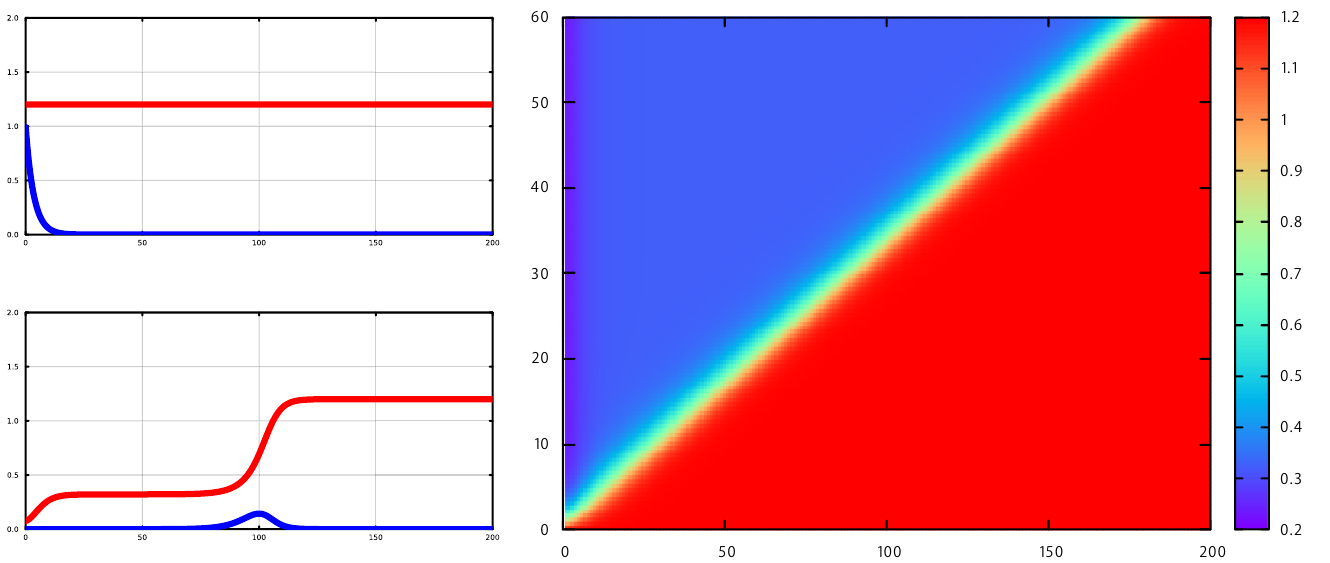}
    \caption{[Upper left] Initial data. $u(x,0)=1.2$ and $v(x,0)=\exp(\frac{-15+\sqrt{161}}{8}x)$. 
    [Lower left] A snapshot of the solution at $t=33$. In these figures, the red curve and the blue curve mean $u$ and $v$, respectively. [Right] The time evolution of the solution for $u$. The horizontal axis and the vertical axis represent space $x$ and time $t$, respectively. }
    \label{fig:conv-2}
\end{figure}
This solution propagates at around $c=3$, which is faster than the speed in Figure~\ref{fig:conv-1}. 
This implies that the asymptotic speed is determined by the exponential decay rate of the initial density of~$v$, which reflects the Fisher-KPP features of~\eqref{eq:cross_sys_zero}. 
Actually, numerical computations suggest that if the initial function for $v$ is given by $v(x,0)=e^{\eta x}$, then
\[
c=
\begin{cases}
2\sqrt{(g_1-g_2 U_{initial})(\gamma\beta U_{initial}-\delta)} & \text{ if }U_{initial}\leq \frac{1}{2}\left( \frac{g_1}{g_2}+\frac{\delta}{\gamma\beta} \right) \text{ and }\eta\leq \eta_1, \vspace{3pt}\\
g_1\sqrt{\frac{\gamma\beta}{g_2}}-\delta\sqrt{\frac{\delta}{\gamma\beta}} & \text{ if }U_{initial}> \frac{1}{2}\left( \frac{g_1}{g_2}+\frac{\delta}{\gamma\beta} \right) \text{ and }\eta\leq \eta_2, \vspace{3pt}\\
\displaystyle -\frac{\gamma\beta U_{initial}-\delta+(g_1-g_2 U_{initial})\eta^2}{\eta}  & \text{ otherwise, }
\end{cases}
\]
where $$\eta_1=-\sqrt{\frac{\gamma\beta U_{initial}-\delta}{g_1-g_2 U_{initial}}},$$ 
and $$\eta_2=-\frac{1}{2}\frac{g_1\sqrt{\gamma\beta/g_2}-\delta\sqrt{\delta/\gamma\beta}}{g_1-g_2 U_{initial}}+\frac{1}{2}\sqrt{\left(\frac{g_1\sqrt{\gamma\beta/g_2}-\delta\sqrt{\delta/\gamma\beta}}{g_1-g_2 U_{initial}}\right)^2-4\frac{\gamma\beta U_{initial}-\delta}{g_1-g_2 U_{initial}}}.$$
As a matter of fact, from our arguments in Sections~\ref{sec:reduc} and~\ref{sec:tw}, one may investigate the connection between the exponential decay~$\eta$ and the speed~$c$ of a traveling wave solution of~\eqref{eq:cross_sys_zero}, and find the same dispersion relation.

To conclude, according to numerics, the traveling wave appearing in the large-time profile of a solution strongly depends on the exponential decay rate of the initial density of the infected/predator~$v$, but not so much on the initial function of susceptible/prey~$u$. Furthermore, this property is also numerically observed for the initial value problem associated with~\eqref{eq:cross_sys_epsilon}, that is, even if the $u$-diffusion rate~$\varepsilon$ is positive.

\subsubsection{Traveling wave solutions for \eqref{eq:cross_sys_zero} and \eqref{eq:cross_sys_epsilon}}

We obtain from Theorems~\ref{th:main_tw} and \ref{th:main_tw2} that there exist traveling wave solutions with speeds $c\geq c^*(U_{initial})$, for $\frac{\delta}{\gamma\beta}<U_{initial}<\frac{g_1}{g_2}$, where
$$c^* (U_{initial}) = \left\{ 
\begin{array}{ll}
	2 \sqrt{(g_1- g_2 U_{initial}) ( \gamma \beta U_{initial} - \delta)} & \quad \mbox{ if } \  U_{initial} \leq \frac{1}{2} \left(\frac{g_1}{g_2} + \frac{\delta}{\gamma \beta} \right) ,\vspace{3pt}\\
		g_1 \sqrt{\frac{\gamma \beta}{g_2}} - \delta \sqrt{\frac{\delta}{\gamma \beta}}&  \quad \mbox{ if } \ U_{initial}   \geq  \frac{1}{2} \left(\frac{g_1}{g_2} + \frac{\delta}{\gamma \beta} \right). \\
\end{array}
\right.
$$
Figure~\ref{fig:2} illustrates the existence range of traveling wave solutions of \eqref{eq:cross_sys_zero} via numerical computations, with respect to parameters~$U_{initial}$ and~$c$. 
\begin{figure}[htbp]
    \centering
    \includegraphics[width=80mm]{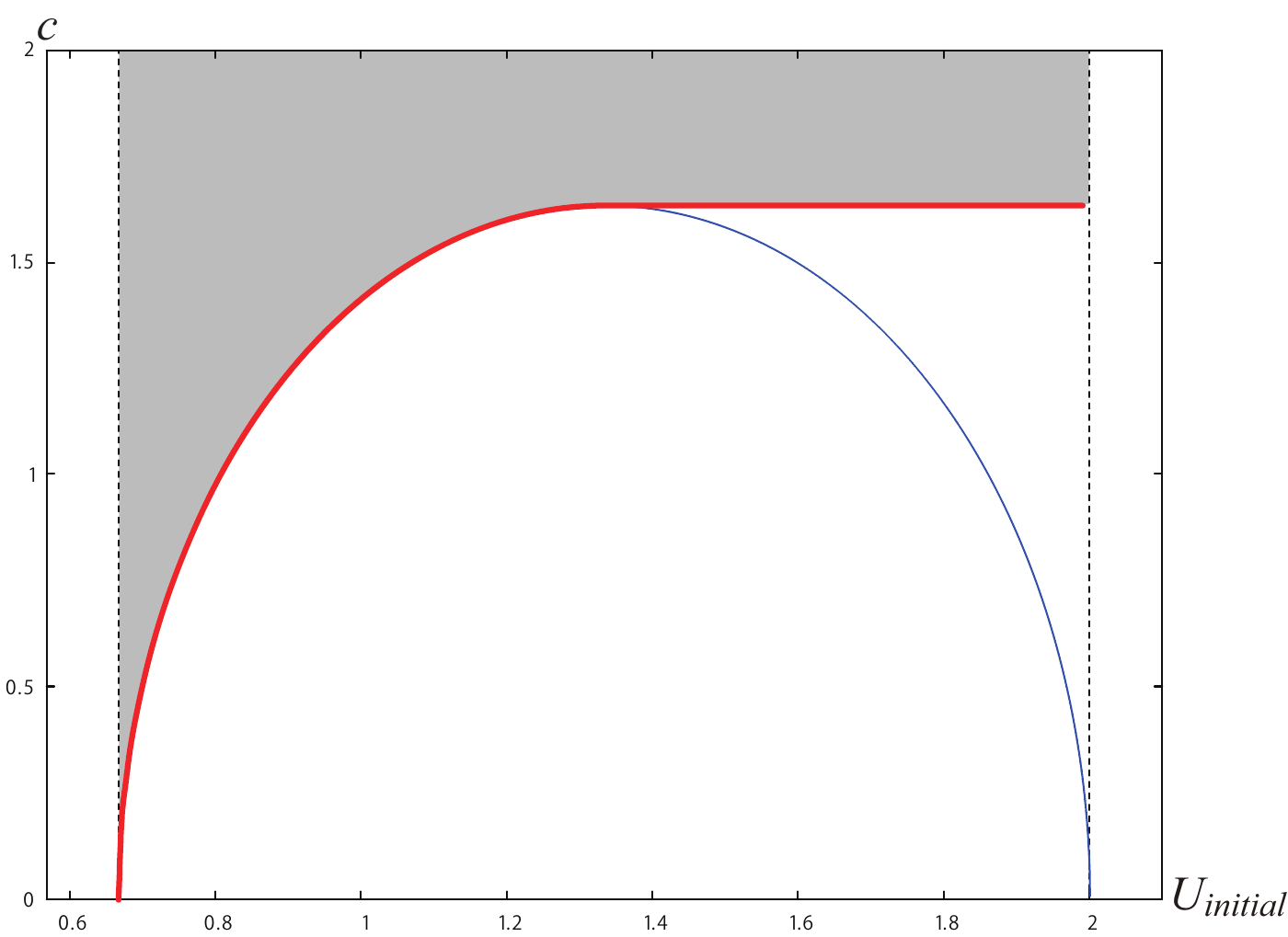}
    \caption{Existence region of traveling wave solutions when $\delta=1.0$, $\gamma=1.0$, $\beta=1.5$, $g_1=2.0$ and $g_2=1.0$, so that $\frac{1}{2} \left( \frac{g_1}{g_2} + \frac{\delta}{\gamma \beta} \right) = \frac{4}{3}$.
    The horizontal axis and the vertical axis respectively represent $U_{initial}$ and $c$. The red curve means minimum wave speed $c=c^*(U_{initial})$. There exist traveling wave solutions in the gray region.}
    \label{fig:2}
\end{figure}
The gray colored area indicates the existence region of traveling wave solutions, and the red curve represents the minimal wave speed for each~$U_{initial}$. 
Also, the blue curve is the graph of the linear speed $$c^*_{lin} =2 \sqrt{(g_1- g_2 U_{initial}) ( \gamma \beta U_{initial} - \delta)}.$$
This numerical result completely coincides with the analytical result obtained in Theorem~\ref{th:main_tw2}.

Figure~\ref{fig:profile-1} exhibits profiles of traveling wave solutions with different values of $c$. 
\begin{figure}[htbp]
    \begin{center}
    \begin{tabular}{cc}
    \includegraphics[width=60mm]{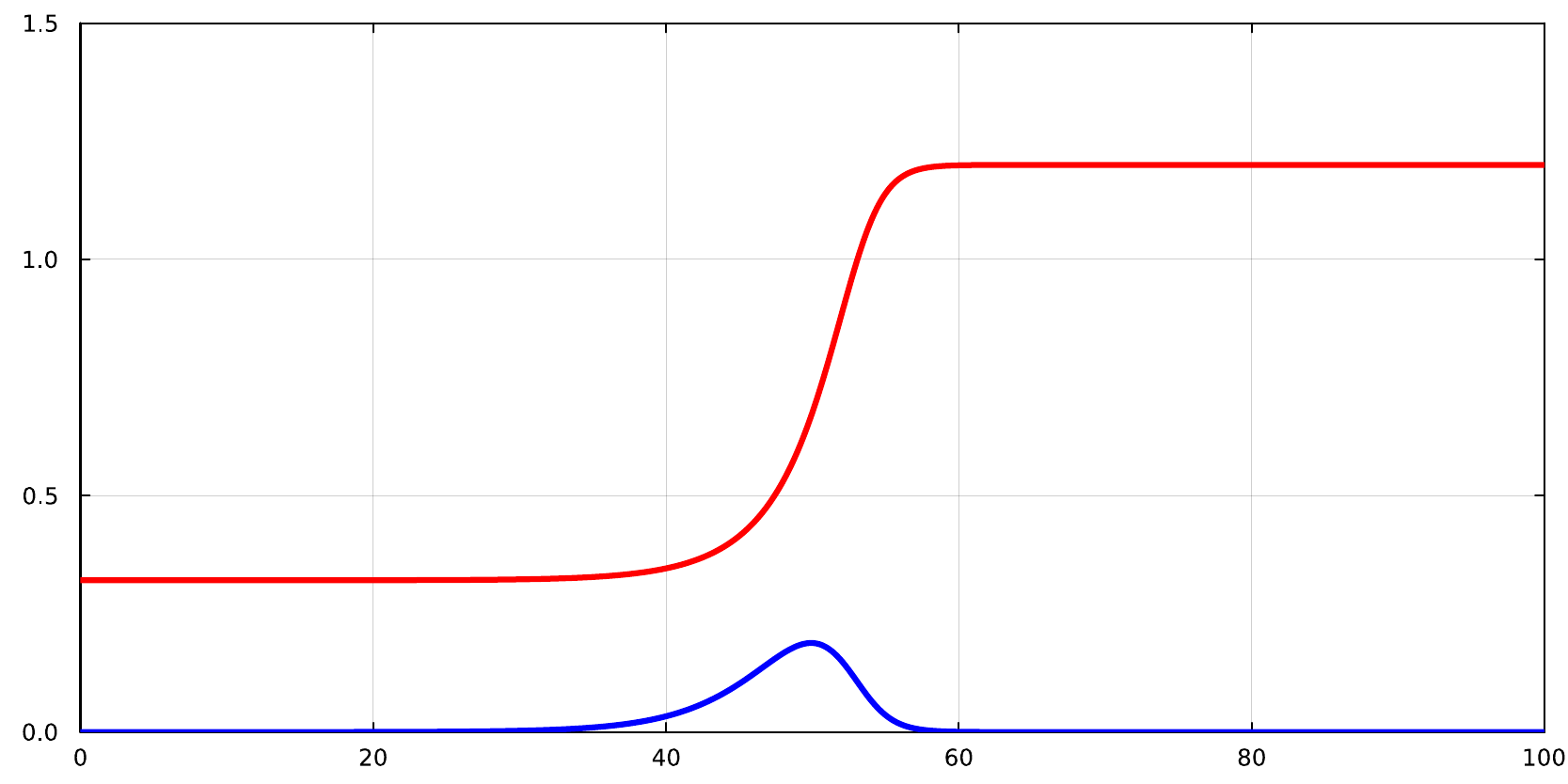}&
    \includegraphics[width=60mm]{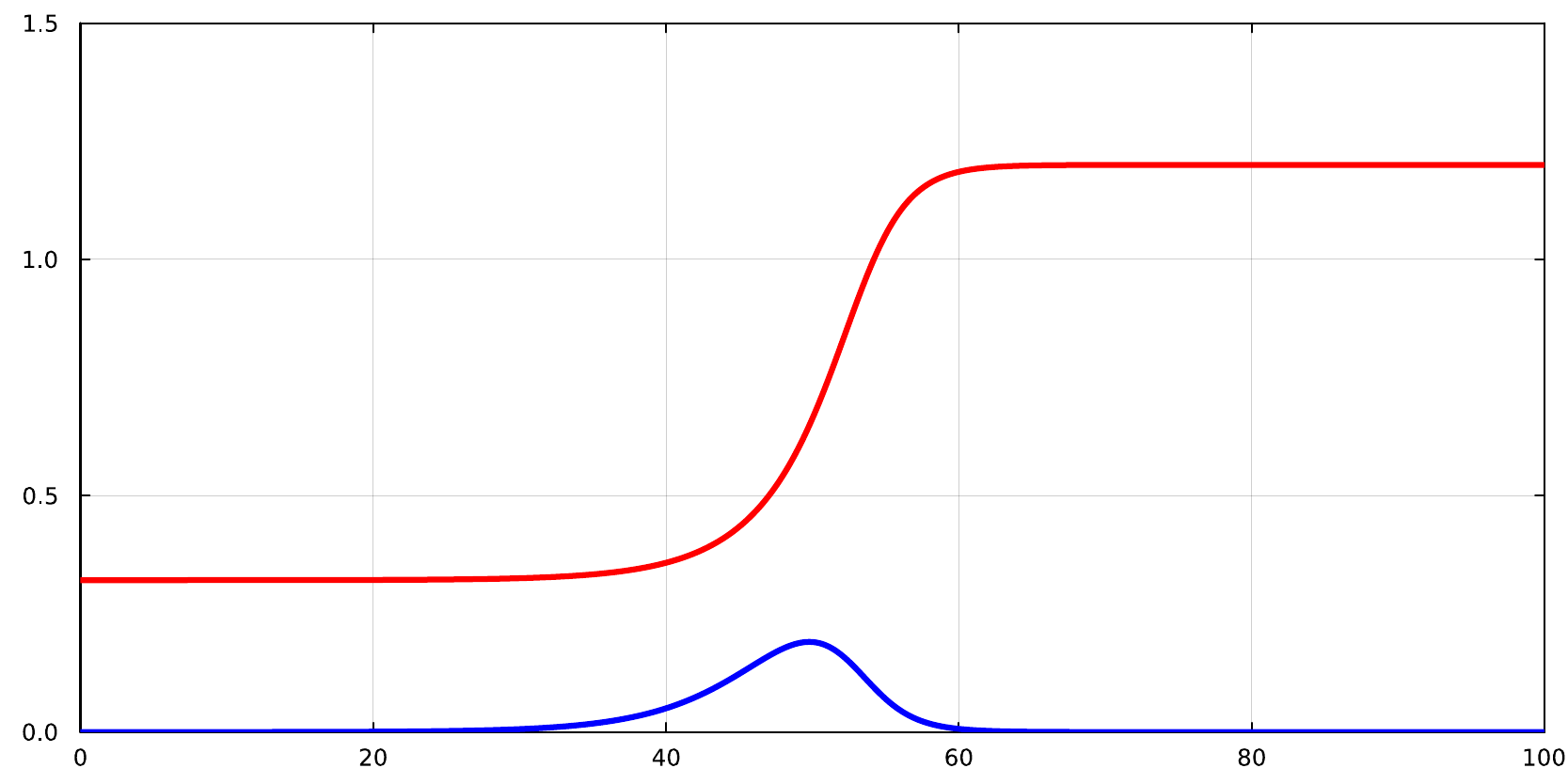}\\
    $c=1.599691$ & $c=2.0$
    \end{tabular}
    \end{center}
    \caption{Profiles of traveling wave solutions with minimal wave speeds $c=1.599691$ and $c=2.0$. The parameter values are $U_{initial}=1.2$ and the others are the same as the one in Figure~\ref{fig:2}. The resulting left limit $U_{final}=0.321084$ in both cases.}
    \label{fig:profile-1}
\end{figure}
We notice from these figures that, for each~$U_{initial}$ satisfying $\frac{\delta}{\gamma\beta}<U_{initial}<\frac{g_1}{g_2}$, then the left limit~$U_{final}$ is the same for any $c\geq c^*(U_{initial})$. 
This is consistent with Theorems~\ref{th:main_tw} and~\ref{th:main_tw2}$(i)$ where the formulas for $U_{final}$ were independent of the wave speed $c$. 
We represent the relation between~$U_{initial}$ and~$U_{final}$ in Figure~\ref{fig:3}. 
\begin{figure}[htbp]
    \centering
    \includegraphics[width=80mm]{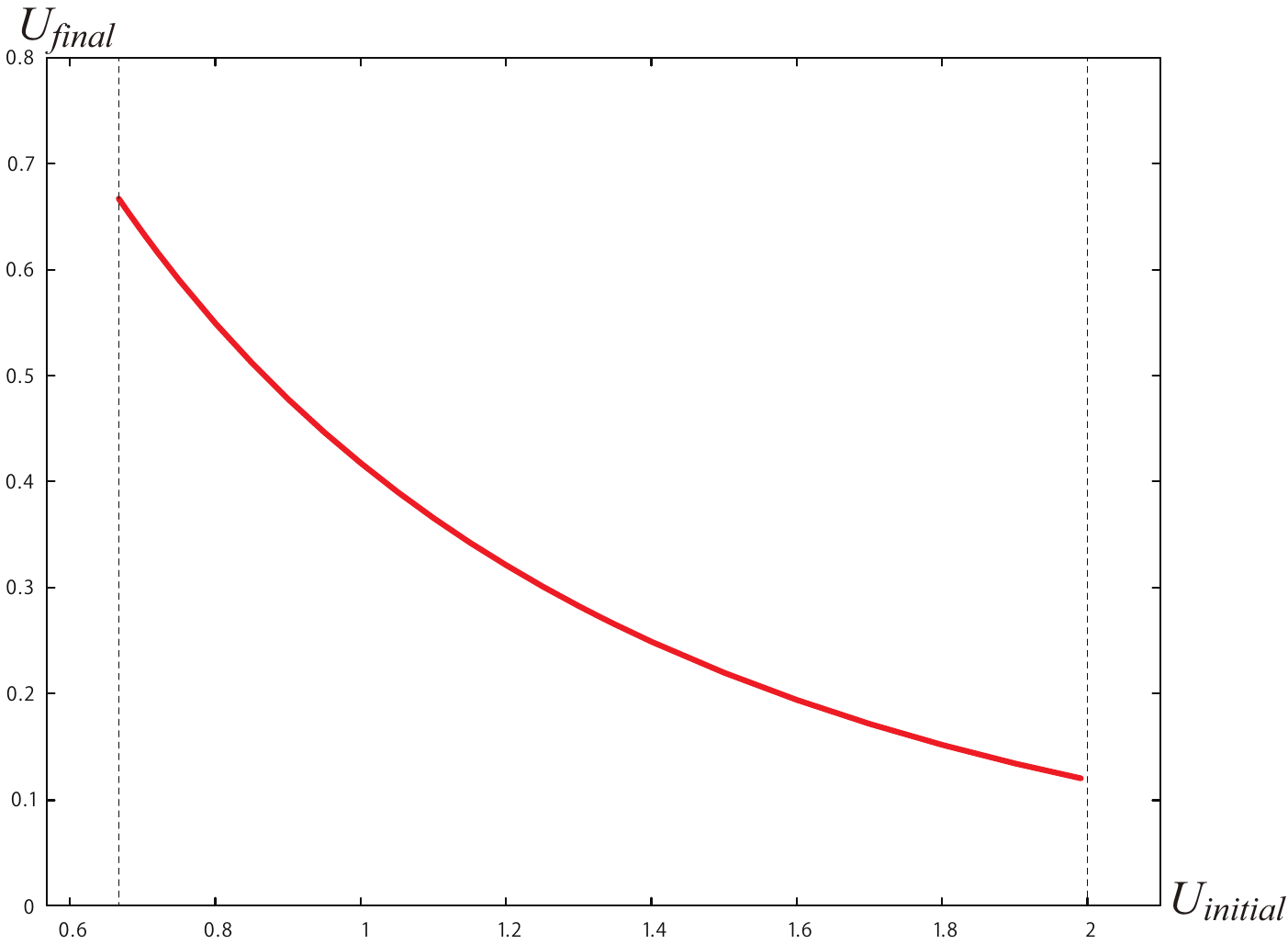}
    \caption{The relation between $U_{initial}$ and $U_{final}$. 
    The parameter values are the same as those in Figure~\ref{fig:2}. 
    The horizontal axis and the vertical axis respectively represent $U_{initial}$ and $U_{final}$. For each fixed $U_{initial}$, the left limit $U_{final}$ has the same value for traveling wave solutions with any wave speed $c\geq c^*(U_{initial})$. }
    \label{fig:3}
\end{figure}
The numerical result shows that $U_{final}$ is monotonically decreasing on $U_{initial}$. \\

Before we proceed, we briefly explain how we numerically obtained traveling wave solutions for \eqref{eq:cross_sys_zero}. These are positive solutions of~\eqref{eq:cross_sys_zero} which depend only on a moving coordinate $z=x-ct$, i.e. $\overline{u}(z)=u(x,t)$, $\overline{v}(z)=v(x,t)$, and hence solve 
\[
\left\{
\begin{aligned}
-cu'&=-\beta uv,\\
-cv'&=[(g_1-g_2 u)v]''+\gamma\beta uv-\delta v,
\end{aligned}
\right.
\quad -\infty<z<\infty, 
\]
where we already dropped the overline. 
The boundary conditions are
$(u,v)(-\infty)=(U_{final},0)$ and $(u,v)(\infty)=(U_{initial},0)$. We are looking for both $c$ and $U_{final}$ for which such a solution exists.

Letting $V=(g_1-g_2 u)v$ and $V'=\tilde{V}$, we have a system of first order ordinary differential equations
\begin{equation*}%\label{eq:numerics}
\left\{
\begin{aligned}
u'&=\frac{\beta u V}{c(g_1-g_2 u)},\\
V'&=\tilde{V},\\
\tilde{V}'&=-c\frac{\tilde{V}}{g_1-g_2 u}-\frac{g_2\beta u V^2}{(g_1-g_2 u)^3}-\frac{\gamma \beta u V -\delta V}{g_1-g_2 u},
\end{aligned}
\right.
\end{equation*}
with boundary conditions $(u,V,\tilde{V})(-\infty)=(U_{final},0,0)$ and $(u,V,\tilde{V})(\infty)=(U_{initial},0,0)$. We numerically look for a trajectory $(u,V,\tilde{V})$ connecting these two equilibria, which exists if the stable manifold of $(U_{final},0,0)$ intersects the unstable manifold of $(U_{initial},0,0)$. 
The linearized matrix around any steady state~$(\zeta,0,0)$ is given by 
\[
\begin{pmatrix}
0 & A_{12}(\zeta) & 0\\
0 & 0 & 1\\
0 & A_{32}(\zeta) & A_{33}(\zeta)
\end{pmatrix},
\]
where $A_{12}(\zeta)=\beta\zeta/(c(g_1-g_2\zeta))$, $A_{32}(\zeta)=-(\gamma\beta\zeta-\delta)/(g_1-g_2 \zeta)$ and $A_{33}(\zeta)=-c/(g_1-g_2\zeta)$. The eigenvalues of the linearized matrix are $0$ and $\lambda^{\pm}(\zeta)$, where 
\[
\lambda^{\pm}(\zeta) = \frac{1}{2}\left( -\frac{c}{g_1-g_2 \zeta}
\pm
\sqrt{\left(\frac{c}{g_1-g_2 \zeta}\right)^2-4\frac{\gamma\beta \zeta-\delta}{g_1-g_2 \zeta}}\right), 
\]
%We define 
%\[
%\begin{aligned}
%\lambda_p(\zeta) &= \frac{1}{2}\left( -\frac{c}{g_1-g_2 \zeta}+\sqrt{\left(\frac{c}{g_1-g_2 \zeta}\right)^2-4\frac{\gamma\beta \zeta-\delta}{g_1-g_2 %\zeta}}\right),\\
%\lambda_m(\zeta) &= \frac{1}{2}\left( -\frac{c}{g_1-g_2 \zeta}-\sqrt{\left(\frac{c}{g_1-g_2 \zeta}\right)^2-4\frac{\gamma\beta \zeta-\delta}{g_1-g_2 %\zeta}}\right). 
%\end{aligned}
%\]
%Note that $\lambda_p(\zeta)$, $\lambda_m(\zeta)$ and $0$ are eigenvalues of the linearized matrix of \eqref{eq:numerics} around $(\zeta,0,0)$. 
and the eigenvectors that correspond to $\lambda^{\pm}(\zeta)$ are
$$ \boldsymbol{a}^{\pm}(\zeta)=\,^t \left( 1, \frac{\lambda^{\pm}(\zeta)}{A_{12}(\zeta)}, \frac{\lambda^{\pm}(\zeta)^2}{A_{12}(\zeta)} \right).$$
%and 
%$\boldsymbol{a}_m(\zeta)=\,^t(1, \lambda_m(\zeta)/A_{12}(\zeta), \lambda_m(\zeta)^2/A_{12}(\zeta))$, where 
%$A_{12}(\zeta)=\beta\zeta/(c(g_1-g_2\zeta))$. 
Here, we note that, for any admissible pair $(c,U_{final})$, we should have $\lambda^{-}(U_{final})<0<\lambda^{+}(U_{final})$ and $\lambda^{-}(U_{initial})<\lambda^{+}(U_{initial})<0$. Thus $(U_{final},0,0)$ possesses a one-dimensional unstable manifold, and 
$(U_{initial},0,0)$ possesses a two-dimensional stable manifold.

We then use a shooting method to approximate traveling wave solutions.
Theoretically, we know that a traveling wave exists for each $c \geq c^*$; thus, $c$ is typically fixed during the shooting procedure, and only the parameter $\zeta$ corresponding to $U_{final}$ is varied.
The procedure is as follows:
\begin{enumerate}
    \item Fix $c$ and choose $\zeta$ appropriately, where $\zeta$ should be close to $U_{final}$.
    \item Compute a trajectory starting from $\boldsymbol{E}(\zeta) + \epsilon \boldsymbol{a}^{+}(\zeta)$ with sufficiently small $\epsilon > 0$, where $\boldsymbol{E}(\zeta) = {}^t(\zeta, 0, 0)$.
    The vector $\boldsymbol{E}(\zeta) + \epsilon \boldsymbol{a}^{+}(\zeta)$ is tangent to the one-dimensional unstable manifold.
    \item Check the arrival point $(\xi, 0, 0)$ of the trajectory, since it belongs to a one-parameter family of equilibria that possesses a two-dimensional stable manifold. 
    \item If $\xi$ is sufficiently close to $U_{initial}$, for example, when $|\xi - U_{initial}| < 10^{-6}$, the computation is terminated. At this point, an approximate traveling wave $(u, v, c, U_{final})$ is obtained, where $U_{final}=\zeta$.
    Otherwise, adjust $\zeta$ slightly and repeat steps~2--4.
\end{enumerate}
On the other hand, if there is no $\xi$ satisfying $|\xi - U_{initial}| < 10^{-6}$ in step~4, we conclude that no traveling wave exists for that value of $c$.
Additionally, by varying $c$ in small increments, we determine the boundary between the existence and nonexistence of traveling wave solutions, which corresponds to $c^*$. \\

Next, we consider traveling wave solutions of \eqref{eq:cross_sys_epsilon} with $\varepsilon>0$. 
In this case, we have not yet proved the existence of traveling wave solutions, thus we instead investigate it numerically.
To do that, we use the same parameter values as before except for $\varepsilon$. 
When $\varepsilon=0$, system~\eqref{eq:cross_sys_epsilon} obviously reduces to~\eqref{eq:cross_sys_zero}. 
Figure~\ref{fig:4} represents the existence range, with respect to parameters $U_{initial}$ and~$c$, of traveling wave solutions for \eqref{eq:cross_sys_epsilon} with $\varepsilon=1.0$, obtained via numerical computations.
\begin{figure}[htbp]
    \centering
    \includegraphics[width=80mm]{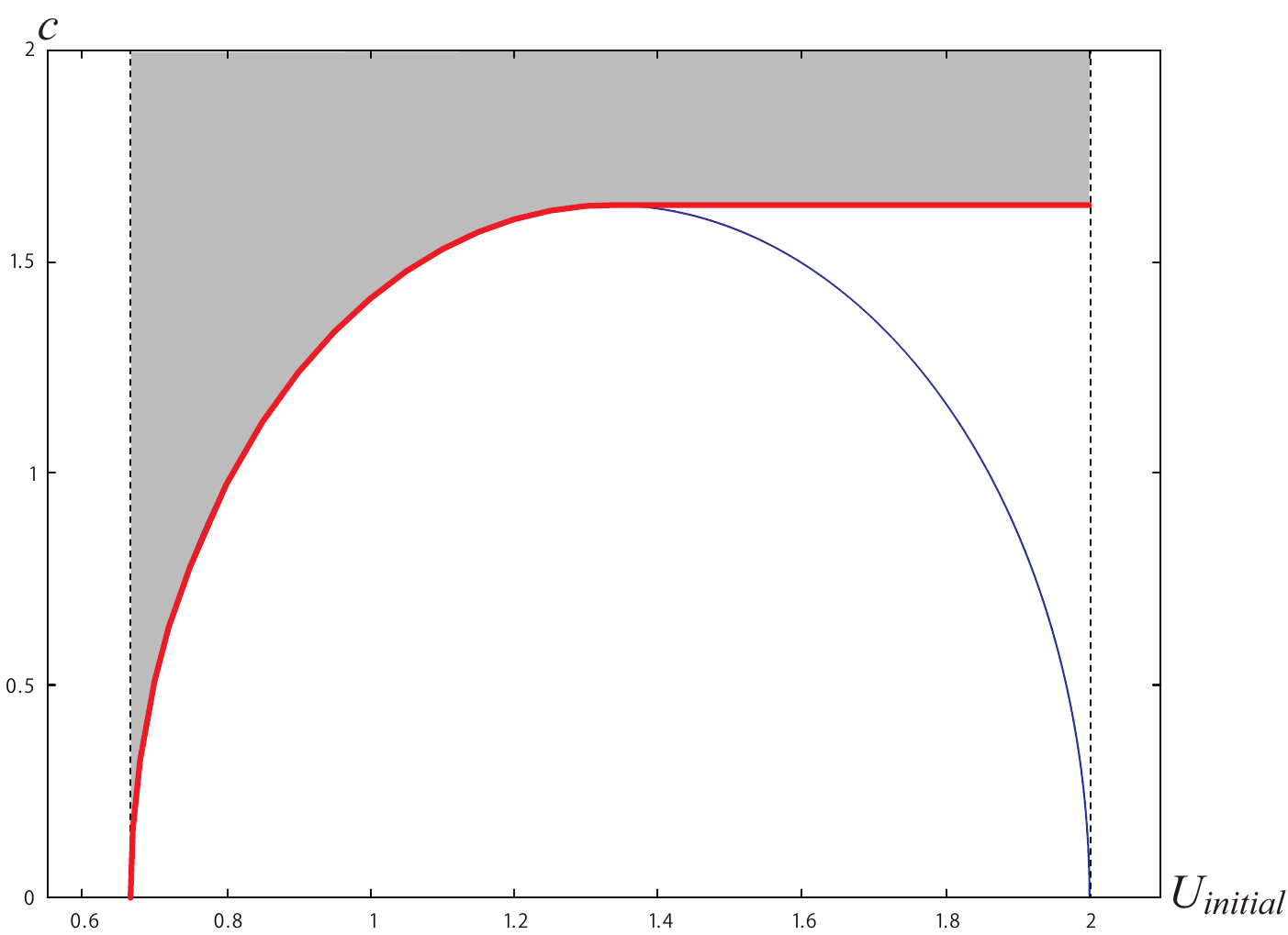}
    \caption{Existence region of traveling wave solutions when $\delta=1.0$, $\gamma=1.0$, $\beta=1.5$, $g_1=2.0$, $g_2=1.0$ and $\varepsilon=1.0$, so that $\frac{1}{2} \left( \frac{g_1}{g_2} + \frac{\delta}{\gamma \beta} \right) = \frac{4}{3}$.
    The horizontal axis and the vertical axis respectively represent $U_{initial}$ and $c$. The red curve means minimum wave speed $c=c^*(U_{initial})$. There exist traveling wave solutions in the gray region. }
    \label{fig:4}
\end{figure}
Comparing Figures~\ref{fig:2} with \ref{fig:4}, they appear to be identical. This means that the existence regions of traveling wave solutions coincide and that the minimal wave speed does not depend on~$\varepsilon$. This suggests that the diffusion rate in the equation for $u$ has no effect on the propagation speed of solutions.
While Figure~\ref{fig:4} corresponds to the single value~$\varepsilon=1.0$, we numerically confirmed that it does not change when $\varepsilon$ varies.

On the other hand, the profile of traveling wave solutions changes according to the value of $\varepsilon$ as shown in Figure~\ref{fig:5}. 
\begin{figure}[htbp]
    \begin{center}
    \begin{tabular}{cc}
    \includegraphics[width=60mm]{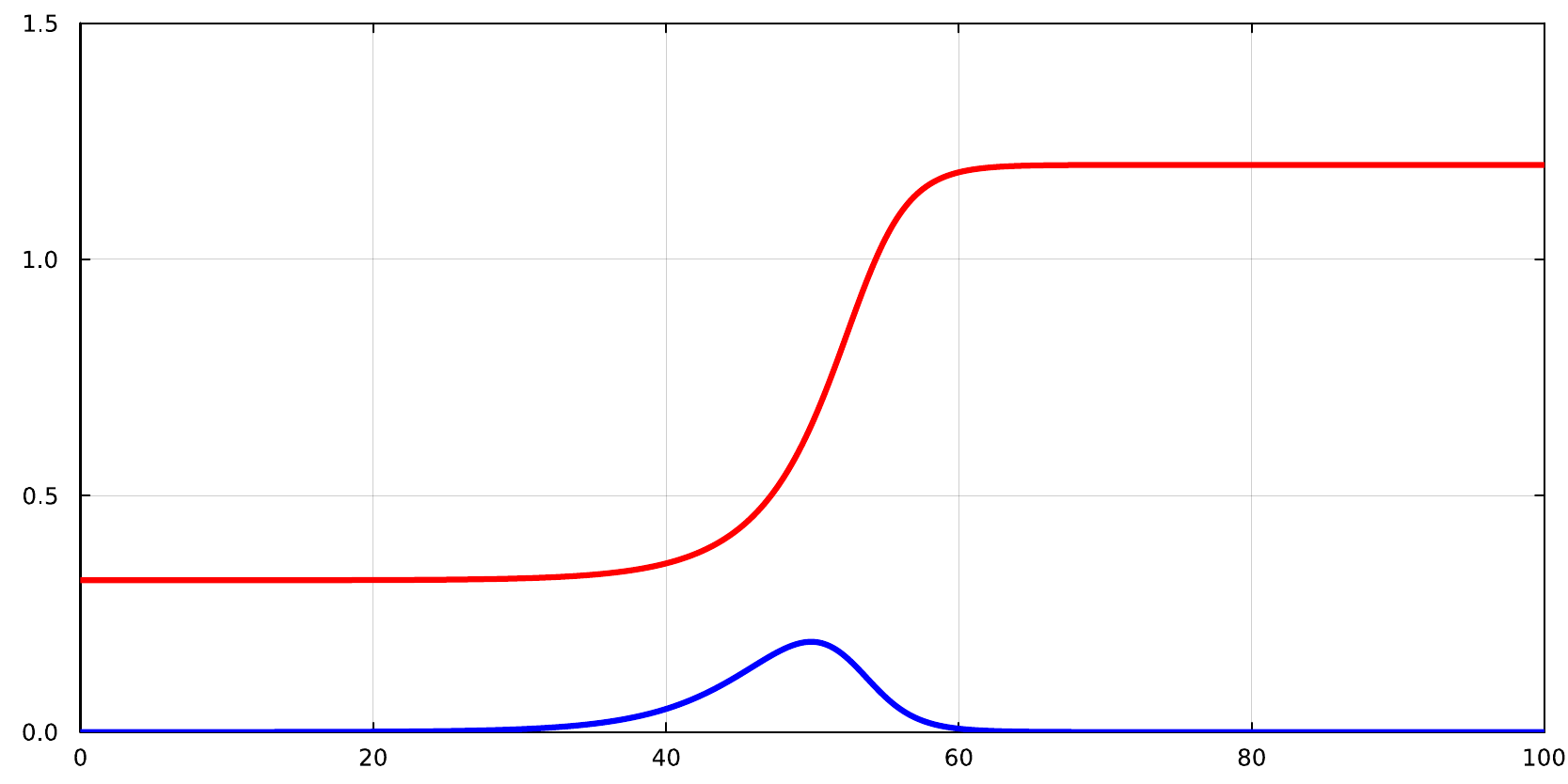}&
    \includegraphics[width=60mm]{d_001.pdf}\\
    $\varepsilon=0$ & $\varepsilon=0.01$\\
    \includegraphics[width=60mm]{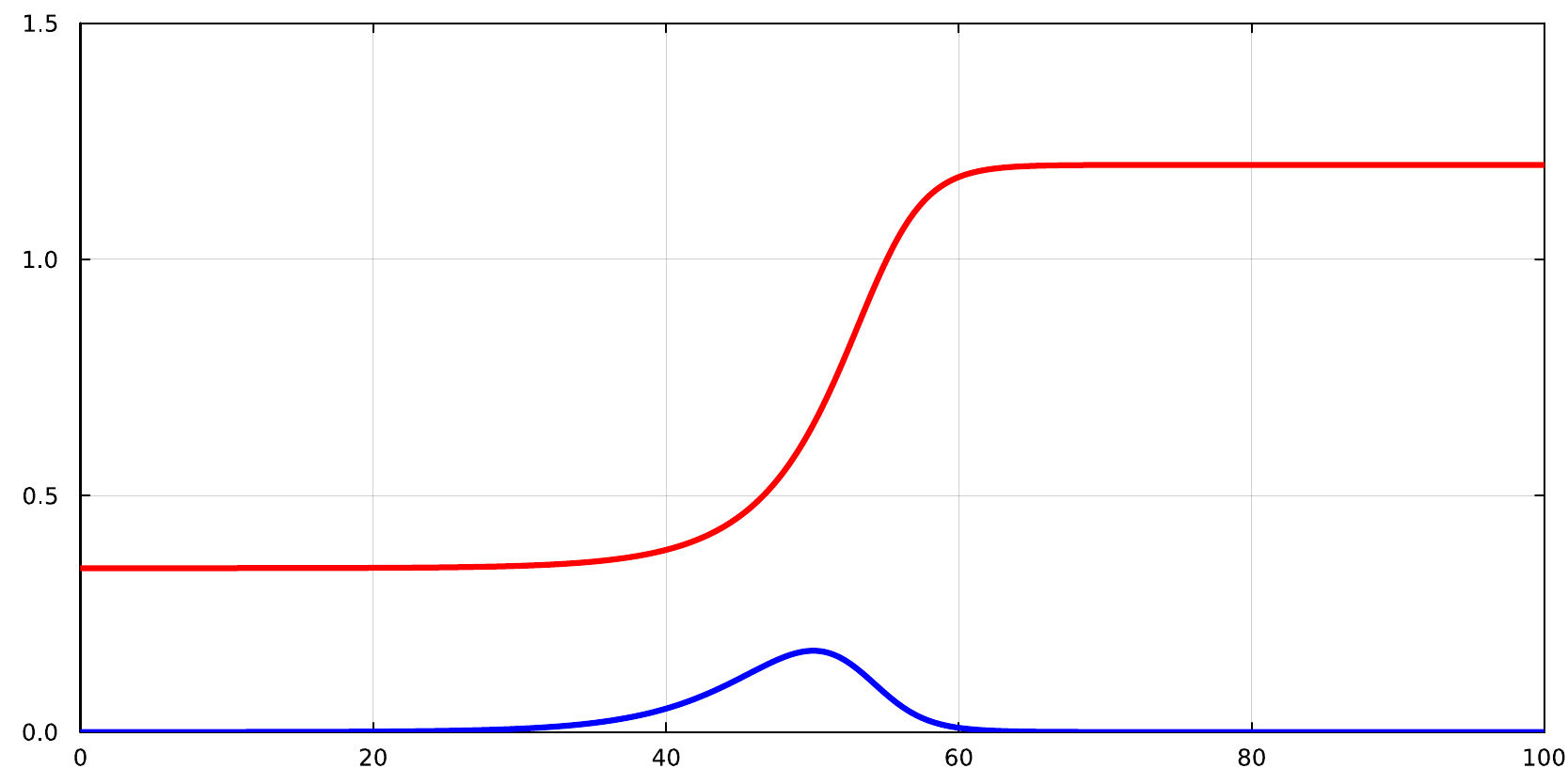}&
    \includegraphics[width=60mm]{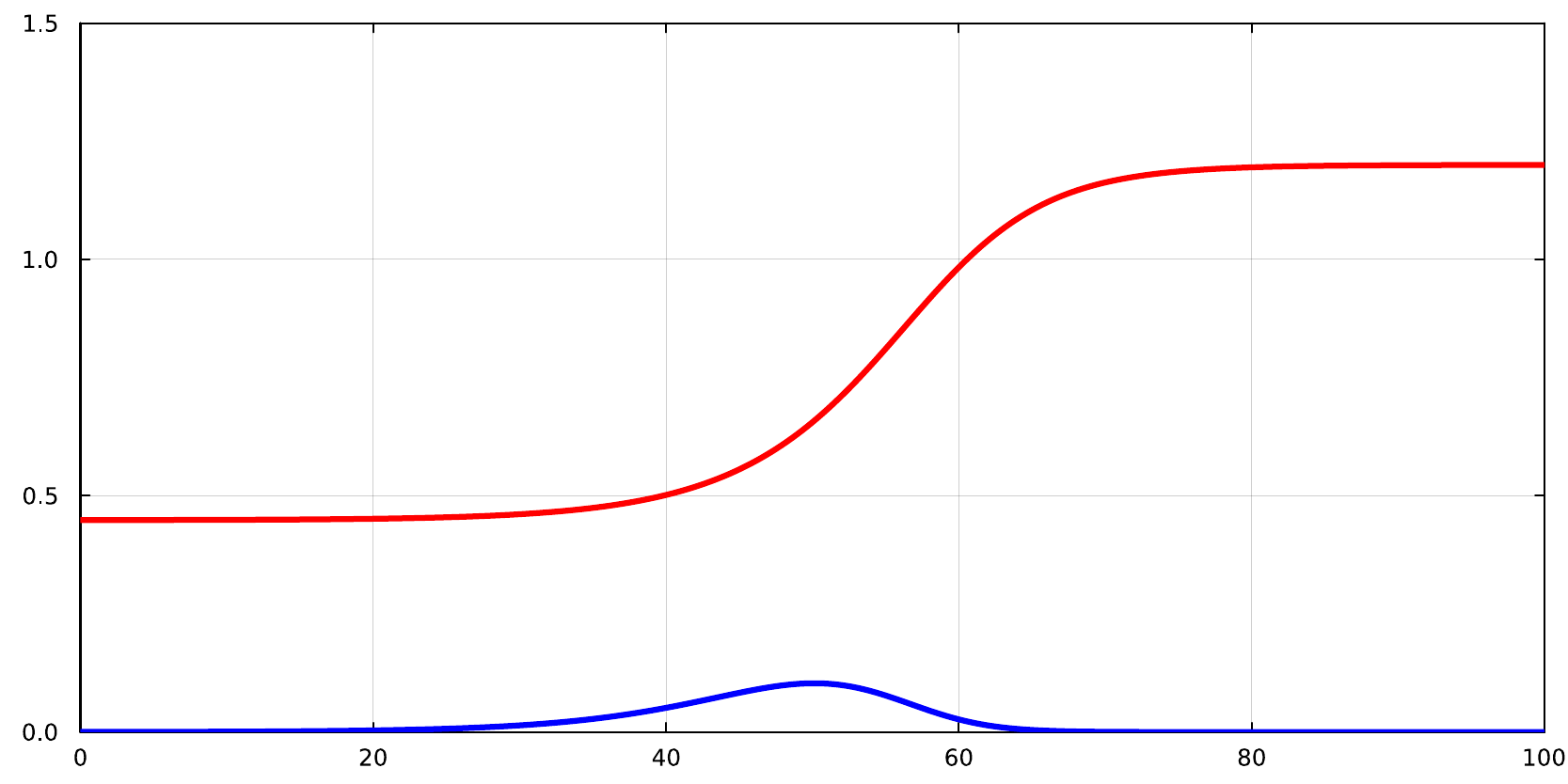}\\
    $\varepsilon=1$ & $\varepsilon=10$
    \end{tabular}
    \end{center}
    \caption{Profiles of traveling wave solutions when $\varepsilon$ varies. The parameter values are $c=2$ and $U_{initial}=1.2$, and the others are the same as those in Figure~\ref{fig:4}.}
    \label{fig:5}
\end{figure}
These figures employ the same parameter values including the speed $c=2.0$. 
As the value of $\varepsilon$ increases, the profile of the traveling wave solution becomes less steep due to the additional diffusion effect in the equation for~$u$. 
Along with this property, we can see that the value of $U_{final}$ increases. 
Figure~\ref{fig:6} illustrates the relation between the right and left limits~$U_{initial}$ and~$U_{final}$ of traveling wave solutions with the minimum wave speed for different values of $\varepsilon$. 
\begin{figure}[htbp]
    \centering
    \includegraphics[width=80mm]{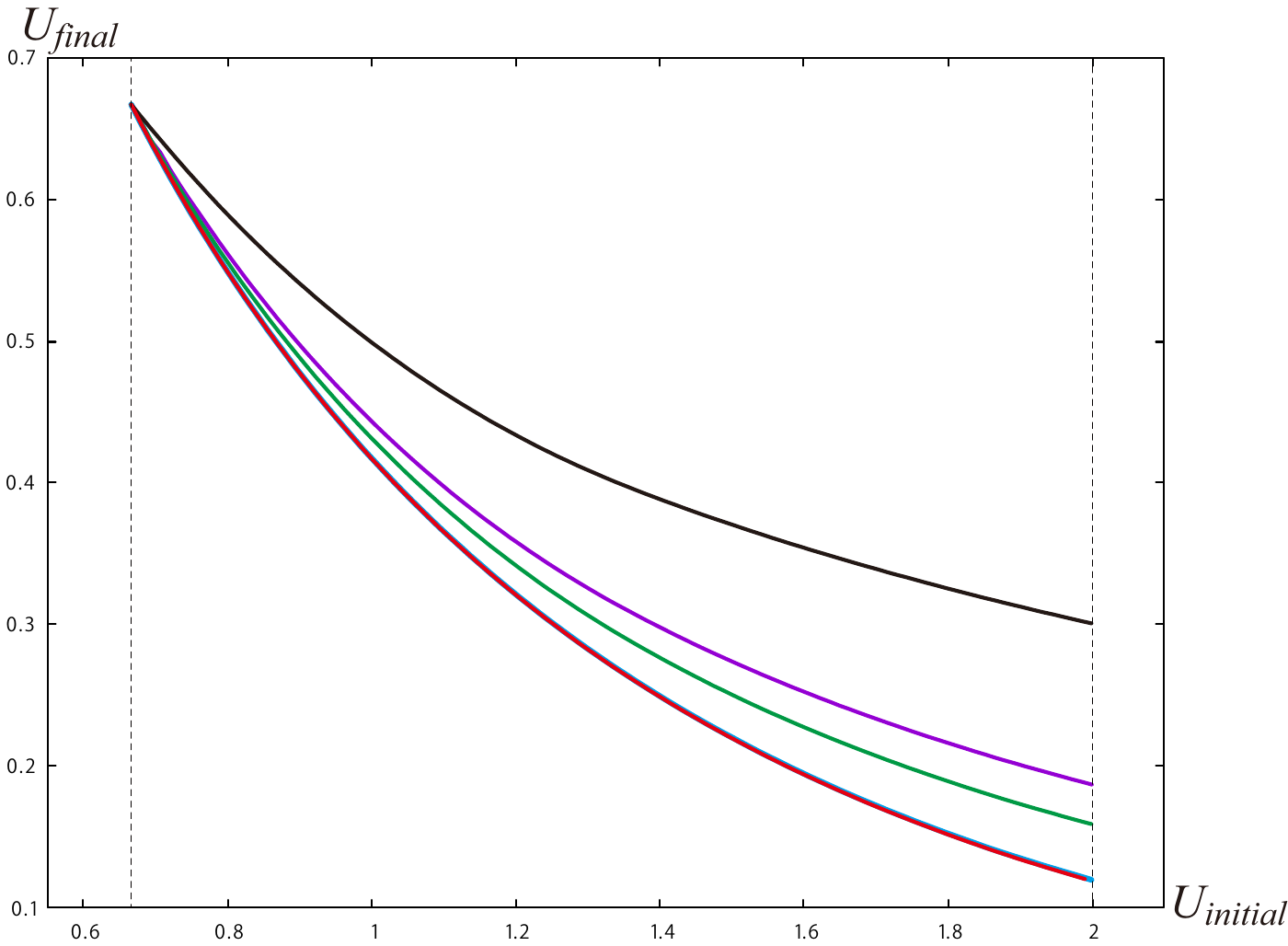}
    \caption{The relation between $U_{initial}$ and $U_{final}$ for traveling wave solutions with the minimum wave speed when the value of $\varepsilon$ varies. The colors correspond as follows: (red)$\varepsilon=0$, (blue)$\varepsilon=0.01$, (green)$\varepsilon=0.5$, (purple)$\varepsilon=1$ and (black)$\varepsilon=5$.}
    \label{fig:6}
\end{figure}
For each $\varepsilon$, the graph is monotonically decreasing with respect to~$U_{initial}$. 
Particularly, though it looks like the graphs for $\varepsilon=0$ and for $\varepsilon=0.01$ overlap, 
the curve for $\varepsilon=0.01$ is located slightly above the one for $\varepsilon=0$. This also suggests that everything is continuous with respect to the parameter~$\varepsilon$.\smallskip

To sum up, it seems that the final density~$U_{final}$ of susceptible/prey after the traveling wave has passed is decreasing with respect to the initial density~$U_{initial}$, regardless of whether this species diffuses or not. On the other hand, $U_{final}$ is increasing with respect to the diffusion rate~$\varepsilon$ of~$u$. This suggests that a higher motility rate is beneficial to the susceptible/prey~$u$, even though there is no coupling there, which means that~$u$ moves randomly and independently of the presence or not of infected/predator.

Finally, in the case of \eqref{eq:cross_sys_zero}, we have shown that~$U_{final}$ is independent of the wave speed~$c$. That is, when the parameter values are fixed except for $c$, the left limit $U_{final}$ is the same for all $c\geq c^*(U_{initial})$. However, the situation changes in the case of \eqref{eq:cross_sys_epsilon}. 
Figure~\ref{fig:7} shows the values of $U_{final}$ for several $\varepsilon$ when the wave speed $c$ varies. 
\begin{figure}[htbp]
    \centering
    \includegraphics[width=80mm]{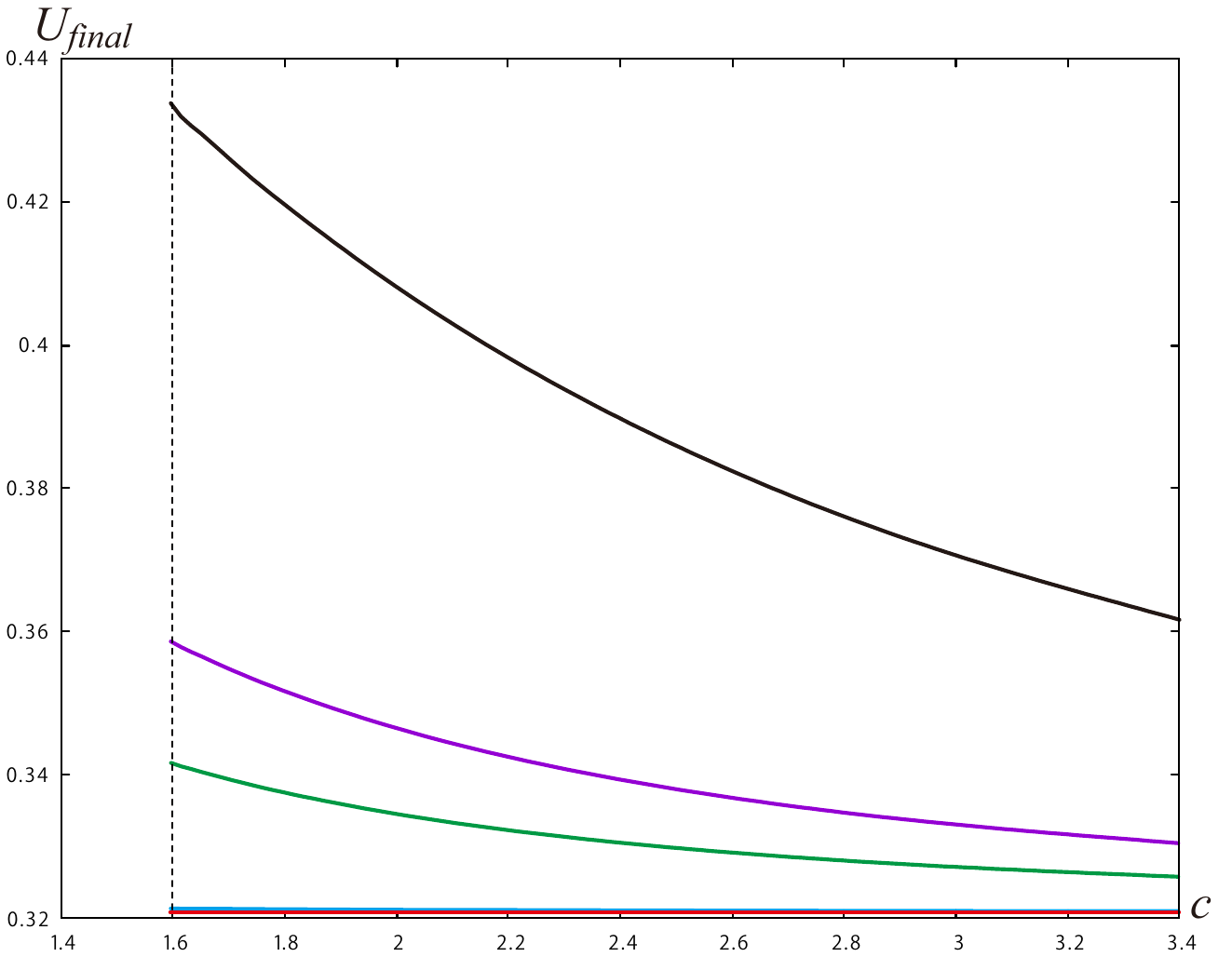}
    \caption{Dependence of $U_{final}$ on the wave speed $c$ for several $\varepsilon$ when $U_{initial}=1.2$. The colors correspond as follows: (red)$\varepsilon=0$, (blue)$\varepsilon=0.01$, (green)$\varepsilon=0.5$, (purple)$\varepsilon=1$ and (black)$\varepsilon=5$.}
    \label{fig:7}
\end{figure}
For $\varepsilon=0$, the same final density~$U_{final}$ appears for all~$c$, consistently with Theorem~\ref{th:main_tw2}$(i)$ (see also Figure~\ref{fig:3}). 
When $\varepsilon>0$, numerical computations suggest that $U_{final}$ is monotonically decreasing in $c$. This last observation is an important distinction between systems~\eqref{eq:cross_sys_epsilon} and~\eqref{eq:cross_sys_zero}.

\subsubsection{On different forms of the function~$g(u)$}

As far as system~\eqref{eq:cross_sys_zero} is concerned, we proved the existence of traveling waves for large choices of the function~$g$. Thus it seems natural to investigate, at least numerically, how the propagation dynamics depend on this choice, in both cases of systems~\eqref{eq:cross_sys_zero} and~\eqref{eq:cross_sys_epsilon}.

We first choose the following function form as a different $g(u)$: 
\[
g(u)=2\left( e^{-\frac{g_1}{g_2}u}-e^{-\left(\frac{g_1}{g_2}\right)^2}\right), 
\]
where we set $g_1=2$ and $g_2=1$. 
This function is convex and decreasing. 
Moreover, the same condition as before, that is $\frac{\delta}{\gamma \beta}<U_{initial}<\frac{g_1}{g_2}$, may ensure the existence of traveling wave solutions.
%In this case, we numerically investigate how the structures of traveling wave solutions of \eqref{eq:cross_sys_zero} and \eqref{eq:cross_sys_epsilon} vary compered with the case of $g(u)=g_1-g_2 u$. 

Figure~\ref{fig:8} shows the minimum wave speeds of traveling wave solutions of \eqref{eq:cross_sys_epsilon} with $\varepsilon=0$, $0.1$ and $1$ when $U_{initial}$ varies. 
\begin{figure}[htbp]
    \centering
    \includegraphics[width=80mm]{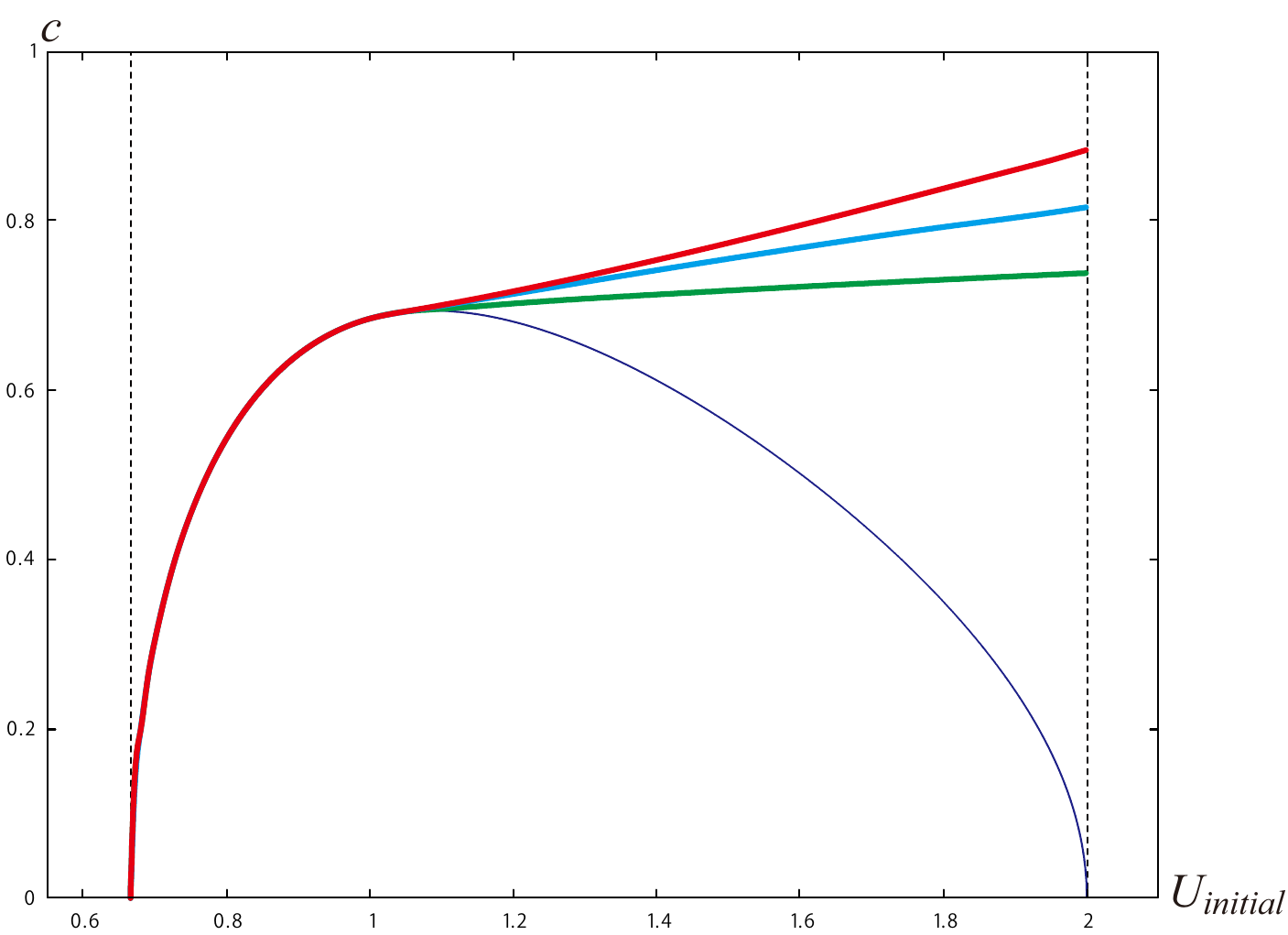}
    \caption{Minimum wave speed of traveling wave solutions of \eqref{eq:cross_sys_zero} and \eqref{eq:cross_sys_epsilon} when  $g(u)=2(e^{-2u}-e^{-4})$. The colors correspond as follows: (red)$\varepsilon=0$, (blue)$\varepsilon=0.1$, (green)$\varepsilon=1$. }
    \label{fig:8}
\end{figure}
One can see from this figure that the minimum wave speed appears not to change with respect to $\varepsilon$ for smaller values of $U_{initial}$, that is up to around~$U_{initial}=1$. Incidentally, the curves of the minimum wave speeds coincide with the curve of the linear speed $c^*_{lin}=2\sqrt{g(U_{initial})h(U_{initial})}$ (which we recall is independent of~$\varepsilon$, and is also plotted with thin dark blue on Figure~\ref{fig:8}) on the same range of values for~$U_{initial}$. On the other hand, the minimum wave speed is strictly larger than the linear speed $c_{lin}^*$, and also it is decreasing in~$\varepsilon$, if $U_{initial}$ is roughly larger than~$1$. 
This contrasts with the previous case of $g(u)=g_1-g_2 u$, where the minimum wave speed did not change with respect to $\varepsilon$ regardless of~$U_{initial}$ (see Figures~\ref{fig:2} and \ref{fig:4}). Still, for both those choices of the function~$g$, the minimum wave speed is monotonically nondecreasing in $U_{initial}$. 
Furthermore, we numerically confirm that traveling wave solutions exist in the region above each curve of minimum wave speed in Figure~\ref{fig:8}. 

\begin{figure}[htbp]
    \centering
    \begin{tabular}{c c}
    \includegraphics[width=68mm]{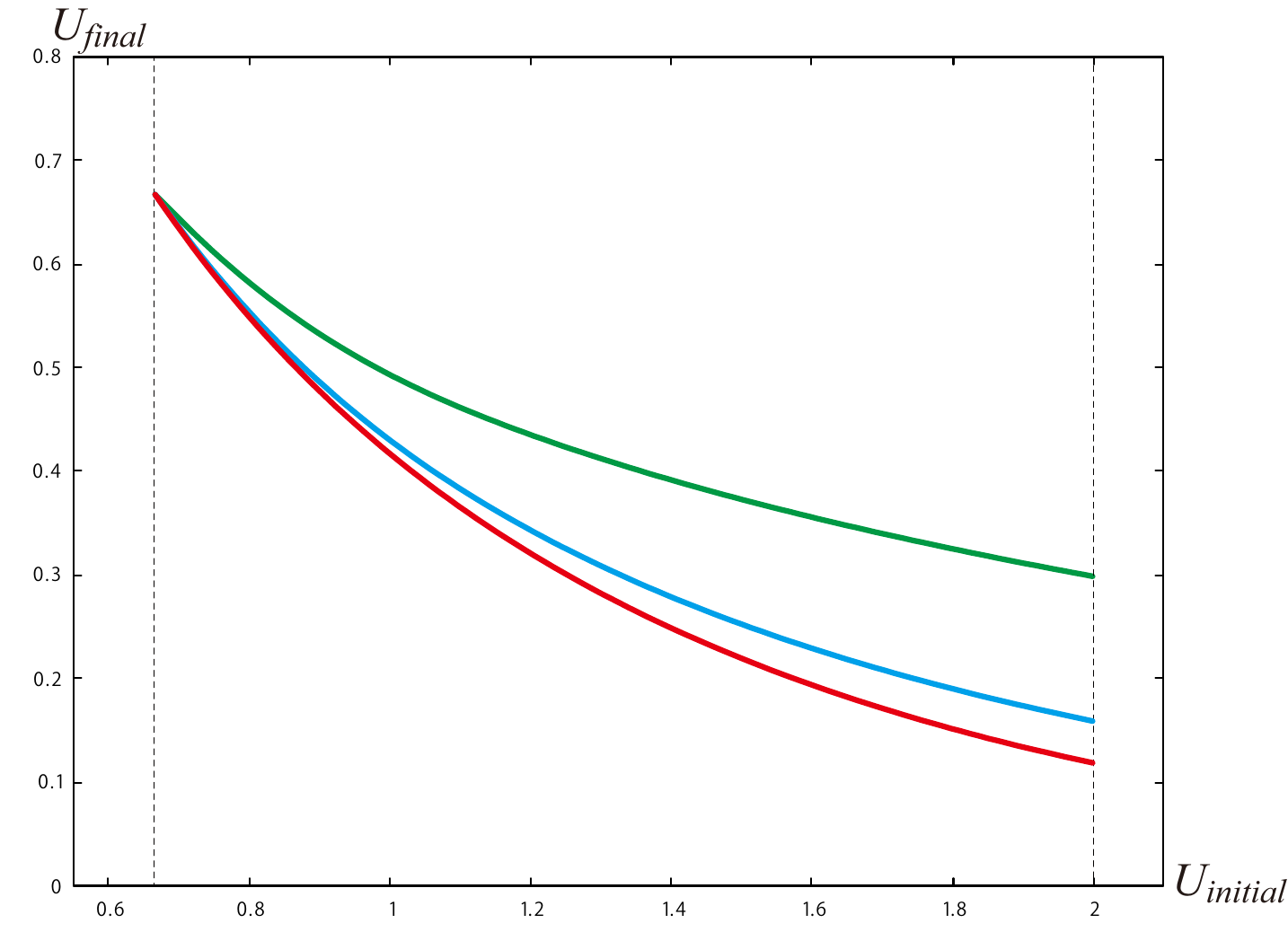}&
    \includegraphics[width=64mm]{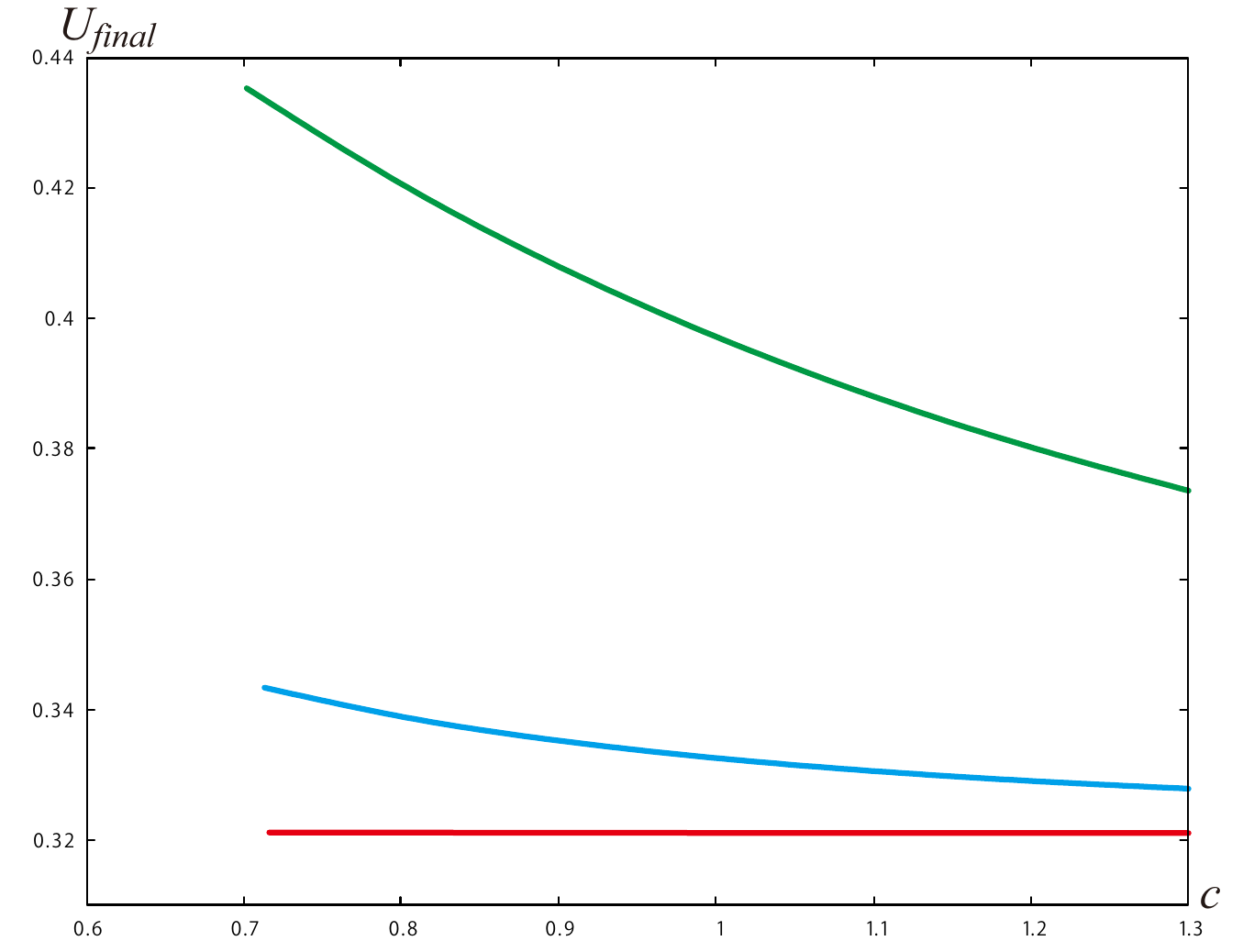}\\
    (a) & (b) 
    \end{tabular}
    \caption{
    (a) A relation between $U_{initial}$ and $U_{final}$ for traveling wave solutions with minimum wave speed. 
    (b) A relation between $c$ and $U_{final}$ when $U_{initial}=1.2$. 
    The colors correspond as follows: (red)$\varepsilon=0$, (blue)$\varepsilon=0.1$, (green)$\varepsilon=1$.}
    \label{fig:9}
\end{figure}
Next, we discuss the behavior of $U_{final}$. 
Figure~\ref{fig:9} plots relations between~$U_{initial}$ and~$U_{final}$, and between $c$ and $U_{final}$, for $\varepsilon=0$, $0.1$ and $1$. 
Figure~\ref{fig:9}(a) illustrates that~$U_{final}$ is monotonically decreasing with respect to $U_{initial}$, and Figure~\ref{fig:9}(b) shows that~$U_{final}$ has the same value for any $c$ when $\varepsilon=0$, but $U_{final}$ declines according to $c$ when $\varepsilon>0$. 
These properties are similar to the case $g(u)=g_1-g_2 u$ (see Figures~\ref{fig:6} and~\ref{fig:7}). 
Note that the left edge of each curve in Figure~\ref{fig:9}(b) corresponds to the traveling wave solution with minimum wave speed.\smallskip 

Finally, we set the function~$g$ as
\[
g(u)=\frac 1 3\left( e^{g_1/g_2}-e^u \right), 
\]
which is concave and decreasing. 
\begin{figure}[htbp]
    \centering
    \includegraphics[width=80mm]{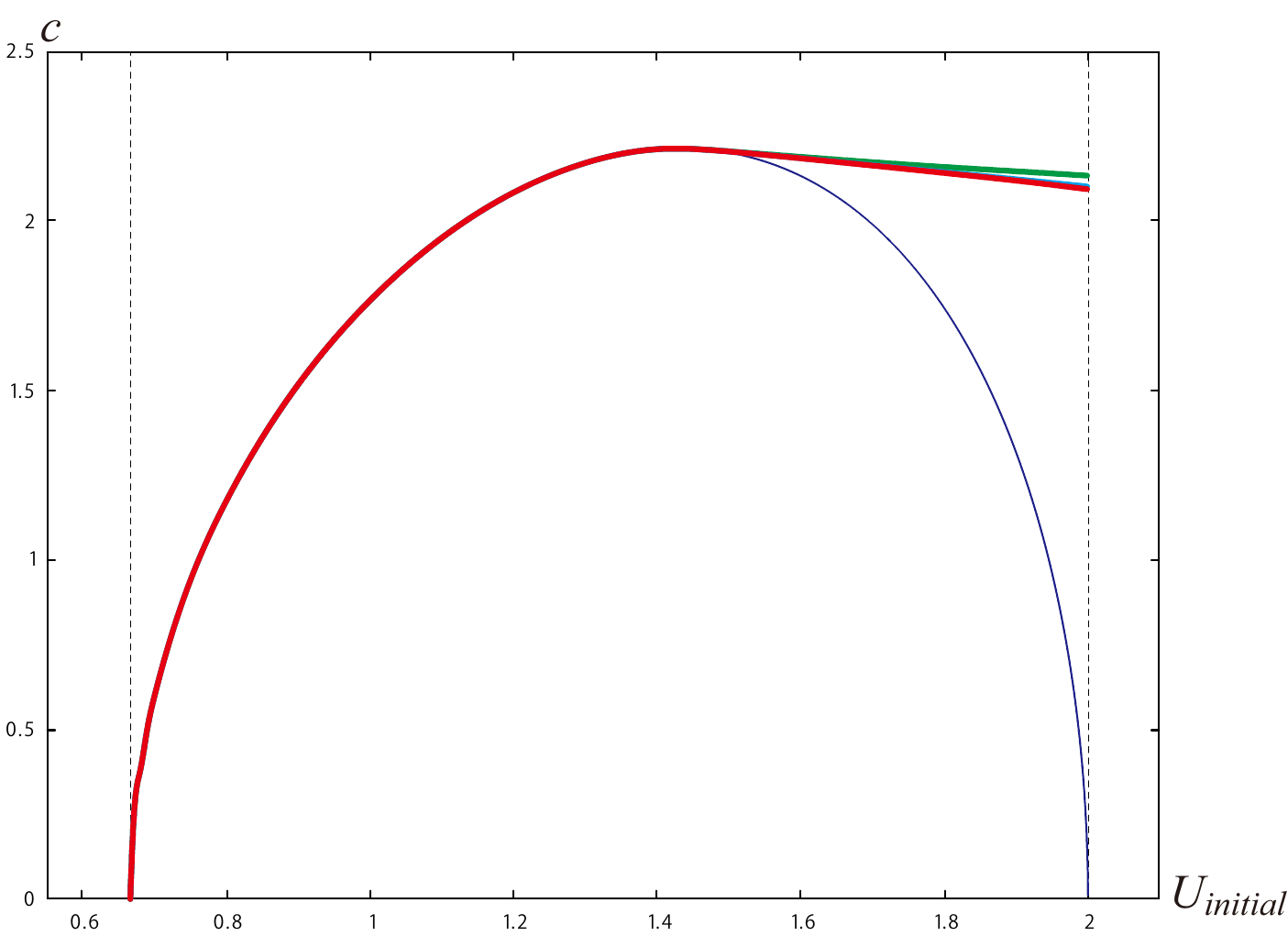}
    \caption{Minimum wave speed of traveling wave solutions of \eqref{eq:cross_sys_zero} and \eqref{eq:cross_sys_epsilon} when  $g(u)=\left( e^{g_1/g_2}-e^u \right)/3$. The colors correspond as follows: (red)$\varepsilon=0$, (blue)$\varepsilon=0.1$, (green)$\varepsilon=1$. }
    \label{fig:10}
\end{figure}
In this case, Figure~\ref{fig:10} shows the relation between~$U_{initial}$ and the minimum wave speed~$c$. 
Unlike in the previous cases (see Figures~\ref{fig:2}, \ref{fig:4} and \ref{fig:8}), the minimum wave speed $c$ possesses a peak around $U_{initial}=1.4$. On the left of this peak, the curves coincide with the graph of the linear speed $c_{lin}^*=2\sqrt{g(U_{initial})h(U_{initial})}$, and in particular the minimum wave speed does not depend on $\varepsilon$. On the other hand, one can see from Figure~\ref{fig:10} that for a fixed $U_{initial}$ on the right of the peak, the minimum wave speed increases as the value of $\varepsilon$ becomes larger. 
This tendency is opposite to the case $g(u)=2(e^{-g_1 u/g_2}-e^{-\left(g_1/g_2\right)^2})$ (see Figure~\ref{fig:8}), which appears to come from the different concavity/convexity properties of both these forms of~$g(u)$. Moreover, we again checked numerically that traveling wave solutions exist in the region above each curve. \smallskip

To conclude, we have discussed here the structure of traveling wave solutions for three types of $g(u)$. 
The first case is $g(u)=g_1-g_2 u$ which is a linear decreasing function, the second is $g(u)=2(e^{-g_1 u/g_2}-e^{-\left(g_1/g_2\right)^2})$ which is convex and decreasing, and the third one is $g(u)=\frac 1 3\left( e^{g_1/g_2}-e^u \right)$ which is concave and decreasing.
Those three cases reveal that the structure of traveling wave solutions may differ depending on the choice of the function~$g$. From these numerical results, we expect that the minimum traveling wave speed of~\eqref{eq:cross_sys_zero} is faster than that of~\eqref{eq:cross_sys_epsilon} if $g$ is convex and decreasing. 
On the other hand, 
the minimum traveling wave speed of \eqref{eq:cross_sys_zero} is slower if $g$ is concave and decreasing.
In the critical case when $g$ is affine, the minimum traveling wave speeds of \eqref{eq:cross_sys_zero} and \eqref{eq:cross_sys_epsilon} coincide. 
These are our conjectures on minimum traveling wave speeds of \eqref{eq:cross_sys_zero} and \eqref{eq:cross_sys_epsilon}.

\section*{Acknowledgments}
T. G. was supported by the Agence Nationale de la Recherche projects Indyana (ANR-21-CE40-0008) and Reach (ANR-23-CE40-0023-01). H. I. was supported by JSPS KAKENHI Grant Number 25K07120. H. M. was supported by JSPS KAKENHI Grant Number 23K03216.
The three authors acknowledge the support of CNRS through the International Research Network ReaDiNet.

\section*{Data availability} Data sharing does not apply to this article because no datasets were generated or analyzed during the current study.

\section*{Conflict of interest} The authors declare that they have no Conflict of interest. All authors satisfy ethical responsibilities.

\bibliographystyle{plain}
\bibliography{biblio}

%\printbibliography%Prints bibliography

\end{document}